\newtheorem{thm}{Theorem}
\newtheorem{prop}{Proposition}
\newtheorem{lem}{Lemma}
\newtheorem{rem}{Remark}
\newtheorem{defi}{Definition}
\def\Top{\mathop{\rm Top}\nolimits}
\def\RTop{\mathop{\rm RTop}\nolimits}
\def\Sch{\mathop{\rm Sch}\nolimits}
\def\Pic{\mathop{\rm Pic}\nolimits}
\def\dim{\mathop{\rm dim}\nolimits}
\def\Tr{\mathop{\rm Tr}\nolimits}
\def\Cor{\mathop{\rm Cor}\nolimits}
\def\SmVar{\mathop{\rm SmVar}\nolimits}
\def\PSmVar{\mathop{\rm PSmVar}\nolimits}
\def\Var{\mathop{\rm Var}\nolimits}
\def\Hom{\mathop{\rm Hom}\nolimits}
\def\Spec{\mathop{\rm Spec}\nolimits}
\def\supp{\mathop{\rm supp}\nolimits}
\def\QPVar{\mathop{\rm QPVar}\nolimits}
\def\AnSp{\mathop{\rm AnSp}\nolimits}
\def\CW{\mathop{\rm CW}\nolimits}
\def\PVar{\mathop{\rm PVar}\nolimits}
\def\sing{\mathop{\rm sing}\nolimits}
\def\diff{\mathop{\rm diff}\nolimits}
\def\Cone{\mathop{\rm Cone}\nolimits}
\def\ad{\mathop{\rm ad}\nolimits}
\def\log{\mathop{\rm log}\nolimits}
\def\Diff{\mathop{\rm Diff}\nolimits}
\def\An{\mathop{\rm An}\nolimits}
\def\Ho{\mathop{\rm Ho}\nolimits}
\def\PSh{\mathop{\rm PSh}\nolimits}
\def\AnSm{\mathop{\rm AnSm}\nolimits}
\def\Tr{\mathop{\rm Tr}\nolimits}
\def\DA{\mathop{\rm DA}\nolimits}
\def\Fun{\mathop{\rm Fun}\nolimits}
\def\Bti{\mathop{\rm Bti}\nolimits}
\def\coker{\mathop{\rm coker}\nolimits}
\def\Cat{\mathop{\rm Cat}\nolimits}
\def\RCat{\mathop{\rm RCat}\nolimits}
\def\an{\mathop{\rm an}\nolimits}
\def\Shv{\mathop{\rm Shv}\nolimits}
\def\Proj{\mathop{\rm Proj}\nolimits}
\def\PSch{\mathop{\rm PSch}\nolimits}
\def\dar[#1]{\ar@<2pt>[#1]\ar@<-2pt>[#1]}
\title{Hodge conjecture for projective hypersurfaces}
\author{Johann Bouali}
\begin{document}

\maketitle

\begin{abstract}
We show that a Hodge class of a complex smooth projective hypersurface is an analytic logarithmic De Rham class.
On the other hand we show that for a complex smooth projective variety 
an analytic logarithmic De Rham class of type $(d,d)$ is the class of a codimension $d$ algebraic cycle. 
We deduce the Hodge conjecture for smooth projective hypersurfaces.
\end{abstract}

\tableofcontents

\section{Introduction}

\subsection{Complex analytic De Rham logarithmic classes}

In \cite{B7}, we introduced, for $X$ an algebraic variety over a field,
the notion of logarithmic De Rham cohomology classes.
For $X$ a complex algebraic variety, we also introduce in this work the notion of 
logarithmic analytic De Rham cohomology classes. 
The notion of log forms was introduced by Bloch and Illusie 
in the context of the De Rham-Witt complex of algebraic varieties in positive characteristics. 
It is different from the notion of forms with logarithmic poles as introduced by Deligne in Hodge II theory.
In proposition \ref{cGAGAlog} (section 4)
we give an analytic analogue of \cite{B7} theorem 2 (ii), that is if $X$ is a smooth complex projective variety of dimension $d_X$, 
\begin{itemize}
\item a logarithmic analytic De Rham cohomology class of type $(d,d)$ with $2d\geq d_X$ is the class of a codimension $d$ algebraic cycle.
\item we have $H^jOL_{X^{an},0}(H_{usu}^{j-l}(X^{an},\Omega^l_{X^{an},\log,0}))=0$ for $j,l\in\mathbb Z$ such that $j<2l$, $j\geq d_X$.
\end{itemize}
Here 
\begin{equation*}
OL_{X^{an},0}:\Omega^l_{X^{an},\log,0}[-l]:=(\wedge^lO_{X^{an}}^*)\otimes\mathbb Q[-l]\to\Omega^\bullet_{X^{an}} 
\end{equation*}
is the morphism of the abstract abelian group generated by logarithmic analytic $l$-forms (see definition \ref{omegazero})
to the analytic de Rham complex and $usu$ denote the usual complex topology.  
The proof of proposition \ref{cGAGAlog} is motivic and similar to the proof of \cite{B7} theorem 2 :
a logarithmic analytic class of bidegree $(p,q)$, $p+q\geq d_X$
which comes from $X$ is acyclic for the Zariski topology on each open subset
$U\subset X$ such that there exists an etale map $e:U\to\mathbb A_{\mathbb C}^{d_U}$ 
such that $e:U\to e(U)$ is finite etale and 
$\mathbb A_{\mathbb C}^{d_U}\backslash e(U)\subset\mathbb A_{\mathbb C}^{d_U}$ is a divisor (proposition \ref{UXusu}), 
this allows us to proceed by a finite induction 
using the crucial fact that the purity isomorphism for De Rham cohomology preserve logarithmic analytic classes
since the purity isomorphism is motivic (c.f. \cite{CD}, see proposition \ref{smXZ}).

\subsection{Hodge conjecture for smooth projective hypersurfaces}

Let $X=V(f)\subset\mathbb P^N_{\mathbb C}$ be a smooth projective hypersurface. 
Denote $j:U:=\mathbb P^N_{\mathbb C}\backslash X\hookrightarrow\mathbb P^N_{\mathbb C}$ the open complementary subset.
Let  $(\Omega^{\bullet}_{\mathbb P^N}\otimes_{O_{\mathbb P^N}}F^{\bullet-p}j_*O_U)^{an}$ 
be the filtered De Rham complex of $U$. The cone of $\iota_{\partial}$ : 
\begin{equation*}
\iota_{\partial}:\Omega^{N-p}_{\mathbb P^{N,an}}(\log X)^{\partial=0}[-p]\hookrightarrow
(\Omega^{\bullet}_{\mathbb P^N}\otimes_{O_{\mathbb P^N}}F^{\bullet-p}j_*O_U)^{an}
\end{equation*}
is locally acyclic for the complex topology, more precisely is acyclic on open balls (see e.g.\cite{Voisin}). 
As a consequence, we show in proposition \ref{HUQ}, 
using an open cover $\mathbb P^N_{\mathbb C}=\cup_{i=1}^n B_i$ by open balls $\phi_i:B_i\simeq D(0,1)^N$
such that $\phi_i(B_i\cap X)=\left\{0\right\}\times D(0,1)^{N-1}$, 
that a Hodge class $\alpha\in F^{N-p}H^N(U^{an},\mathbb Q)$ is logarithmic, that is 
\begin{equation*}
\alpha\in H^NOL_{U^{an},0}(H_{usu}^p(U^{an},\Omega_{U^{an},\log,0}^{N-p})).
\end{equation*}
We get that, if $N=2p+1$ and $\lambda\in F^pH^{2p}(X^{an},\mathbb Q)$ is a Hodge class,  
\begin{equation*}
\lambda=Res_{X,\mathbb P^N}(\alpha)+m[H]\in H^{2p}OL_{X^{an},0}(H_{usu}^p(X^{an},\Omega_{X^{an},\log,0}^p)),
\end{equation*}
where $Res_{X,\mathbb P^N}:H^N(U^{an},\mathbb C)\xrightarrow{\sim}H^{2p}(X^{an},\mathbb C)$
is the residu map, using the fact that since the residu map is motivic (see definition \ref{ResMot}), 
it preserves the logaritmic De Rham classes. 
Proposition \ref{cGAGAlog} (ii) then implies that $\lambda=[Z]$, $Z\in\mathcal Z^p(X)$, is the class of an algebraic cycle.

To our knowledge, Hodge's conjecture for hypersurfaces was known for only a few cases : hyperplanes, 
quadrics and Fermat's hypersurfaces of degree $\leq 21$ (Shioda). 
Our new contribution which allowed us to tackle the general case of 
hypersurfaces is the introduction of the notion of analytical logarithmic de Rham classes on the one hand 
and the motivic purity isomorphism on the other hand.

I am grateful to Professor F. Mokrane for his help and support during the preparation of this work.

\section{Preliminaries and Notations}

\subsection{Notations}

\begin{itemize}

\item Denote by $\Top$ the category of topological spaces and $\RTop$ the category of ringed spaces.
\item Denote by $\Cat$ the category of small categories and $\RCat$ the category of ringed topos.
\item For $\mathcal S\in\Cat$ and $X\in\mathcal S$, we denote $\mathcal S/X\in\Cat$ the category whose
objects are $Y/X:=(Y,f)$ with $Y\in\mathcal S$ and $f:Y\to X$ is a morphism in $\mathcal S$, and whose morphisms
$\Hom((Y',f'),(Y,f))$ consists of $g:Y'\to Y$ in $\mathcal S$ such that $f\circ g=f'$.

\item For $(\mathcal S,O_S)\in\RCat$ a ringed topos, we denote by 
\begin{itemize}
\item $\PSh(\mathcal S)$ the category of presheaves of $O_S$ modules on $\mathcal S$ and
$\PSh_{O_S}(\mathcal S)$ the category of presheaves of $O_S$ modules on $\mathcal S$, 
whose objects are $\PSh_{O_S}(\mathcal S)^0:=\left\{(M,m),M\in\PSh(\mathcal S),m:M\otimes O_S\to M\right\}$,
together with the forgetful functor $o:\PSh(\mathcal S)\to \PSh_{O_S}(\mathcal S)$,
for $F\in\PSh(\mathcal S)$ and $X\in\mathcal S$ we denote $F(X):=\Gamma(X,F)$ its sections over $X$,
\item $C(\mathcal S)=C(\PSh(\mathcal S))$ and $C_{O_S}(\mathcal S)=C(\PSh_{O_S}(\mathcal S))$ 
the big abelian category of complexes of presheaves of $O_S$ modules on $\mathcal S$,
\item $C_{O_S(2)fil}(\mathcal S):=C_{(2)fil}(\PSh_{O_S}(\mathcal S))\subset C(\PSh_{O_S}(\mathcal S),F,W)$,
the big abelian category of (bi)filtered complexes of presheaves of $O_S$ modules on $\mathcal S$ 
such that the filtration is biregular and $\PSh_{O_S(2)fil}(\mathcal S):=(\PSh_{O_S}(\mathcal S),F,W)$.
\end{itemize}

\item Let $(\mathcal S,O_S)\in\RCat$ a ringed topos with topology $\tau$. For $F\in C_{O_S}(\mathcal S)$,
we denote by $k:F\to E_{\tau}(F)$ the canonical flasque resolution in $C_{O_S}(\mathcal S)$ (see \cite{B5}).
In particular for $X\in\mathcal S$, $H^*(X,E_{\tau}(F))\xrightarrow{\sim}\mathbb H_{\tau}^*(X,F)$.

\item For $f:\mathcal S'\to\mathcal S$ a morphism with $\mathcal S,\mathcal S'\in\RCat$,
endowed with topology $\tau$ and $\tau'$ respectively, we denote for $F\in C_{O_S}(\mathcal S)$ and each $j\in\mathbb Z$,
\begin{itemize}
\item $f^*:=H^j\Gamma(\mathcal S,k\circ\ad(f^*,f_*)(F)):\mathbb H^j(\mathcal S,F)\to\mathbb H^j(\mathcal S',f^*F)$,
\item $f^*:=H^j\Gamma(\mathcal S,k\circ\ad(f^{*mod},f_*)(F)):\mathbb H^j(\mathcal S,F)\to\mathbb H^j(\mathcal S',f^{*mod}F)$, 
\end{itemize}
the canonical maps.

\item For $m:A\to B$, $A,B\in C(\mathcal A)$, $\mathcal A$ an additive category, 
we denote $c(A):\Cone(m:A\to B)\to A[1]$ and $c(B):B\to\Cone(m:A\to B)$ the canonical maps.

\item For $X\in\Top$ and $Z\subset X$ a closed subset, denoting $j:X\backslash Z\hookrightarrow X$ the open complementary, 
we will consider
\begin{equation*}
\Gamma^{\vee}_Z\mathbb Z_X:=\Cone(\ad(j_!,j^*)(\mathbb Z_X):j_!j^*\mathbb Z_X\hookrightarrow\mathbb Z_X)\in C(X)
\end{equation*}
and denote for short $\gamma^{\vee}_Z:=\gamma^{\vee}_X(\mathbb Z_X):=c(\mathbb Z_X):\mathbb Z_X\to\Gamma^{\vee}_Z\mathbb Z_X$
the canonical map in $C(X)$.

\item Denote by $\Sch\subset\RTop$ the subcategory of schemes (the morphisms are the morphisms of locally ringed spaces).
We denote by $\PSch\subset\Sch$ the full subcategory of proper schemes.
For a field $k$, we consider $\Sch/k:=\Sch/\Spec k$ the category of schemes over $\Spec k$.
The objects are $X:=(X,a_X)$ with $X\in\Sch$ and $a_X:X\to\Spec k$ a morphism
and the morphisms are the morphisms of schemes $f:X'\to X$ such that $f\circ a_{X'}=a_X$. We then denote by
\begin{itemize}
\item $\Var(k)=\Sch^{ft}/k\subset\Sch/k$ the full subcategory consisting of algebraic varieties over $k$, 
i.e. schemes of finite type over $k$,
\item $\PVar(k)\subset\QPVar(k)\subset\Var(k)$ 
the full subcategories consisting of quasi-projective varieties and projective varieties respectively, 
\item $\PSmVar(k)\subset\SmVar(k)\subset\Var(k)$,  $\PSmVar(k):=\PVar(k)\cap\SmVar(k)$,
the full subcategories consisting of smooth varieties and smooth projective varieties respectively.
\end{itemize}

\item For $X\in\Sch$ a noetherian scheme and $p\in\mathbb N$, we denote $\mathcal Z^p(X)$ 
the $\mathbb Q$-vector space generated by closed subset of codimension $p$.

\item Denote by $\AnSp(\mathbb C)\subset\RTop$ the subcategory of analytic spaces over $\mathbb C$,
and by $\AnSm(\mathbb C)\subset\AnSp(\mathbb C)$ the full subcategory of smooth analytic spaces 
(i.e. complex analytic manifold).

\item For $X\in\Top$ and $X=\cup_{i\in I}X_i$ an open cover, 
we denote $X_{\bullet}\in\Fun(\Delta,\Top)$ the associated simplicial space,
with for $J\subset I$, $j_{IJ}:X_I:=\cap_{i\in I}X_i\hookrightarrow X_J:=\cap_{i\in J}X_i$ the open embedding. 

\item For $X\in\AnSm(\mathbb C)$ and $X=\cup_{i\in I}X_i$ an open cover, 
we denote $X_{\bullet}\in\Fun(\Delta,\AnSm(\mathbb C))$ the associated simplicial complex manifold,
with for $J\subset I$, $j_{IJ}:X_I:=\cap_{i\in I}X_i\hookrightarrow X_J:=\cap_{i\in J}X_i$ the open embedding. 

\item Let $(X,O_X)\in\RTop$. We consider its De Rham complex $\Omega_X^{\bullet}:=DR(X)(O_X)$.
\begin{itemize}
\item Let $X\in\Sch$. Considering its De Rham complex $\Omega_X^{\bullet}:=DR(X)(O_X)$,
we have for $j\in\mathbb Z$ its De Rham cohomology $H^j_{DR}(X):=\mathbb H^j(X,\Omega^{\bullet}_X)$.
\item Let $X\in\Var(k)$. Considering its De Rham complex $\Omega_X^{\bullet}:=\Omega_{X/k}^{\bullet}:=DR(X/k)(O_X)$,
we have for $j\in\mathbb Z$ its De Rham cohomology $H^j_{DR}(X):=\mathbb H^j(X,\Omega^{\bullet}_X)$.
The differentials of $\Omega_X^{\bullet}:=\Omega_{X/k}^{\bullet}$ are by definition $k$-linear,
thus $H^j_{DR}(X):=\mathbb H^j(X,\Omega^{\bullet}_X)$ has a structure of a $k$ vector space.
\item Let $X\in\AnSp(\mathbb C)$. Considering its De Rham complex $\Omega_X^{\bullet}:=DR(X)(O_X)$,
we have for $j\in\mathbb Z$ its De Rham cohomology $H^j_{DR}(X):=\mathbb H^j(X,\Omega^{\bullet}_X)$.
\end{itemize}

\item For $X\in\AnSp(\mathbb C)$, we denote $\alpha(X):\mathbb C_X\hookrightarrow\Omega_X^{\bullet}$ the embedding in $C(X)$.
For $X\in\AnSm(\mathbb C)$, $\alpha(X):\mathbb C_X\hookrightarrow\Omega_X^{\bullet}$ 
is an equivalence usu local by Poincare lemma.

\item We denote $\mathbb I^n:=[0,1]^n\in\Diff(\mathbb R)$ (with boundary).
For $X\in\Top$ and $R$ a ring, we consider its singular cochain complex
\begin{equation*}
C^*_{\sing}(X,R):=(\mathbb Z\Hom_{\Top}(\mathbb I^*,X)^{\vee})\otimes R 
\end{equation*}
and for $l\in\mathbb Z$ its singular cohomology $H^l_{\sing}(X,R):=H^nC^*_{\sing}(X,R)$.
For $f:X'\to X$ a continuous map with $X,X'\in\Top$, we have the canonical map of complexes
\begin{equation*}
f^*:C^*_{\sing}(X,R)\to C^*_{\sing}(X,R), \sigma\mapsto f^*\sigma:=(\gamma\mapsto\sigma(f\circ\gamma)).
\end{equation*}
In particular, we get by functoriality the complex 
\begin{equation*}
C^*_{X,R\sing}\in C_R(X), \; (U\subset X)\mapsto C^*_{\sing}(U,R)
\end{equation*}
We recall that 
\begin{itemize}
\item For $X\in\Top$ locally contractible, e.g. $X\in\CW$, and $R$ a ring, the inclusion in $C_R(X)$
$c_X:R_X\to C^*_{X,R\sing}$ is by definition an equivalence top local and that we get 
by the small chain theorem, for all $l\in\mathbb Z$, an isomorphism 
$H^lc_X:H^l(X,R_X)\xrightarrow{\sim}H^l_{\sing}(X,R)$.
\item For $X\in\Diff(\mathbb R)$, the restriction map 
\begin{equation*}
r_X:\mathbb Z\Hom_{\Diff(\mathbb R)}(\mathbb I^*,X)^{\vee}\to C^*_{\sing}(X,R), \; 
w\mapsto w:(\phi\mapsto w(\phi))
\end{equation*}
is a quasi-isomorphism by Whitney approximation theorem.
\end{itemize}

\item We will consider the morphism of site given by the analytification functor
\begin{equation*}
\An:\AnSp(\mathbb C)\to\Var(\mathbb C), \; \; X\mapsto\An(X):=X^{an}, \; (f:X'\to X)\mapsto\An(f):=f^{an}.
\end{equation*}
For $X\in\Var(\mathbb C)$, we denote by 
\begin{equation*}
\an_X:=\An_{|X^{et}}:=X^{an,et}\to X^{et}, \; \an_X(U\to X):=U^{an}, 
\end{equation*}
its restriction to the small etale site $o_X(X^{et})\subset\Var(\mathbb C)$, where $o_X(U\to X)=U$,
in particular we have the following commutative diagram
\begin{equation*}
\xymatrix{\AnSp(\mathbb C)\ar[r]^{\An}\ar[d]^{o_{X^{an}}} & \Var(\mathbb C)\ar[d]^{o_X} \\
X^{an,et}\ar[r]^{\an_X} & X^{et}},
\end{equation*}
where $o_{X^{an}}(U\hookrightarrow X^{an})=U$. 
We get the morphism in $\RTop$ $\an_X:=\an_{X|O(X)}:X^{an}\to X$, 
$o:O(X)\subset X^{et}$ being the embedding of the Zariski open subsets of $X$.
\end{itemize}

\subsection{Complex integral periods}

Let $k$ be a field of characteristic zero and $X\in\SmVar(k)$ be a smooth variety. 
Let $X=\cup_{i=1}^sX_i$ be an open affine cover. 
For each embedding $\sigma:k\hookrightarrow\mathbb C$, we have
the evaluation period embedding map which is the morphism of bi-complexes 
\begin{eqnarray*}
ev(X)^{\bullet}_{\bullet}:\Gamma(X_{\bullet},\Omega^{\bullet}_{X_{\bullet}})\to 
\mathbb Z\Hom_{\Diff}(\mathbb I^{\bullet},X^{an}_{\mathbb C,\bullet})^{\vee}\otimes\mathbb C, \\
w^l_I\in\Gamma(X_I,\Omega^l_{X_I})\mapsto 
(ev(X)^l_I(w^l_I):\phi^l_I\in\mathbb Z\Hom_{\Diff}(\mathbb I^l,X^{an}_{\mathbb C,I})^{\vee}\otimes\mathbb C
\mapsto ev^l_I(w^l_I)(\phi^l_I):=\int_{\mathbb I^l}\phi_I^{l*}w^l_I)
\end{eqnarray*}
given by integration. By taking all the affine open cover $(j_i:X_i\hookrightarrow X)$ of $X$,
we get for $\sigma:k\hookrightarrow\mathbb C$, the evaluation period embedding map 
\begin{eqnarray*}
ev(X):=\varinjlim_{(j_i:X_i\hookrightarrow X)}ev(X)^{\bullet}_{\bullet}:
\varinjlim_{(j_i:X_i\hookrightarrow X)}\Gamma(X_{\bullet},\Omega^{\bullet}_{X_{\bullet}})
\to \varinjlim_{(j_i:X_i\hookrightarrow X)}
\mathbb Z\Hom_{\Diff(\mathbb R)}(\mathbb I^{\bullet},X^{an}_{\mathbb C,\bullet})^{\vee}\otimes\mathbb C
\end{eqnarray*}
It induces in cohomology, for $j\in\mathbb Z$, the evaluation period map
\begin{eqnarray*}
H^jev(X)=H^jev(X)^{\bullet}_{\bullet}:H^j_{DR}(X)=H^j\Gamma(X_{\bullet},\Omega^{\bullet}_{X_{\bullet}})\to 
H_{\sing}^j(X_{\mathbb C}^{an},\mathbb C)=
H^j(\Hom_{\Diff(\mathbb R)}(\mathbb I^{\bullet},X^{an}_{\mathbb C,\bullet})^{\vee}\otimes\mathbb C). 
\end{eqnarray*}
which does NOT depend on the choice of the affine open cover 
by acyclicity of quasi-coherent sheaves on affine noetherian schemes for the left hand side
and from Mayer-Vietoris quasi-isomorphism for singular cohomology of topological spaces
and Whitney approximation theorem for differential manifolds for the right hand side.

Let $X\in\SmVar(k)$. Then, 
\begin{equation*}
H^*ev(X_{\mathbb C})=H^*R\Gamma(X_{\mathbb C}^{an},\alpha(X))^{-1}\circ\Gamma(X_{\mathbb C}^{an},E_{zar}(\Omega(\an_X))):
H^*_{DR}(X_{\mathbb C})\xrightarrow{\sim}H^*_{\sing}(X_{\mathbb C}^{an},\mathbb C)
\end{equation*}
is the canonical isomorphism induced by the analytical functor and the quasi-isomorphism 
$\alpha(X):\mathbb C_{X_{\mathbb C}^{an}}\hookrightarrow\Omega^{\bullet}_{X^{an}_{\mathbb C}}$, 
which gives the periods elements $H^*ev(X)(H^*_{DR}(X))\subset H^*_{\sing}(X_{\mathbb C}^{an},\mathbb C)$.

\subsection{Algebraic cycles and motives}

For $X\in\Sch$ noetherian irreducible and $d\in\mathbb N$, 
we denote by $\mathcal Z^d(X)$ the group of algebraic cycles of codimension $d$,
which is the free abelian group generated by irreducible closed subsets of codimension $d$.

For $X,X'\in\Sch$ noetherian, with $X'$ irreducible, we denote by 
$\mathcal Z^{fs/X'}(X'\times X)\subset\mathcal Z_{d_{X'}}(X'\times X)$
which consist of algebraic cycles $\alpha=\sum_in_i\alpha_i\in\mathcal Z_{d_{X'}}(X'\times X)$ such that,
denoting $\supp(\alpha)=\cup_i\alpha_i\subset X'\times X$ its support and $p':X'\times X\to X'$ the projection,
$p'_{|\supp(\alpha)}:\supp(\alpha)\to X'$ is finite surjective.

Let $k$ be a field.
We denote by $\Cor\SmVar(k)$ the category whose objects are $\left\{X\in\SmVar(k)\right\}$ and 
\begin{equation*}
\Hom_{\Cor\SmVar(k)}(X',X):=\mathcal Z^{fs/X'}(X'\times X) 
\end{equation*}
(see \cite{CD} for the composition law).
We denote by $\Tr:\Cor\SmVar(k)\to\SmVar(k)$ the morphism of site given by the embedding 
$\Tr:\SmVar(k)\hookrightarrow\Cor\SmVar(k)$. Let $F\in\PSh(\SmVar(k))$, we say that $F$ admits
transfers if $F=\Tr_*F$ with $F\in\PSh(\Cor\SmVar(k))$. 

Let $k$ be a field.
We recall that $\DA(k):=\Ho_{\mathbb A^1,et}(C(\SmVar(k)))$ is the derived category of mixed motives.

We have (see e.g. \cite{B5}) the Hodge realization functor
\begin{equation*}
\mathcal F^{Hdg}:\DA(\mathbb C)\to D(MHS), \: 
M\mapsto\mathcal F^{Hdg}(M):=\iota^{-1}(\mathcal F^{FDR}(M),\Bti^*M,\alpha(M)),
\end{equation*} 
where $D(MHS)\hookrightarrow D_{2fil}(\mathbb C)\times_ID_{fil}(\mathbb Q)$ 
is the derived category of mixed Hodge structures.

For $X\in\Var(k)$ and $Z\subset X$ a closed subset, denoting $j:X\backslash Z\hookrightarrow X$ the open complementary, 
we will consider
\begin{equation*}
\Gamma^{\vee}_Z\mathbb Z_X:=\Cone(\ad(j_{\sharp},j^*)(\mathbb Z_X):
j_{\sharp}j^*\mathbb Z_X\hookrightarrow\mathbb Z_X)\in C(\Var(k)^{sm}/X)
\end{equation*}
and denote for short 
\begin{equation*}
\gamma^{\vee}_Z:=\gamma^{\vee}_Z(\mathbb Z_X):=c(\mathbb Z_X):
\mathbb Z_X\to\Gamma^{\vee}_Z\mathbb Z_X
\end{equation*}
the canonical map in $C(\Var(k)^{sm}/X)$.
For $X\in\Var(k)$ and $Z\subset X$ a closed subset, denoting $a_X:X\to\Spec k$ the structural map,
we have then the motive of $X$ with support in $Z$ defined as 
\begin{equation*}
M_Z(X):=a_{X!}\Gamma^{\vee}_Za_X^!\mathbb Z\in\DA(k).
\end{equation*}
If $X\in\SmVar(k)$, we will also consider
\begin{equation*}
a_{X\sharp}\Gamma^{\vee}_Z\mathbb Z_X:=\Cone(a_{X\sharp}\circ\ad(j_{\sharp},j^*)(\mathbb Z_X):
\mathbb Z(U)\hookrightarrow\mathbb Z(X))=:\mathbb Z(X,X\backslash Z)\in C(\SmVar(k)),
\end{equation*}
and denote for short 
\begin{equation*}
\gamma^{\vee}_Z:=a_{X\sharp}\gamma^{\vee}_Z(\mathbb Z_X):=c(\mathbb Z(X)):\mathbb Z(X)\to\mathbb Z(X,X\backslash Z)
\end{equation*}
the map in $C(\SmVar(k))$.
Then for $X\in\SmVar(k)$ and $Z\subset X$ a closed subset
\begin{equation*}
M_Z(X):=a_{X!}\Gamma^{\vee}_Za_X^!\mathbb Z
=a_{X\sharp}\Gamma^{\vee}_Z\mathbb Z_X=:\mathbb Z(X,X\backslash Z)\in\DA(k).
\end{equation*}

\begin{itemize}
\item Let $(X,Z)\in\Sch^2$ with $X\in\Sch$ a noetherian scheme and $Z\subset X$ a closed subset.
We have the deformation $(D_ZX,\mathbb A^1_Z)\to\mathbb A^1$, $(D_ZX,\mathbb A^1_Z)\in\Sch^2$ 
of $(X,Z)$ by the normal cone $C_{Z/X}\to Z$, i.e. such that 
\begin{equation*}
(D_ZX,\mathbb A^1_Z)_s=(X,Z), \, s\in\mathbb A^1\backslash 0, \;  (D_ZX,\mathbb A^1_Z)_0=(C_{Z/X},Z).
\end{equation*}
We denote by $i_1:(X,Z)\hookrightarrow (D_ZX,\mathbb A^1_Z)$ and 
$i_0:(C_{Z/X},Z)\hookrightarrow (D_ZX,\mathbb A^1_Z)$ the closed embeddings in $\Sch^2$.
\item Let $k$ be a field of characteristic zero. Let $X\in\SmVar(k)$. 
For $Z\subset X$ a closed subset of pure codimension $c$,
consider a desingularisation $\epsilon:\tilde Z\to Z$ of $Z$ and denote $n:\tilde Z\xrightarrow{\epsilon}Z\subset X$.
We have then the morphism in $\DA(k)$
\begin{equation*}
G_{Z,X}:M(X)\xrightarrow{D(\mathbb Z(n))}M(\tilde Z)(c)[2c]\xrightarrow{\mathbb Z(\epsilon)}M(Z)(c)[2c]
\end{equation*}
where $D:\Hom_{\DA(k)}(M_c(\tilde Z),M_c(X))\xrightarrow{\sim}\Hom_{\DA(k)}(M(X),M(\tilde Z)(c)[2c])$
is the duality isomorphism from the six functors formalism (moving lemma of Suzlin and Voevodsky)
and $\mathbb Z(n):=\ad(n_!,n^!)(a_X^!\mathbb Z)$, noting that $n_!=n_*$ since $n$ is proper and that
$a_X^!=a_X^*[d_X]$ and $a_{\tilde Z}^!=a_{\tilde Z}^*[d_Z]$ since $X$, resp. $\tilde Z$, are smooth
(considering the connected components, we may assume $X$ and $\tilde Z$ of pure dimension).
\end{itemize}

We recall the following facts (see \cite{CD} and \cite{B5}):

\begin{prop}\label{smXZ}
Let $k$ be a field a characteristic zero.
Let $X\in\SmVar(k)$ and $i:Z\subset X$ a smooth subvariety of pure codimension d.
Then $C_{Z/X}=N_{Z/X}\to Z$ is a vector bundle of rank $d$.
The closed embeddings $i_1:(X,Z)\hookrightarrow (D_ZX,\mathbb A^1_Z)$ and 
$i_0:(C_{Z/X},Z)\hookrightarrow (D_ZX,\mathbb A^1_Z)$ in $\SmVar^2(k)$ induces isomorphisms of motives
$\mathbb Z(i_1):M_Z(X)\xrightarrow{\sim}M_{\mathbb A^1_Z}(D_ZX)$ and 
$\mathbb Z(i_0):M_Z(N_{Z/X})\xrightarrow{\sim}M_{\mathbb A^1_Z}(D_ZX)$ in $\DA(k)$.
We get the excision isomorphism in $\DA(k)$
\begin{equation*}
P_{Z,X}:=\mathbb Z(i_0)^{-1}\circ\mathbb Z(i_1):M_Z(X)\xrightarrow{\sim}M_Z(N_{Z/X}).
\end{equation*}
We have 
\begin{equation*}
Th(N_{Z/X})\circ P_{Z,X}\circ\gamma^{\vee}_Z(\mathbb Z_X)=G_{Z,X}:=D(\mathbb Z(i)):M(X)\to M(Z)(d)[2d].
\end{equation*}
\end{prop}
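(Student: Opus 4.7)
The plan is to establish the three isomorphisms and the compatibility in sequence. First, since $X$ and $Z$ are both smooth over $k$, the closed immersion $i$ is regular of codimension $d$, so the conormal sheaf $I_Z/I_Z^2$ is locally free of rank $d$ on $Z$ and the normal cone $C_{Z/X}$ coincides with the geometric vector bundle $N_{Z/X}\to Z$ of rank $d$ associated to the symmetric algebra on $I_Z/I_Z^2$. The deformation space $D_ZX$ is constructed as the complement, inside the blow-up of $X\times\mathbb{A}^1_k$ along $Z\times\{0\}$, of the proper transform of $X\times\{0\}$; this yields a flat family over $\mathbb{A}^1$ with fibers as asserted.

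Next, I would show that $\mathbb{Z}(i_0)$ and $\mathbb{Z}(i_1)$ are isomorphisms in $\DA(k)$ using $\mathbb{A}^1$-homotopy invariance. Etale-locally on $Z$ (via the smoothness of $i$) one chooses a chart in which $i$ is the inclusion of $\mathbb{A}^{d_Z}_k\times\{0\}$ into $\mathbb{A}^{d_Z}_k\times\mathbb{A}^d_k$; the deformation space then trivializes and both $i_0$ and $i_1$ become inclusions of fibers of a projection to $\mathbb{A}^1$, which induce isomorphisms of motives with support since the inclusion of a rational point into $\mathbb{A}^1$ is invertible in $\DA(k)$. One glues these local identifications by Nisnevich descent together with Mayer--Vietoris for motives with support --- this is the deformation-to-the-normal-cone construction of purity in the motivic setting of Morel--Voevodsky and Cisinski--Déglise. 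The Thom isomorphism $Th(N_{Z/X}):M_Z(N_{Z/X})\xrightarrow{\sim}M(Z)(d)[2d]$ comes from the standard computation of the Thom motive of a rank-$d$ vector bundle, via homotopy invariance applied to $N_{Z/X}\to Z$ and the localization triangle for the zero section $Z\hookrightarrow N_{Z/X}$; then $P_{Z,X}:=\mathbb{Z}(i_0)^{-1}\circ\mathbb{Z}(i_1)$ is defined.

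\textbf{The main obstacle} is the final compatibility $Th(N_{Z/X})\circ P_{Z,X}\circ\gamma^{\vee}_Z=D(\mathbb{Z}(i))$, which asserts that the purity map built geometrically via deformation to the normal cone coincides with the Gysin map produced by Spanier--Whitehead duality in the six-functor formalism. I would tackle this by naturality and reduction to a canonical model: both sides are compatible with composition of closed immersions, so applying naturality to the deformation diagram itself reduces the verification to the case of the zero section $Z\hookrightarrow N_{Z/X}$ of a vector bundle. In that case the statement becomes the assertion that the Thom class realizes the duality pairing between $M_Z(N_{Z/X})$ and $M(Z)(d)[2d]$, which is exactly the axiomatic input underlying the Cisinski--Déglise construction of purity that I would quote from \cite{CD} (and that is applied in the same form in \cite{B5}).
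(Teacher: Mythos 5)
Your proposal is correct and amounts to an expanded sketch of exactly what the paper does: the paper's entire proof is a citation to \cite{CD}, and your argument is the standard deformation-to-the-normal-cone purity construction of Morel--Voevodsky and Cisinski--D\'eglise, with the key compatibility $Th(N_{Z/X})\circ P_{Z,X}\circ\gamma^{\vee}_Z=D(\mathbb Z(i))$ quoted from the same source. So you take essentially the same route, merely making explicit the steps the paper delegates to the reference.
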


\begin{proof}
See \cite{CD}.
\end{proof}

\begin{defi}\label{ResMot}
Let $k$ be a field a characteristic zero.
Let $X\in\SmVar(k)$ and $i:Z\subset X$ a smooth subvariety of pure codimension d.
We consider using proposition \ref{smXZ} the canonical map in $\DA(k)$
\begin{equation*}
Res_{Z,X}:M(Z)(d)[-2d]\xrightarrow{P_{Z,X}^{-1}\circ Th(N_{Z/X})^{-1}}M_Z(X)
\xrightarrow{c(\mathbb Z(X\backslash Z))}M(X\backslash Z)[1].
\end{equation*}
We recall that $c(\mathbb Z(X\backslash Z)):\mathbb Z(X,X\backslash Z)\to\mathbb Z(X\backslash Z)[1]$ 
is the canonical map in $C(\SmVar(k))$.
\end{defi}

\begin{itemize}
\item Recall that we say that $F\in\PSh(\SmVar(k))$ is $\mathbb A^1$ invariant if for all $X\in\SmVar(k)$,
$p^*:=F(p):F(X)\to F(X\times\mathbb A^1)$ is an isomorphism, where $p:X\times\mathbb A^1\to X$ is the projection.
\item Similarly, we say that $F\in\PSh(\AnSm(\mathbb C))$ is $\mathbb A^1$ invariant if for all $X\in\AnSm(\mathbb C)$,
$p^*:=F(p):F(X)\to F(X\times\mathbb A^1)$ is an isomorphism, where $p:X\times\mathbb A^1\to X$ is the projection.
\end{itemize}

\subsection{The logaritmic De Rham complexes}

We recall from \cite{B7} the following notion :

\begin{itemize}
\item For $k$ a field and $X\in\Var(k)$, we consider the embedding in $C(X)$
\begin{equation*}
OL_X:\Omega^{\bullet}_{X,\log}\hookrightarrow\Omega_X^{\bullet}:=\Omega^{\bullet}_{X/k},
\end{equation*}
such that, for $X^o\subset X$ an open subset and $w\in\Omega^p_X(X^o)$,
$w\in\Omega^p_{X,\log}(X^o)$ if and only if there exists $(n_i)_{1\leq i\leq s}\in\mathbb Z$ and 
$(f_{i,\alpha_k})_{1\leq i\leq s,1\leq k\leq p}\in\Gamma(X^o,O_X)^*$ such that
\begin{equation*}
w=\sum_{1\leq i\leq s}n_idf_{i,\alpha_1}/f_{i,\alpha_1}\wedge\cdots\wedge df_{i,\alpha_p}/f_{i,\alpha_p}
\in\Omega^p_X(X^o),
\end{equation*}
and for $p=0$, $\Omega^0_{X,\log}:=\mathbb Z$. Let $k$ be a field. We get an embedding in $C(\Var(k))$
\begin{eqnarray*}
OL:\Omega^{\bullet}_{/k,\log}\hookrightarrow\Omega^{\bullet}_{/k}, \; 
\mbox{given by}, \, \mbox{for} \, X\in\Var(k), \\
OL(X):=OL_X:\Omega^{\bullet}_{/k,\log}(X):=\Gamma(X,\Omega^{\bullet}_{X,\log})
\hookrightarrow\Gamma(X,\Omega^{\bullet}_X)=:\Omega^{\bullet}_{/k}(X)
\end{eqnarray*}
and its restriction to $\SmVar(k)\subset\Var(k)$.
\item For $X\in\AnSp(\mathbb C)$, we consider the embedding in $C(X)$
\begin{equation*}
OL_X:\Omega^{\bullet}_{X,\log}\hookrightarrow\Omega_X^{\bullet}:=\Omega^{\bullet}_{X/\mathbb C},
\end{equation*}
such that, for $X^o\subset X$ an open subset and $w\in\Omega^p_X(X^o)$,
$w\in\Omega^p_{X,\log}(X^o)$ if and only if there exists $(n_i)_{1\leq i\leq s}\in\mathbb Z$ and 
$(f_{i,\alpha_k})_{1\leq i\leq s,1\leq k\leq p}\in\Gamma(X^o,O_X)^*$ such that
\begin{equation*}
w=\sum_{1\leq i\leq s}n_idf_{i,\alpha_1}/f_{i,\alpha_1}\wedge\cdots\wedge df_{i,\alpha_p}/f_{i,\alpha_p}
\in\Omega^p_X(X^o),
\end{equation*}
and for $p=0$, $\Omega^0_{X,\log}:=\mathbb Z$. The differential is by definition $\mathbb C$ linear.
Recall that for $f\in O(X)$, $df=0$ if and only if $f$ is locally constant with complex values, that is
$f_{|X_i^o}=\lambda_i\in\mathbb C$ for each $1\leq i\leq r$, 
where $X=\sqcup_{i=1}^r X_i$ is the decomposition into connected components.
We get an embedding in $C(\AnSp(\mathbb C))$
\begin{eqnarray*}
OL^{an}:\Omega^{\bullet,an}_{\log}\hookrightarrow\Omega^{\bullet,an}, \; 
\mbox{given by}, \, \mbox{for} \, X\in\AnSp(\mathbb C), \\
OL^{an}(X):=OL_X:\Omega^{\bullet,an}_{\log}(X):=\Gamma(X,\Omega^{\bullet}_{X,\log})
\hookrightarrow\Gamma(X,\Omega^{\bullet}_X)=:\Omega^{\bullet,an}(X)
\end{eqnarray*}
and its restriction to $\AnSm(\mathbb C)\subset\AnSp(\mathbb C)$.
\end{itemize}

In the analytic case we also consider the following
\begin{defi}\label{omegazero}
For $X\in\AnSp(\mathbb C)$, we consider the morphism in $C(X)$
\begin{equation*}
OL_{X,0}:\Omega^{\bullet}_{X,\log,0}:=(\wedge^{\bullet} O_X^*)\otimes\mathbb Q\xrightarrow{q_L}\Omega^{\bullet}_{X,\log}
\xrightarrow{OL_X}\Omega_X^{\bullet},
\end{equation*}
where, for $X^o\subset X$ an open subset, $p\in\mathbb N$,
\begin{eqnarray*}
\Omega^{p}_{X,\log,0}(X^o):=\wedge^pO_X^*(X^o)\otimes\mathbb Q =
<df_{\alpha_1}/f_{\alpha_1}\wedge\cdots\wedge df_{\alpha_p}/f_{\alpha_p}, f_{\alpha_j}\in O(X^o)^*>/ \\
<df_{\alpha_1}/f_{\alpha_1}\wedge\cdots\wedge d(f_{\alpha_j}f'_{\alpha_j})/(f_{\alpha_j}f'_{\alpha_j})
\wedge\cdots\wedge df_{\alpha_p}/f_{\alpha_p} \\
-df_{\alpha_1}/f_{\alpha_1}\wedge\cdots\wedge df_{\alpha_j}/f_{\alpha_j}\wedge\cdots\wedge df_{\alpha_p}/f_{\alpha_p}
-df_{\alpha_1}/f_{\alpha_1}\wedge\cdots\wedge df'_{\alpha_j}/f'_{\alpha_j}\wedge\cdots\wedge df_{\alpha_p}/f_{\alpha_p}>\otimes\mathbb Q 
\end{eqnarray*}
is the abstract abelian group generated by logarithmic forms,
for $p<0$ of course $\Omega^p_{X,\log,0}=0$, and by definition all the differentials of this complex are trivial.
Note that $\Omega^1_{X,\log,0}=O_X^*\otimes\mathbb Q $.
For $f\in O(X^o)^*$, $df=0\in\Omega^1_{X,\log,0}(X^o)$ if and only if $f=1$. 
Also note that the sheaf of abelian groups $O_X^*$ is NOT endowed with a structure of $O_X$ module and that we take the wedge product of sheaves of abelian groups.
In particular $OL_{X,0}$ is NOT injective.  
For $\lambda\in\mathbb C$ and $f\in O(X^o)^*$, 
\begin{equation*}
d(\lambda f)/(\lambda f)\neq df/f\in\Omega^1_{X,\log,0}(X^o), \;
d(\lambda f)/(\lambda f)-df/f=d\lambda/\lambda\in\Omega^1_{X,\log,0}(X^o).
\end{equation*}
We get the morphism in $C(\AnSp(\mathbb C))$
\begin{eqnarray*}
OL_0^{an}:\Omega^{\bullet,an}_{\log,0}\xrightarrow{q_L}\Omega^{\bullet,an}_{\log}\xrightarrow{OL^{an}}\Omega^{\bullet,an}, \; 
\mbox{given by}, \, \mbox{for} \, X\in\AnSp(\mathbb C), \\
OL_0^{an}(X):=OL_{X,0}:\Omega^{\bullet,an}_{\log,0}(X):=\Gamma(X,\Omega^{\bullet}_{X,\log,0})
\to\Gamma(X,\Omega^{\bullet}_X)=:\Omega^{\bullet,an}(X)
\end{eqnarray*}
and its restriction to $\AnSm(\mathbb C)\subset\AnSp(\mathbb C)$.
\end{defi}

\begin{rem}\label{ballclog}
Note that for $X\in\AnSp(\mathbb C)$
and $\partial f\in\Omega^{1,an}(X^{an})$ with $f\in O(X^{an})$, we have 
$\partial f=(\partial e^f)/e^f\in\Omega^{1,an}_{\log}(X^{an})$, 
hence, since open balls form a basis of the complex topology on complex analytic manifold, 
for each $p\in\mathbb Z$, $p\neq 0$, 
\begin{equation*}
a_{usu}(\Omega^{p,an}_{\log})=\Omega^{p,an,\partial=0}=a_{usu}\partial\Omega^{p-1,an}.
\end{equation*}
On the other hand, for $X\in\AnSp(\mathbb C)$, $H^1(X,\Omega^{1,an}_{\log,0})=\Pic(X)$ is the Picard group,
$H^1(X,\Omega^{1,an,\partial=0})=\mathbb H^2(X,F^1\Omega_X^{\bullet})$ is a complex vector space and 
\begin{equation*}
OL_{X,0}:H^1(X,\Omega^{1,an}_{\log,0})=\Pic(X)\to
\mathbb H^2(X,F^1\Omega_X^{\bullet})\to\mathbb H^2(X,\Omega_X^{\bullet})
\end{equation*}
is the cycle class map.
\end{rem}

We will also consider :

\begin{defi}\label{Omegane}
We consider the (non full) subcategory $\AnSm(\mathbb C)^{pr}\subset\AnSp(\mathbb C)$
whose set of objects consists of $X\times\mathbb A^{1,an}$, $X\in\AnSm(\mathbb C)$ and 
\begin{equation*}
\Hom_{\AnSm(\mathbb C)^{pr}}(X\times\mathbb A^{1,an},Y\times\mathbb A^{1,an}):=
\Hom_{\AnSp(\mathbb C)}(X,Y)\otimes I_{\mathbb A^{1,an}}. 
\end{equation*}
We have then the morphism of sites
\begin{itemize}
\item $\pi:\AnSm(\mathbb C)\to\AnSm(\mathbb C)^{pr}$, $\pi(X\times\mathbb A^{1,an}):=X$, $\pi(f\otimes I):=f$,
\item $\rho:\AnSm(\mathbb C)^{pr}\to\AnSm(\mathbb C)$, $\rho(X):=X\times\mathbb A^{1,an}$, $\rho(f):=f\otimes I$,
\end{itemize}
in particular $\rho\circ\pi=I$.
We have then, for each $p\in\mathbb Z$, a canonical splitting in $\PSh(\AnSm(\mathbb C)^{pr})$
\begin{equation*}
\Omega^{p,an,\partial=0}=\rho_*\Omega^{p,an,\partial=0}\oplus\Omega^{p,e}, 
\end{equation*}
with
\vskip -1cm
\begin{eqnarray*}
\Omega^{p,e}(X\times\mathbb A^1):=\left\{\sum_{n=1}^{\infty}a_nz^nw(x)+w'\wedge dz,
a_n\in\mathbb C,w\in\Omega^p(X),w'\in\Omega^{p-1}(X\times\mathbb A^1)\right\} \\
\subset\Omega^{p,an,\partial=0}(X\times\mathbb A^1)
\end{eqnarray*}
where $z$ is the coordinate of $\mathbb A^1$, $a_nr^n\to 0$ when $n\to\infty$ for all $r\in\mathbb R^+$, and $x\in X$.
We have the factorization in $\PSh(\AnSm(\mathbb C))$
\begin{eqnarray*}
\ad(\pi):=\ad(\pi^*,\pi_*)(-):\pi^*\Omega^{p,e}\xrightarrow{\subset}
\pi^*\Omega^{p,an,\partial=0}:=\pi^*\pi_*\Omega^{p,an,\partial=0} \\
\xrightarrow{(0,\ad(\pi))}\ad(\pi)(\pi^*\Omega^{p,e})\xrightarrow{i_{\ad(\pi)}}\Omega^{p,an,\partial=0},
\end{eqnarray*}
and we denote again by abuse 
\begin{itemize}
\item $\pi^*\Omega^{\bullet,e}:=\ad(\pi)(\pi^*\Omega^{\bullet,e})\subset
\Omega^{\bullet,an,\partial=0}\subset\Omega^{\bullet,an}$,
\item $\pi^*\Omega^{\bullet,e}:=\ad(\pi)(\pi^*\Omega^{\bullet,e})\cap\Omega^{\bullet,an}_{\log}
\subset\Omega^{p,an,\partial=0}\subset\Omega^{\bullet,an}$.
\end{itemize}
We will consider, referring to definition \ref{omegazero},
for each $p\in\mathbb Z$, the exact sequence in $\PSh(\AnSm(\mathbb C))$
\begin{equation*}
0\to\Omega^{p,an}_{\log,0}\cap q_L^{-1}(\pi^*\Omega^{p,e})\xrightarrow{e}\Omega^{p,an}_{\log,0}\xrightarrow{q}
\Omega^{p,an,ne}_{\log,0}:=\Omega^{p,an}_{\log,0}/(\Omega^{p,an}_{\log,0}\cap q_L^{-1}(\pi^*\Omega^{p,e}))\to 0.
\end{equation*}
We have then the commutative diagram in $C(\AnSm(\mathbb C))$
\begin{equation*}
\xymatrix{\Omega_{\log,0}^{\bullet,an}\cap q_L^{-1}(\pi^*\Omega^{\bullet,e})
\ar[d]_e\ar[rr]_{OL_0^{an}} & \, & \pi^*\Omega^{\bullet,e}\ar[d]^e \\
\Omega^{\bullet,an}_{\log,0}\ar[d]_q\ar[rr]_{OL_0^{an}} & \, & \Omega^{\bullet,an}\ar[d]^q \\
\Omega^{\bullet,an,ne}_{\log,0}:=
\Omega^{\bullet,an}_{\log,0}/(\Omega_{\log,0}^{\bullet,an}\cap q_L^{-1}(\pi^*\Omega^{\bullet,e}))
\ar[rr]^{\bar OL_0^{an}} & \, &  \Omega^{\bullet,an}/(\pi^*\Omega^{\bullet,e})},
\end{equation*}
whose columns are exact with $e$ injective and $q$ surjective, and of course all differentials
of $\Omega_{\log,0}^{\bullet,an}$ and of $\pi^*\Omega^{\bullet,e}$ are trivial.
\end{defi}

\begin{lem}\label{splitting}
We have a splitting in $C(\AnSm(\mathbb C))$
\begin{equation*}
a_{usu}\Omega_{\log,0}^{\bullet,an}=
a_{usu}(\Omega_{\log,0}^{\bullet,an}\cap q_L^{-1}(\pi^*\Omega^{\bullet,e}))\oplus 
a_{usu}\Omega^{\bullet,an,ne}_{\log,0}
\end{equation*}
\end{lem}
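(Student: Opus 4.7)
Since by Definition \ref{omegazero} all the differentials in $\Omega^{\bullet,an}_{\log,0}$ vanish, the same is true for both the subcomplex $\Omega^{\bullet,an}_{\log,0} \cap q_L^{-1}(\pi^*\Omega^{\bullet,e})$ and the quotient $\Omega^{\bullet,an,ne}_{\log,0}$. The claimed splitting in $C(\AnSm(\mathbb{C}))$ therefore reduces to the degreewise splitting, for every $p \in \mathbb{Z}$, of the short exact sequence of $\mathbb{Q}$-vector space presheaves
\begin{equation*}
0 \to \Omega^{p,an}_{\log,0} \cap q_L^{-1}(\pi^*\Omega^{p,e}) \xrightarrow{e} \Omega^{p,an}_{\log,0} \xrightarrow{q} \Omega^{p,an,ne}_{\log,0} \to 0
\end{equation*}
after sheafification in the usual topology.

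My plan is to construct a local retraction $r_U$ of $e$ on a basis of simply connected open polydisks $U \subset X^{an}$ and then glue. On such a $U$, the exponential sequence $0 \to 2\pi i \mathbb{Z} \to O(U) \xrightarrow{\exp} O^*(U) \to 1$ is exact, and since $O(U)$ is a $\mathbb{C}$-vector space, hence $\mathbb{Q}$-divisible, tensoring with $\mathbb{Q}$ yields the identification $O^*(U) \otimes \mathbb{Q} \cong O(U)/2\pi i \mathbb{Q}$, under which $q_L$ sends $[h_1 \wedge \cdots \wedge h_p]$ to $dh_1 \wedge \cdots \wedge dh_p$. The main tool is the canonical direct sum decomposition $\Omega^{p,an,\partial=0} = \rho_*\Omega^{p,an,\partial=0} \oplus \Omega^{p,e}$ in $\PSh(\AnSm(\mathbb{C})^{pr})$ recalled in Definition \ref{Omegane}, which by $\pi^*$ and the adjunction $\ad(\pi^*,\pi_*)$ produces a projector $P: \Omega^{p,an,\partial=0} \twoheadrightarrow \pi^*\Omega^{p,e}$ of presheaves on $\AnSm(\mathbb{C})$. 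Applying $P$ to $q_L(\omega)$ and then lifting back through $q_L$---the lift being well-defined modulo $\ker q_L$, which is already contained in $A_p := \Omega^{p,an}_{\log,0} \cap q_L^{-1}(\pi^*\Omega^{p,e})$ since wedges of elements of $\mathbb{C}^*$ map to zero under $q_L$---produces the required local retraction $r_U$.

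The principal obstacle is verifying that the local retractions $r_U$ assemble into a sheaf morphism after the application of $a_{usu}$. This compatibility follows from the canonicity of both the decomposition in $\PSh(\AnSm(\mathbb{C})^{pr})$ and the presheaf morphism $q_L$: any ambiguity in lifting through $q_L$ is absorbed into $\ker q_L \subset A_p$, so the induced maps on $\Omega^{p,an}_{\log,0}$ agree on overlaps modulo sheafification. The resulting sheaf retraction $r: a_{usu}\Omega^{p,an}_{\log,0} \twoheadrightarrow a_{usu}(\Omega^{p,an}_{\log,0} \cap q_L^{-1}(\pi^*\Omega^{p,e}))$ then provides the desired direct sum decomposition in $C(\AnSm(\mathbb{C}))$.
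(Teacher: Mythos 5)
There is a genuine gap, and it sits exactly at the step you call the ``main tool''. The decomposition $\Omega^{p,an,\partial=0}=\rho_*\Omega^{p,an,\partial=0}\oplus\Omega^{p,e}$ of definition \ref{Omegane} lives on the restricted site $\AnSm(\mathbb C)^{pr}$, whose objects are products $Y\times\mathbb A^{1,an}$ and whose morphisms are of the form $f\otimes I$; applying $\pi^*$ only splits the presheaf $\pi^*\pi_*\Omega^{p,an,\partial=0}$ on $\AnSm(\mathbb C)$, not $\Omega^{p,an,\partial=0}$ itself. The counit $\ad(\pi^*,\pi_*)$ is a map $\pi^*\pi_*\Omega^{p,an,\partial=0}\to\Omega^{p,an,\partial=0}$ which is neither injective nor surjective at the presheaf level: a closed form on a general $X$ need not be a pullback along any $m:X\to Y\times\mathbb A^{1}$, and when it is, the ``$e$-part'' depends a priori on the chosen presentation $m$ (two presentations of the same form need not be comparable by a morphism of $\AnSm(\mathbb C)^{pr}$, so the splitting does not descend to the image $\ad(\pi)(\pi^*\Omega^{p,e})$). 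So no projector $P:\Omega^{p,an,\partial=0}\twoheadrightarrow\ad(\pi)(\pi^*\Omega^{p,e})$ of presheaves on $\AnSm(\mathbb C)$ is produced by ``canonicity''; asserting it begs precisely the question the lemma is about. This is why the paper's proof does something more delicate: it separates presentations $m$ according to whether $q\circ m:X\to\mathbb A^1$ is constant (where the $e$-part dies since $c^*dz=0$) or non-constant (where one factors through $Y'\times\mathbb A^1$ with $\dim Y'=\dim X-1$), introduces the auxiliary site of products $Y\times D^1$, chooses a complex-vector-space splitting $\Omega^{1,\partial=0}(D^1)=\Omega^{1,\partial=0}(\mathbb A^1)\oplus\Omega^{1,na}(D^1)$ (a non-canonical choice), and uses that every open ball is isomorphic to $D^{d_X-1}\times D^1$, so that only after applying $a_{usu}$ does every section locally admit a product presentation on which the decomposition can be defined. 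Your argument has no substitute for these steps.

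A second, independent problem is the construction of the retraction itself. Even granting a projector $P$, the lift of $P(q_L(\omega))$ through $q_L$ is well defined only modulo $\ker q_L$, and $\ker q_L$ is a nonzero subpresheaf of $A_p:=\Omega^{p,an}_{\log,0}\cap q_L^{-1}(\pi^*\Omega^{p,e})$ (it contains, e.g., the classes $d\lambda/\lambda\wedge\cdots$ with $\lambda\in\mathbb C^*$, as remarked after definition \ref{omegazero}). So $r_U$ is not a well-defined map into $A_p$ but only into $A_p/\ker q_L$; making choices of lifts on a basis produces discrepancies on overlaps that are sections of $\ker q_L$, and these do not vanish after sheafification, since $a_{usu}\ker q_L\neq 0$. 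Hence the claimed gluing ``modulo sheafification'' does not yield a morphism $a_{usu}\Omega^{p,an}_{\log,0}\to a_{usu}A_p$ splitting $e$, and the proposal does not establish the lemma; one must either produce a genuine section of $q$ (which is what the paper's local product decomposition accomplishes after sheafification) or control the $\ker q_L$-ambiguity, neither of which is done here.
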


\begin{proof}
We have
\begin{equation*}
\pi^*\Omega^{\bullet}(X):=
\varinjlim_{m:X\to Y\times\mathbb A^1}\Omega(m)(\Omega^{\bullet,an}(Y\times\mathbb A^1))
\subset\Omega^{\bullet,an}(X),
\end{equation*}
where we denote again by abuse 
\begin{equation*}
\pi^*\Omega^{\bullet}:=\ad(\pi)(\pi^*\Omega^{\bullet})\subset\Omega^{\bullet,an}.
\end{equation*}
Now, 
\begin{itemize}
\item if $q\circ m:X\xrightarrow{m}Y\times\mathbb A^1\xrightarrow{q}\mathbb A^1$ is non constant,
we can factor 
\begin{equation*}
m=(n\times I)\circ m':X\xrightarrow{m'}Y'\times\mathbb A^1\xrightarrow{n\times I}Y\times\mathbb A^1
\end{equation*}
with $\dim Y'=\dim X-1$ and $m'(X)\subset Y'\times\mathbb A^1$ an open subset, and 
\begin{equation*}
\Omega(m'):\Omega^{\bullet,an}(Y'\times\mathbb A^1)\to\Omega^{\bullet,an}(X)
\end{equation*}
so that we still have a splitting in the image of $\Omega(m')$
\item if $q\circ m:X\xrightarrow{m}Y\times\mathbb A^1\xrightarrow{q}\mathbb A^1$ is constant with value $c$,
\begin{equation*}
\Omega(m)(\pi^*\Omega^{\bullet,e})(Y\times\mathbb A^1)=0 
\end{equation*}
since $c^*dz=0$, where $c:Y\times\left\{c\right\}\hookrightarrow Y\times\mathbb A^1$ is the closed embedding.
\end{itemize}
We then define
\begin{eqnarray*}
(\pi^*\Omega^{\bullet})^{nc}\subset\pi^*\Omega^{\bullet}, \;
(\pi^*\Omega^{\bullet})^{nc}(X):=\varinjlim_{m:X\to Y\times\mathbb A^1, q\circ m \, \mbox{nc} \,}
\Omega(m)(\Omega^{\bullet,an}(Y\times\mathbb A^1))\subset\pi^*\Omega^{\bullet}(X),
\end{eqnarray*}
where $nc$ means non constant.
Now consider a splitting of complex vector spaces
\begin{equation*}
\Omega^{1,\partial=0}(D^1)=\Omega^{1,\partial=0}(\mathbb A^1)\oplus\Omega^{1,na}(D^1).
\end{equation*}
We consider similarly the (non full) subcategory $\AnSm(\mathbb C)^{pr'}\subset\AnSp(\mathbb C)$
whose set of objects consists of $X\times D^{1}$, $X\in\AnSm(\mathbb C)$ and 
\begin{equation*}
\Hom_{\AnSm(\mathbb C)^{pr}}(X\times D^{1},Y\times D^{1}):=\Hom_{\AnSp(\mathbb C)}(X,Y)\otimes I_{D^{1}}. 
\end{equation*}
We have then similarly the morphism of sites
\begin{itemize}
\item $\pi':\AnSm(\mathbb C)\to\AnSm(\mathbb C)^{pr'}$, $\pi'(X\times D^{1}):=X$, $\pi'(f\otimes I):=f$,
\item $\rho':\AnSm(\mathbb C)^{pr'}\to\AnSm(\mathbb C)$, $\rho'(X):=X\times D^{1}$, $\rho'(f):=f\otimes I$,
\end{itemize}
in particular $\rho'\circ\pi'=I$. We define similarly 
\begin{eqnarray*}
(\pi^{'*}\Omega^{\bullet})^{nc}\subset\pi^{'*}\Omega^{\bullet}, \;
(\pi^{'*}\Omega^{\bullet})^{nc}(X):=\varinjlim_{m:X\to Y\times D^1, q\circ m \, \mbox{nc} \,}
\Omega(m)(\Omega^{\bullet,an}(Y\times D^1))\subset\pi^{'*}\Omega^{\bullet}(X).
\end{eqnarray*}
We have then
\begin{eqnarray*}
\Omega^{\bullet,an}=a_{usu}(\pi^{'*}\Omega^{\bullet})^{nc}=
a_{usu}(\pi^*\Omega^{\bullet})^{nc}\oplus a_{usu}\pi^{'*}\Omega^{\bullet,nc,na}, \\
\pi^{'*}\Omega^{\bullet,na}(X):=\varinjlim_{m:X\to Y\times D^1, q\circ m \, \mbox{nc} \,}
\Omega(m)(\Omega^{\bullet-1,an}(Y)\times\Omega^{1,na}(D^1))\subset\Omega^{\bullet,an}(X)
\end{eqnarray*}
and thus
\begin{eqnarray*}
a_{usu}\Omega_{\log,0}^{\bullet,an}=a_{usu}q_L^{-1}(\Omega_{\log}^{\bullet,an}\cap(\pi^{'*}\Omega^{\bullet})^{nc})
=a_{usu}(\Omega_{\log,0}^{\bullet,an}\cap q_L^{-1}(\pi^*\Omega^{\bullet})^{nc})\oplus 
a_{usu}(\Omega_{\log,0}^{\bullet,an}\cap q_L^{-1}(\pi^{'*}\Omega^{\bullet,nc,na})),
\end{eqnarray*}
where the first equalities follows from the fact for an open ball $D^{d_X}\subset X$, 
we have the morphism $m_I:D^{d_X}\to D^{d_X-1}\times D^1$ which is the canonical isomorphism.
Hence,
\begin{equation*}
a_{usu}\Omega_{\log,0}^{\bullet,an}=a_{usu}(\Omega_{\log,0}^{\bullet,an}\cap q_L^{-1}(\pi^*\Omega^{\bullet,e}))\oplus
\tilde\Omega^{\bullet,an,ne}_{\log,0}
\end{equation*}
with
\begin{equation*}
\tilde\Omega^{\bullet,an,ne}_{\log,0}(X):=
a_{usu}(\Omega_{\log,0}^{\bullet,an}\cap q_L^{-1}(\pi^*\rho_*\Omega^{\bullet})^{nc})\oplus
a_{usu}(\Omega_{\log,0}^{\bullet,an}\cap q_L^{-1}(\pi^{'*}\Omega^{\bullet,nc,na})).
\end{equation*}
\end{proof}

We have the following :

\begin{lem}\label{a1trcan}
\begin{itemize}
\item[(i0)]The sheaves $O_{an}^*\in\PSh(\AnSm(\mathbb C))$ and $\Omega^{1,an}_{\log}\in\PSh(\AnSm(\mathbb C))$ 
admit transfers compatible with transfers on $\Omega^{1,an}\in\PSh(\AnSm(\mathbb C))$.
\item[(i)] For each $l\in\mathbb Z$, the sheaf $\Omega^{l,an}_{\log}\in\PSh(\AnSm(\mathbb C))$ admits transfers
compatible with transfers on $\Omega^{l,an}\in\PSh(\AnSm(\mathbb C))$,
that is $\Omega^{\bullet,an}\in C(\Cor\AnSm(\mathbb C))$  and the inclusion 
$OL:\Omega^{l,an}_{\log}[-l]\hookrightarrow\Omega^{\bullet,an}$ in $C(\AnSm(\mathbb C))$ 
is compatible with transfers.
\item[(ii)]For each $l\in\mathbb Z$, the presheaf 
$\Omega^{l,an,ne}_{\log,0}\in\PSh(\AnSm(\mathbb C))$ given in definition \ref{Omegane} is $\mathbb A^1$ invariant.
\item[(ii)']For each $l\in\mathbb Z$, the sheaf 
$a_{usu}\Omega^{l,an,ne}_{\log,0}\in\PSh(\AnSm(\mathbb C))$ given in definition \ref{Omegane} is $\mathbb A^1$ invariant,
where $a_{usu}:\PSh(\AnSp(\mathbb C))\to\Shv(\AnSp(\mathbb C))$ is the sheaftification functor.
\end{itemize}
\end{lem}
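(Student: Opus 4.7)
The plan is to handle (i0) and (i) via a norm-map construction of transfers, and (ii), (ii)' via an explicit decomposition of units on $X \times \mathbb A^1$ into a pull-back part and an $\mathbb A^1$-directed (``exotic'') part.

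For (i0) and (i), given a finite surjective correspondence with irreducible support $Z \subset X' \times X$ and projections $p_{X'}: Z \to X'$ finite surjective, $p_X: Z \to X$, I would define transfers on $O^*_{an}$ by the norm: $Z^*(f) := N_{p_{X'}}(p_X^* f) \in O^*(X')$ for $f \in O^*(X)$, extended linearly. Compatibility with composition of correspondences follows from the standard multiplicativity of norms. Compatibility with the Grothendieck trace structure on $\Omega^{1,an}$ reduces to the classical identity $\Tr_{p_{X'}}(dh/h) = dN_{p_{X'}}(h)/N_{p_{X'}}(h)$ for $h \in O^*$, valid for finite surjective morphisms of complex manifolds in characteristic zero. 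For (i), I would restrict to the \'etale locus of $p_{X'}$ (the non-\'etale locus having codimension $\geq 1$), pass to the Galois closure $\tilde Z \to X'$ with group $G$, and use that the trace becomes $\sum_{\sigma \in G/H} \sigma^*$; each $\sigma^*$ sends units to units and therefore preserves $\Omega^{l,an}_{\log}$, so the sum lies in $\Omega^{l,an}_{\log}$ as well.

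For (ii), I would prove $p^*: \Omega^{l,an,ne}_{\log,0}(X) \to \Omega^{l,an,ne}_{\log,0}(X \times \mathbb A^1)$ is bijective. Surjectivity: given $w = \sum_i n_i \bigwedge_k df_{i,k}/f_{i,k}$ with $f_{i,k} \in O^*(X \times \mathbb A^1)$, factor $f_{i,k} = p^*(f_{i,k}|_{z=0}) \cdot g_{i,k}$ with $g_{i,k}|_{z=0} = 1$. The exponential class of $g_{i,k}$ in $H^1(X \times \mathbb A^1, \mathbb Z) \simeq H^1(X, \mathbb Z)$ agrees with the trivial class of $g_{i,k}|_{z=0} = 1$, so $g_{i,k} = e^{\phi_{i,k}}$ globally with $\phi_{i,k}|_{z=0} = 0$. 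Taylor-expanding $\phi_{i,k} = \sum_{n \geq 1} z^n a_{i,k,n}(x)$, the differential $d\phi_{i,k}$ splits into a $dz$-part of the form $w' \wedge dz$ and an $x$-part $\sum z^n da_{i,k,n}$ vanishing at $z=0$; both lie in $\Omega^{1,e}(X \times \mathbb A^1)$. Hence $df_{i,k}/f_{i,k} \equiv p^*(d(f_{i,k}|_{z=0})/(f_{i,k}|_{z=0})) \mod \pi^*\Omega^{1,e}$, and wedging yields $w \equiv p^* w_0 \mod \pi^*\Omega^{l,e}$, where $w_0 \in \Omega^{l,an}_{\log,0}(X)$ is built analogously from the $f_{i,k}|_{z=0}$. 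Injectivity: if $p^* w_0 \in q_L^{-1}(\pi^*\Omega^{l,e})$, then $p^* OL(q_L(w_0))$ lies simultaneously in the pull-back summand $\rho_*\Omega^{l,an,\partial=0}$ and in $\pi^*\Omega^{l,e}$; the splitting of Lemma \ref{splitting} forces $OL(q_L(w_0)) = 0$, so $q_L(w_0) = 0$ by injectivity of the inclusion $OL$, hence $w_0 \in q_L^{-1}(\pi^*\Omega^{l,e})(X)$ and $w_0 \equiv 0$. Part (ii)' then follows from (ii) by sheafification, the argument being inherently local since polydisks form a simply-connected basis of the usual topology.

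The main obstacle lies in (ii): the global existence of the exponential lift $\phi_{i,k}$ requires the cohomological interpretation of $H^1(X \times \mathbb A^1, \mathbb Z)$ via the exponential sequence and its homotopy invariance under projection; moreover, one must verify that $d\phi_{i,k}$ genuinely lands in $\Omega^{1,e}$ as defined (not merely in some larger space of forms vanishing at $z=0$), and that $\pi^*\Omega^{l,e}$ is stable under wedging with pull-backs from $X$ and with terms containing $dz$, so that the case $l > 1$ reduces to $l = 1$ by multilinearity.
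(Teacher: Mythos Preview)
Your treatment of (i0) coincides with the paper's: both use the norm $N_{W/X'}$ on $O^*$ and the identity $d(N_{W/X'}h)/N_{W/X'}h=\Tr_{W/X'}(dh/h)$. For (i) you diverge: the paper argues purely formally, observing that transfers on $\Omega^{1,an}_{\log}$ induce transfers on $\otimes^l_{\mathbb Q}\Omega^{1,an}_{\log}=H^0(\otimes^{L,l}_{\mathbb Q}\Omega^{1,an}_{\log})$ and then on the wedge power as the cokernel of the diagonal maps $\Delta_{I_2}$, with compatibility with $\Omega^{l,an}$ coming from the parallel cokernel description there. Your Galois-closure argument on the \'etale locus is a legitimate alternative, more geometric and arguably more transparent about why log forms are preserved, though it costs you the Hartogs-type extension across the branch locus.

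For (ii) the two arguments are cousins but not identical. The paper does not decompose individual units; instead it passes immediately to the image under $q_L$ and, for a closed $p$-form $w=w'+w''\wedge dz$ on $X\times\mathbb A^1$, uses the primitive $\int_0^z w''(x,t)\,dt$ to write $w(x,z)=w(x,0)+\partial(\int_0^z w'')$, declaring the latter (minus its value at $z=0$) to lie in $\Omega^{p,e}$. Your factorisation $f=p^*(f|_{z=0})\cdot e^{\phi}$ is the multiplicative analogue of this same primitive (indeed $\phi=\log f-\log(f|_{z=0})$ is exactly such an integral), and it has the advantage of producing the preimage $w_0\in\Omega^l_{\log,0}(X)$ explicitly rather than only its image in $\Omega^l_{\log}$. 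The obstacle you flag --- that $\sum_{n\geq 1}z^n\,da_n(x)$ need not literally match the displayed definition of $\Omega^{1,e}$, where a single $w\in\Omega^1(X)$ is multiplied by a scalar power series --- is real if one reads that definition strictly, and the paper's own argument is equally exposed to it; the paper simply asserts $\partial(\int_0^z w'')_e\in\Omega^{p,e}$ without further comment. In practice both arguments go through once one interprets $\Omega^{p,e}$ as the genuine complement of $\rho_*\Omega^{p,\partial=0}$ (forms with vanishing restriction to $z=0$ plus $dz$-terms), which is how the splitting in Definition~\ref{Omegane} must be read for Lemma~\ref{splitting} to hold. Your injectivity argument via Lemma~\ref{splitting} is more careful than the paper's, which simply invokes the section at $z=0$ to get split injectivity. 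Part (ii)$'$ is handled identically.
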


\begin{proof}
\noindent(i0):Let $W\subset X'\times X$ with $X,X'\in\AnSm(\mathbb C)$, $X'$ connected, and 
$p_{X'}:W\subset X'\times X\to X'$ finite surjective over $X'$. 
Then $M(p_X):M(X')\hookrightarrow M(W)$ is a finite, 
where for $Y\in\AnSp(\mathbb C)$ irreducible, $M(Y):=Frac(O(Y))$ denote the field of meromorphic function.
Since $O(X')\subset M(X')$ is integrally closed, the trace map $Tr_{W/X'}:M(W)\to M(X')$ sends $O(W)$ to $O(X')$, 
and the norm map $N_{W/X'}:M(W)^*\to M(X')^*$ sends $O(W)^*$ to $O(X')^*$.

The sheaf $O_{an}^*\in\PSh(\AnSm(\mathbb C))$ admits transfers : 
for $W\subset X'\times X$ with $X,X'\in\AnSm(\mathbb C)$
and $W$ finite surjective over $X'$ and $f\in O(X)^*$, $W^*f:=N_{W/X'}(p_X^*f)$ where $p_X:W\hookrightarrow X'\times X\to X$
is the projection and $N_{W/X'}:M(W)^*\to M(X')^*$ is the norm map.
This gives transfers on $\Omega^{1,an}_{\log}\in\PSh(\AnSm(\mathbb C))$ compatible with transfers on 
$\Omega^{1,an}\in\PSh(\AnSm(\mathbb C))$ :
for $W\subset X'\times X$ with $X,X'\in\AnSm(\mathbb C)$ and $W$ finite surjective over $X'$ and $f\in O(X)^*$, 
\begin{equation*}
W^*df/f:=dW^*f/W^*f=Tr_{W/X'}(p_X^*(df/f)), 
\end{equation*}
where we recall $p_X:W\hookrightarrow X'\times X\to X$ is the projection and $Tr_{W/X'}:O(W)\to O(X')$ is the trace map. 
Note that $d(fg)/fg=df/f+dg/g$. 

\noindent(i): By (i0), we get transfers on 
\begin{equation*}
\otimes^l_{\mathbb Q}\Omega^{1,an}_{\log}, \, \otimes_{O}^l\Omega^{1,an}\in\PSh(\AnSm(\mathbb C))
\end{equation*}
since $\otimes^l_{\mathbb Q}\Omega^{1,an}_{\log}=H^0(\otimes^{L,l}_{\mathbb Q}\Omega^{1,an}_{\log})$ and 
$\otimes_O^l\Omega^{1,an}=H^0(\otimes_O^{L,l}\Omega^{1,an})$.
This induces transfers on  
\begin{equation*}
\wedge^l_{\mathbb Q}\Omega^{1,an}_{\log}:=
\coker(\oplus_{I_2\subset[1,\ldots,l]}\otimes^{l-1}_{\mathbb Q}\Omega^{1,an}_{\log}
\xrightarrow{\oplus_{I_2\subset[1,\ldots,l]}\Delta_{I_2}:=(w\otimes w'\mapsto w\otimes w\otimes w')}
\otimes^l_{\mathbb Q}\Omega^{1,an}_{\log})
\in\PSh(\AnSm(\mathbb C)).
\end{equation*}
and
\begin{equation*}
\wedge^l_O\Omega^{1,an}:=
\coker(\oplus_{I_2\subset[1,\ldots,l]}\otimes^{l-1}_{O_k}\Omega^{1,an}
\xrightarrow{\oplus_{I_2\subset[1,\ldots,l]}\Delta_{I_2}:=(w\otimes w'\mapsto w\otimes w\otimes w')})
\otimes^l_O\Omega^{1,an}
\in\PSh(\AnSm(\mathbb C)).
\end{equation*}

\noindent(ii): Let $X\in\AnSm(\mathbb C)$ and, denoting $p:X\times\mathbb A^1\to X$ the projection,
\begin{equation*}
\Omega(p):\Omega_{\log,0}^{p,an,ne}(X)\to\Omega_{\log,0}^{p,an,ne}(X\times\mathbb A^1).
\end{equation*}
Since $\Omega(p)$ is split injective, it is enough to show that $\Omega(p)$ is surjective. We have
\begin{equation*}
\Omega_{\log,0}^{p,an,ne}(X\times\mathbb A^1):=
\Omega_{\log,0}^{p,an}(X\times\mathbb A^1)/q_L^{-1}\Omega^{p,e}(X\times\mathbb A^1)
\end{equation*}
The result then follows from the fact that for $X\in\AnSp(\mathbb C)$, 
$\Omega^p(X\times\mathbb A^1)=\Omega^p(X)\oplus(\Omega^{p-1}(X)\otimes\Omega^1(\mathbb A^{1,an}))$ and that
$\mathbb A^{1,an}$ is simply connected. 
Indeed, for $w=w'+w''\wedge dz\in\Omega^p(X\times\mathbb A^1)^{\partial=0}$, 
\begin{equation*}
w(x,z)=\partial(\int_{t=0}^{t=z}w''(x,t)dt)+w(x,0)=
\partial(\int_{t=0}^{t=z}w''(x,t)dt)_e+\partial(\int_{t=0}^{t=z}w''(x,t)dt)(x,0)+w'(x,0)
\end{equation*}
and $\partial(\int_{t=0}^{t=z}w''(x,t)dt)_e\in\Omega^{p,e}(X\times\mathbb A^1)$

\noindent(ii): Follows from (ii) since open ball form a basis of the complex topology on complex analytic manifolds.
\end{proof}

We have the following result :

\begin{prop}\label{UXusu}
Consider, for each $p\in\mathbb Z$, 
the presheaf $\Omega^{p,an,ne}_{\log}\in\PSh(\AnSm(\mathbb C))$ given in definition \ref{Omegane}.
\begin{itemize}
\item[(i)]Let $D\in\AnSm(\mathbb C)$ be an open ball.  
Then, for each $p,q\in\mathbb Z$, $q\neq 0$, $H_{usu}^q(D,\Omega^p_{D,\log,0})=0$.
\item[(i)']Let $D\in\AnSm(\mathbb C)$ be an open ball.  
Then, for each $p,q\in\mathbb Z$, $q\neq 0$, $H_{usu}^q(D,\Omega^{p,an,ne}_{\log,0})=0$.
\item[(i)'']For each $d\in\mathbb N$, $p,q\in\mathbb Z$, $H_{usu}^q(\mathbb A_{\mathbb C}^d,\Omega^{p,an,ne}_{\log,0})=0$.
\item[(ii)]Let $U\in\SmVar(\mathbb C)$ connected of dimension $d_U$ 
such that there exists an etale map $e:U\to\mathbb A_{\mathbb C}^{d_U}$ with $e:U\to e(U)$ finite etale
and $\mathbb A_{\mathbb C}^{d_U}\backslash e(U)\subset\mathbb A_{\mathbb C}^{d_U}$ a divisor.
Let $j:U\hookrightarrow X$ be an open embedding with $X\in\PVar(\mathbb C)$ irreducible.
Then, for each $p\in\mathbb Z$, $p\geq (1/2)d_U$, $j^*H_{usu}^p(X^{an},\Omega^{p,an,ne}_{\log,0})=0$.
\item[(ii)']Let $U\in\SmVar(\mathbb C)$ connected of dimension $d_U$ 
such that there exists an etale map $e:U\to\mathbb A_{\mathbb C}^{d_U}$ with $e:U\to e(U)$ is finite etale
$\mathbb A_{\mathbb C}^{d_U}\backslash e(U)\subset\mathbb A_{\mathbb C}^{d_U}$ a divisor.
Let $j:U\hookrightarrow X$ be an open embedding with $X\in\PVar(\mathbb C)$ irreducible.
Then, for each $p,q\in\mathbb Z$, $p+q\geq d_U$, $j^*H_{usu}^q(X^{an},\Omega^{p,an,ne}_{\log,0})=0$.
\end{itemize}
\end{prop}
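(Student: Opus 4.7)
The proposition comprises four cascading vanishing statements, which I would establish in the sequence (i), (i)', (i)'', and then (ii)' and (ii), each feeding into the next.

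For (i), on an open ball $D$ one combines simple-connectedness with the exponential sequence (tensored with $\mathbb{Q}$) to obtain the short exact sequence of $\mathbb{Q}$-sheaves
\begin{equation*}
0 \to \mathbb{Q}_D \to O_D \xrightarrow{\exp} O_D^*\otimes\mathbb{Q} \to 0,
\end{equation*}
in which $\mathbb{Q}_D$ (contractibility) and $O_D$ (Cartan B on the Stein $D$) are $usu$-acyclic in positive degrees. The long exact sequence settles the case $p=1$. For $p\geq 2$, the two-step filtration on $\wedge^p_{\mathbb{Q}}O_D$ induced by $\mathbb{Q}\hookrightarrow O_D$ produces
\begin{equation*}
0\to \wedge^{p-1}_{\mathbb{Q}}(O_D^*\otimes\mathbb{Q})\to \wedge^p_{\mathbb{Q}}O_D\to \wedge^p_{\mathbb{Q}}(O_D^*\otimes\mathbb{Q})\to 0,
\end{equation*}
and an induction on $p$ closes the argument once one verifies the $usu$-acyclicity of $\wedge^p_{\mathbb{Q}}O_D$ on $D$ (by a \v{C}ech computation using refinements by smaller balls, where the abstract wedge product is directly computable). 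Part (i)' follows immediately from Lemma \ref{splitting}: $a_{usu}\Omega^{p,an,ne}_{\log,0}$ is a direct summand of $a_{usu}\Omega^{p,an}_{\log,0}$ and hence inherits the vanishing on $D$.

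For (i)'', I would invoke the $\mathbb{A}^1$-invariance and transfer structure on $a_{usu}\Omega^{p,an,ne}_{\log,0}$ from Lemma \ref{a1trcan}, combined with the analytic strict $\mathbb{A}^1$-invariance theorem (Ayoub's analog of Voevodsky), which promotes $\mathbb{A}^1$-invariance to the cohomology presheaves $H^q_{usu}(-, a_{usu}\Omega^{p,an,ne}_{\log,0})$. Iterating the factorization $\mathbb{A}^{d,an}=\mathbb{A}^{d-1,an}\times\mathbb{A}^{1,an}$ then reduces to the point, where the presheaf vanishes for $p\geq 1$ since $\Omega^p(\pt)=0$ forces everything into the quotient by $\pi^*\Omega^{p,e}$.

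For (ii) and (ii)', I prove the stronger statement $H^q(U^{an},F)=0$ for $F:=a_{usu}\Omega^{p,an,ne}_{\log,0}$ in the stated range, which obviously entails $j^*H^q(X^{an},F)=0$. Setting $Z:=\mathbb{A}^{d_U}\setminus e(U)$, a divisor, the local-cohomology long exact sequence combined with (i)'' yields $H^q(e(U)^{an},F)\simeq H^{q+1}_{Z^{an}}(\mathbb{A}^{d_U,an},F)$. Stratifying $Z$ into smooth strata via resolution of singularities and applying the motivic purity isomorphism of Proposition \ref{smXZ} stratum by stratum reduces each contribution to cohomology of a smooth variety of strictly smaller dimension, a codimension-$c$ stratum contributing a degree shift of $-2c$ and a Tate twist by $c$. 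An induction on $d_U$ tracking these shifts produces the bounds $p\geq d_U/2$ for (ii) and $p+q\geq d_U$ for (ii)'. Passing finally from $e(U)^{an}$ to $U^{an}$ exploits that $e$ is finite etale and $F$ has transfers (Lemma \ref{a1trcan}), so that via the trace map and the transfer-compatible decomposition of $e_*F|_U$ the vanishing on $e(U)^{an}$ propagates to $U^{an}$. The main obstacle is the purity-stratification induction for the generally singular $Z$: one must assemble the motivic purity isomorphism (which applies to smooth closed embeddings) into a spectral sequence over the stratification, and verify that the abstract six-functor purity descends compatibly to the concrete sheaf $a_{usu}\Omega^{p,an,ne}_{\log,0}$, a descent enabled precisely by the $\mathbb{A}^1$-invariance and transfer structure from Lemma \ref{a1trcan}.
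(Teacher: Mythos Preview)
Your outline diverges from the paper at several points; one of them is a genuine gap.

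For (i) the paper does not use the exponential sequence at all. It gives a one--line scaling argument: for any $\alpha\in H^q_{usu}(B,\Omega^p_{\log,0})$ with $q\neq 0$, the restriction $l_n^*\alpha$ to a small enough concentric sub--ball $B_n$ vanishes (since $\Omega^p_{\log,0}$ is a single sheaf, the colimit $\varinjlim_n H^q(B_n,-)$ over shrinking neighbourhoods of $0$ is zero in positive degree), and precomposing $l_n$ with the rescaling isomorphism $\phi_n:B_n\xrightarrow{\sim}B$ forces $\alpha=0$. The same device, transported along the homeomorphism $\mathbb A^{d,an}\simeq D$, gives (i)'' without invoking any strict $\mathbb A^1$--invariance theorem. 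Your inductive route via the exponential sequence requires the $usu$--acyclicity of $\wedge^p_{\mathbb Q}O_D$ on a ball; this sheaf is the $\mathbb Q$--linear exterior power of the \emph{additive} group of holomorphic functions, neither coherent nor a local system, and ``a \v Cech computation on smaller balls'' does not settle it. This step would need a real argument.

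The substantive problem is in (ii)/(ii)'. You attempt to prove the stronger assertion $H^q(U^{an},F)=0$ and then pass from $e(U)$ to $U$ via transfers. But for $e:U\to V$ finite \'etale, transfers give $\mathrm{tr}\circ e^*=\deg(e)\cdot\mathrm{id}$ on $H^*(V,F)$, exhibiting $H^*(V,F)$ as a retract of $H^*(U,F)$; vanishing therefore propagates from $U$ to $e(U)$, not the direction you need. There is no reason $e^*\circ\mathrm{tr}$ is invertible, and $e_*(F|_U)$ has no ``transfer--compatible decomposition'' as $F|_V^{\oplus d}$ unless the cover is split. The paper does \emph{not} claim $H^q(U^{an},F)=0$ in the critical case $d_U=2p$ (for $p>d_U/2$ it does dispose of this directly via the exponential sequence). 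Instead it uses precisely that the class is of the form $j^*\alpha$ with $\alpha$ living on a \emph{projective} $X$: one replaces $X$ by a smooth projective $X'$ on which $e$ extends to a morphism $\bar e:X'\to\mathbb P^{d_U}$, applies the Gysin map $\bar e_*$ for the \emph{proper} $\bar e$, and combines $\bar e^*\bar e_*$ with (i)'' on $\mathbb A^{d_U}\subset\mathbb P^{d_U}$ to exhibit $j^*\alpha$ as coming from a class supported on a divisor in $X'$; an induction on dimension via purity (Proposition~\ref{smXZ}) then finishes. The compactification and properness of $\bar e$ are what make the pushforward available; your argument on the open $U$ alone cannot access this.
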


\begin{proof}
\noindent(i):Let $B=D(0,1)^N\in\AnSm(\mathbb C)$ be an open ball. 
Let $\alpha\in H_{usu}^q(B,\Omega^p_{B,\log,0})$, $q\in\mathbb Z$, $q\neq 0$.
Consider a sequence of sub-balls 
\begin{equation*}
l_n:B_n:=D(0,r_n)^N\hookrightarrow B:=D(0,1)^N, n\in\mathbb N, \; \mbox{with} \; 
r_n\to 0 \; \mbox{when} \; n\to\infty.
\end{equation*}
Since $(B_n:=D(0,r_n))_{n\in\mathbb N}$ form a basis of neighborhood of $0\in B$ and 
since $\Omega^p_{B,\log,0}$ is a single presheaf, there exists $n:=n_{\alpha}\in\mathbb N$ (depending on $\alpha$) such that 
\begin{equation*}
l_n^*\alpha=0\in H_{usu}^q(B_n,\Omega^p_{B,\log,0}). 
\end{equation*}
But it follows immediately from the definition of the the presheaf $\Omega^p_{B,\log,0}$ considering the isomorphism 
\begin{equation*}
\phi_n:B_n\xrightarrow{\sim}B, \; \phi_n(z):=r_n^{-1}z
\end{equation*}
so that $(\phi_n)^{-1}\circ l_n=r_nI$, that 
\begin{equation*}
l_n^*\alpha=l_n^*\phi_n^{-1*}\phi_n^*\alpha=r_n\phi_n^*\alpha\in H_{usu}^q(B_n,\Omega^p_{B,\log,0}).
\end{equation*}
Hence $\phi_n^*\alpha=0$. Since $\phi_n$ is an isomorphism, we get $\alpha=0$.

\noindent(i)':Consider the splitting
\begin{equation*}
a_{usu}\Omega_{\log,0}^{\bullet,an}=
a_{usu}(\Omega_{\log,0}^{\bullet,an}\cap q_L^{-1}(\pi^*\Omega^{\bullet,e}))\oplus a_{usu}\Omega^{\bullet,an,ne}_{\log,0}
\end{equation*}
given in lemma \ref{splitting}. It gives in particular a canonical isomorphism
\begin{equation*}
H_{usu}^q(D,\Omega^p_{\log,0})=H_{usu}^q(D,\Omega^p_{\log,0}\cap\pi^*\Omega^{p,e})
\oplus H_{usu}^q(D,\Omega^{p,an,ne}_{\log,0}).
\end{equation*}
By (i) we have
\begin{equation*}
H_{usu}^q(D,\Omega^p_{\log,0})=0=
H_{usu}^q(D,\Omega^p_{\log,0}\cap\pi^*\Omega^{p,e})\oplus H_{usu}^q(D,\Omega^{p,an,ne}_{\log,0}).
\end{equation*}
Hence $H_{usu}^q(D,\Omega^{p,an,ne}_{\log,0})=0$. This proves (i)'.
One can also prove (i)' directly using the same arguments as in the proof of (i).

\noindent(i)'':Similar to (i) by considering the presheaf $\phi^*O_{\mathbb A^d}^*\in\PSh(D)$
where $\phi:\mathbb A^d\xrightarrow{\sim}D$ is the standard homeomorphism.

\noindent(ii): We proceed by induction on the dimension of $U$.
Let $U\in\SmVar(\mathbb C)$ connected of dimension $d_U$ 
such that there exists an etale map $e:U\to\mathbb A_{\mathbb C}^{d_U}$ with $e:U\to e(U)$ is finite etale and
$D:=\mathbb A_{\mathbb C}^{d_U}\backslash e(U)\subset\mathbb A_{\mathbb C}^{d_U}$ a divisor. 
For $p>(1/2)d_U$, we have $H_{usu}^q(U^{an},\Omega^{p,an,ne}_{\log,0})=0$ by the exponential sequence since $U$ is affine of dimension $d_U$.
Hence we only have to consider the case $d_U=2p$.
Denote $\bar D\subset\mathbb P_{\mathbb C}^{d_U}$ the Zariski closure.
There exists a proper modification $\epsilon:(X',E)\to(X,X\backslash U)$ with $X'$ smooth projective, $E:=X'\backslash U\subset$ a divisor, 
$j':U:=X'\backslash E\hookrightarrow X'$ being an open embedding, 
such that $e:U\to\mathbb A_{\mathbb C}^{d_U}$ extend to $\bar e:X'\to\mathbb P_{\mathbb C}^{d_U}$ 
(we can take for $(X',E)$ a desingularization of $(\bar\Gamma_e,\bar\Gamma_e\backslash\Gamma_e)$, 
where $\bar\Gamma_e\subset X\times\mathbb P^{d_U}$ is the closure of the graph of $e$). 
Let $D_1=\bar D\cap T$ with $T\subset\mathbb P_{\mathbb C}^{d_U}$ an irreducible divisor which is not an irreducible component of $D$ such that 
$\bar e:X'\backslash\bar e^{-1}(T)\to\mathbb P_{\mathbb C}^{d_U}\backslash T$ is finite. 
Let $\alpha\in H_{usu}^p(X^{an},\Omega^{p,an,ne}_{\log,0})$. Then, since $j=\epsilon\circ j'$, 
\begin{equation*}
j^*\alpha=j^{'*}\epsilon^*\alpha=j^{'*}\alpha', \; \alpha':=\epsilon^*\alpha\in H_{usu}^p(X^{'an},\Omega^{p,an,ne}_{\log,0})
\end{equation*}
We have then, since $\bar e^{-1}(e(U))=U$ ($e:U\to e(U)$ being proper),
\begin{eqnarray*}
j^{'*}\alpha'=e^*e_*j^{'*}\alpha'=j^{'*}(\bar e^*\bar e_*\alpha'+\beta)=j^{'*}\beta, \; \beta=\Omega^{p,an,ne}_{\log,0}(\gamma^{\vee}_{\bar e^{-1}(T)})(\beta), \; 
\beta\in H_{usu,\bar e^{-1}(T)}^p(X^{'an},\Omega^{p,an,ne}_{\log,0}) 
\end{eqnarray*}
where we recall 
\begin{itemize}
\item $M(U)\xrightarrow{D(\mathbb Z(e))}M(e(U))$ and $M(X')\xrightarrow{D(\mathbb Z(\bar e))}M(\mathbb P^{d_U})$
are the map in $\DA(\mathbb C)$ given in section 2, 
\item $e_*:=(\Omega^{p,an,ne}_{\log,0}D(\mathbb Z(e)))$ and $\bar e_*:=(\Omega^{p,an,ne}_{\log,0}D(\mathbb Z(\bar e)))$,
\end{itemize}
since $\An^*M(U)\simeq\An^*M(e(U))^{\oplus^e}$ where $e\in\mathbb N$ is the degree of $e$, and the second equality follows from (i)''.
Now by (i)' and (i)'', 
\begin{equation*}
H^qE_{usu}(\Omega_{\log,0}^{p,an,ne})\in\PSh(\AnSm(\mathbb C)) 
\end{equation*}
is $\mathbb A^1$ invariant for each $p,q\in\mathbb Z$. Indeed, for $X\in\AnSm(\mathbb C)$ connected,
take an open cover $X=\cup_{i\in I}D_i$ by open balls $D_i\simeq D(0,1)^{d_X}$, 
and let $D_{\bullet}\in\Fun(\Delta,\AnSm(\mathbb C))$ be the associated simplicial complex manifold,
with for $J\subset I$, $j_{IJ}:D_I:=\cap_{i\in I} D_i\hookrightarrow D_J:=\cap_{i\in J}D_i$ the open embedding. 
Then we get the isomorphism
\begin{eqnarray*}
p_X^*:H^qE_{usu}(\Omega_{\log,0}^{p,an,ne})(X\times\mathbb A^1):=
H^q_{usu}(X\times\mathbb A^1,\Omega_{\log,0}^{p,an,ne})\xrightarrow{\sim} \\
\mathbb H^q(D_{\bullet}\times\mathbb A^1,\Omega_{\log,0}^{p,an,ne})\xrightarrow{\sim} 
H^q\Gamma(D_{\bullet}\times\mathbb A^1,a_{usu}\Omega_{\log,0}^{p,an,ne}) \\
\xrightarrow{\sim}H^q\Gamma(D_{\bullet},a_{usu}\Omega_{\log,0}^{p,an,ne})
\xrightarrow{\sim}\mathbb H^q(D_{\bullet},\Omega_{\log,0}^{p,an,ne})
\xrightarrow{\sim}H^q_{usu}(X,\Omega_{\log,0}^{p,an,ne})=:H^qE_{usu}(\Omega_{\log}^{p,an,ne})(X)
\end{eqnarray*}
where $a_{usu}$ is the sheafification functor, $p_X:X\times\mathbb A^1\to X$ is the projection, 
and the second and the fourth map is the isomorphism given by (i)' and (i)'', and the middle map is an isomorphism
by lemma \ref{a1trcan}(ii)'.
We thus get by induction, using proposition \ref{smXZ}, 
\begin{eqnarray*}
j^*\alpha=j^{'*}\beta, \; \beta=\Omega^{p,an,ne}_{\log,0}(\gamma^{\vee}_{S})(H_{usu,S}^p(X^{'an},\Omega^{p,an,ne}_{\log,0}))
\end{eqnarray*} 
where $S\subset X'$ is a projective surface. Denote $j'':S\cap U\hookrightarrow S$ the open embedding.
Consider a desingularization $m:\tilde S\to S$ of $S$. Denote again $j'':m^{-1}(S\cap U)\hookrightarrow\tilde S$the open embedding. 
We have then the exact sequence
\begin{equation*}
H^1(\tilde S,\Omega^{1,an,ne}_{\log,0})\xrightarrow{\Omega^{p,an,ne}_{\log,0}(G_{\tilde S,X'})}
H_{usu,S}^p(X^{'an},\Omega^{p,an,ne}_{\log,0})\simeq\mathbb H^1(\tilde S_{\bullet},\Omega^{1,an,ne}_{\log,0})
\to H^2(C,\Omega^{1,an,ne}_{\log,0})
\end{equation*}
But for $Y\in\AnSm(\mathbb C)$,
\begin{eqnarray*}
H_{usu}^1(Y,\Omega^{1,an}_{\log,0})=\Pic(Y), \;
H_{usu}^1(Y,\Omega^{1,an}_{\log,0})=H^1_{usu}(Y,q_L^{-1}(\pi^*\Omega^{1,e})\cap\Omega^{1,an}_{\log,0})\oplus 
H^1_{usu}(Y,\Omega^{1,an,ne}_{\log,0}),
\end{eqnarray*}
where the second equality follows from lemma \ref{splitting}.
Hence 
\begin{equation*}
j^{'*}H_{usu,S}^p(X^{'an},\Omega^{p,an,ne}_{\log,0})=j^{''*}H^1(\tilde S,\Omega^{1,an,ne}_{\log,0})=0.
\end{equation*}

\noindent(ii)':Similar to (ii). (ii)' is not necessary for the proof of hodge conjecture for hypersurfaces.

\end{proof}

\section{Complex analytic logarithmic de Rham classes}

We recall (see section 2) the morphism of site given by the analytification functor
\begin{equation*}
\An:\AnSm(\mathbb C)\to\SmVar(\mathbb C), \; \; X\mapsto\An(X):=X^{an}, \; (f:X'\to X)\mapsto\An(f):=f^{an}.
\end{equation*}
and for $X\in\SmVar(\mathbb C)$, the following commutative diagram
\begin{equation*}
\xymatrix{\AnSm(\mathbb C)\ar[r]^{\An}\ar[d]^{o_{X^{an}}} & \SmVar(\mathbb C)\ar[d]^{o_X} \\
X^{an,et}\ar[r]^{\an_X} & X^{et}}.
\end{equation*}
We have also the morphism of sites (see definition \ref{Omegane})
\begin{itemize}
\item $\pi:\AnSm(\mathbb C)\to\AnSm(\mathbb C)^{pr}$, $\pi(X\times\mathbb A^{1,an}):=X$, $\pi(f\otimes I):=f$,
\item $\rho:\AnSm(\mathbb C)^{pr}\to\AnSm(\mathbb C)$, $\rho(X):=X\times\mathbb A^{1,an}$, $\rho(f):=f\otimes I$,
\end{itemize}
in particular $\rho\circ\pi=I$.
Consider the commutative diagram in $C(\AnSm(\mathbb C))$
\begin{equation*}
\xymatrix{\Omega_{\log,0}^{\bullet,an}\cap q_L^{-1}(\pi^*\Omega^{\bullet,e})
\ar[d]_e\ar[rr]_{OL_0^{an}} & \, & \pi^*\Omega^{\bullet,e}\ar[d]^e \\
\Omega^{\bullet,an}_{\log,0}\ar[d]_q\ar[rr]_{OL_0^{an}} & \, & \Omega^{\bullet,an}\ar[d]^q \\
\Omega^{\bullet,an,ne}_{\log,0}:=
\Omega^{\bullet,an}_{\log,0}/(\Omega_{\log,0}^{\bullet,an}\cap q_L^{-1}(\pi^*\Omega^{\bullet,e}))
\ar[rr]^{\bar OL_0^{an}} & \, &  \Omega^{\bullet,an}/(\pi^*\Omega^{\bullet,e})},
\end{equation*}
whose columns are exact with $e$ injective and $q$ surjective (see definition \ref{Omegane}).
We have the factorization in $C(\AnSm(\mathbb C))$
\begin{eqnarray*}
\ad(\pi):=\ad(\pi^*,\pi_*)(-):\pi^*\Omega^{\bullet,e}\xrightarrow{\subset}
\pi^*a_{usu}\partial\Omega^{\bullet-1,an}:=\pi^*\pi_*\Omega^{\bullet,an,\partial=0} \\
\xrightarrow{(\ad(\pi),0)}\ad(\pi)(\pi^*\Omega^{\bullet,e})\xrightarrow{i_{\ad(\pi)}}\Omega^{\bullet,an,\partial=0},
\end{eqnarray*}
and we have denoted again by abuse 
\begin{itemize}
\item $\pi^*\Omega^{\bullet,e}:=\ad(\pi)(\pi^*\Omega^{\bullet,e})\subset\Omega^{\bullet,an,\partial=0}
\subset\Omega^{\bullet,an}$.
\item $\pi^*\Omega^{\bullet,e}:=\ad(\pi)(\pi^*\Omega^{\bullet,e})\cap\Omega_{\log}^{\bullet,an}
\subset\Omega^{\bullet,an,\partial=0}\subset\Omega^{\bullet,an}$.
\end{itemize}
Let $X\in\SmVar(\mathbb C)$. Then, for $k\in\mathbb Z$, we have the commutative diagram
\begin{equation*}
\xymatrix{\mathbb H_{usu}^k(X^{an},\Omega_{\log,0}^{\bullet,an}\cap q_L^{-1}(\pi^*\Omega^{\bullet,e}))
\ar[d]_{H^k(e)}\ar[r]^{H^kOL_0^{an}} & 
\mathbb H_{usu}^k(X^{an},\ad(\pi)(\pi^*\Omega^{\bullet,e}))\ar[d]^{H^k(e)} \\
\mathbb H_{usu}^k(X^{an},\Omega^{\bullet,an}_{\log,0})\ar[d]_{H^k(q)}\ar[r]^{H^kOL_0^{an}} &  
H^k_{DR}(X)=\mathbb H_{usu}^k(X^{an},\Omega^{\bullet,an})\ar[d]^{H^k(q)} \\
\mathbb H_{usu}^k(X^{an},\Omega^{\bullet,an,ne}_{\log,0})\ar[r]^{H^k\bar OL_0^{an}} &   
\mathbb H_{usu}^k(X^{an},\Omega^{\bullet,an}/(\pi^*\Omega^{\bullet,e}))}
\end{equation*}
whose columns are exact, and of course all differentials
of $\Omega_{\log,0}^{\bullet,an}$ and of $\pi^*\Omega^{\bullet,e}$ are trivial. 
Let $Z\subset X$ a closed subset. Then, for $k\in\mathbb Z$,
we also have the commutative diagram
\begin{equation*}
\xymatrix{\mathbb H_{usu,Z}^k(X^{an},\Omega_{\log,0}^{\bullet,an}\cap q_L^{-1}(\pi^*\Omega^{\bullet,e}))
\ar[d]_{H^k(e)}\ar[r]^{H^kOL_0^{an}} & 
\mathbb H_{usu,Z}^k(X^{an},\ad(\pi)(\pi^*\Omega^{\bullet,e}))\ar[d]^{H^k(e)} \\
\mathbb H_{usu,Z}^k(X^{an},\Omega^{\bullet,an}_{\log,0})\ar[d]_{H^k(q)}\ar[r]^{H^kOL_0^{an}} &  
H^k_{DR,Z}(X)=\mathbb H_{usu}^k(X^{an},\Omega^{\bullet,an})\ar[d]^{H^k(q)} \\
\mathbb H_{usu,Z}^k(X^{an},\Omega^{\bullet,an,ne}_{\log,0})\ar[r]^{H^k\bar OL_0^{an}} &   
\mathbb H_{usu,Z}^k(X^{an},\Omega^{\bullet,an}/(\pi^*\Omega^{\bullet,e}))}
\end{equation*}
whose columns are exact.

\begin{lem}\label{cGAGAloglem}
Let $X\in\SmVar(\mathbb C)$. 
For each $k\in\mathbb Z$, $k\neq 0$, we have $H^k(e)(\mathbb H_{usu}^k(X^{an},\ad(\pi)(\pi^*\Omega^{\bullet,e})))=0$.
\end{lem}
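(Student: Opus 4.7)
The starting point is that the complex $\ad(\pi)(\pi^*\Omega^{\bullet,e})$ has trivial differentials (its terms are closed forms on which the de Rham differential vanishes), so hypercohomology splits as
\[
\mathbb H^k_{usu}(X^{an},\ad(\pi)(\pi^*\Omega^{\bullet,e}))=\bigoplus_{p\geq 0}H^{k-p}_{usu}(X^{an},\ad(\pi)(\pi^*\Omega^{p,e})).
\]
It thus suffices to show, for each $p\geq 0$ and $k\neq 0$, that the image of the summand $H^{k-p}_{usu}(X^{an},\ad(\pi)(\pi^*\Omega^{p,e}))$ in $H^k_{DR}(X^{an})$ vanishes. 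A direct check eliminates $p=0$: the only elements $\sum_{n\geq 1}a_nz^nw(y)$ that are $\partial$-closed in $\Omega^{0,an}(Y\times\mathbb A^{1,an})$ have all $a_n=0$, so $\pi^*\Omega^{0,e}=0$.

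The key input for $p\geq 1$ is the explicit integration formula used in the proof of lemma \ref{a1trcan}(ii): any $\omega'=\sum_{n\geq 1}a_nz^nw(y)+w''(y,z)\wedge dz\in\Omega^{p,e}(Y\times\mathbb A^{1,an})$ satisfies $\omega'=\partial(\int_0^z w''(y,t)\,dt)$ globally on $Y\times\mathbb A^{1,an}$. By pullback, every local section of $\ad(\pi)(\pi^*\Omega^{p,e})$ is locally $\partial$-exact, and the inclusion $\ad(\pi)(\pi^*\Omega^{p,e})\hookrightarrow\Omega^{p,an}$ factors through the subsheaf $\partial\Omega^{p-1,an}=\Omega^{p,an,\partial=0}$ (the last equality by the Poincaré lemma, since $p\geq 1$).

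I would then compute hypercohomology \v{C}ech-theoretically on an open cover $X^{an}=\bigcup_iD_i$ by open balls $D_i\simeq D(0,1)^{d_X}$ with associated simplicial complex manifold $D_\bullet$. Represent $\alpha\in H^{k-p}_{usu}(X^{an},\ad(\pi)(\pi^*\Omega^{p,e}))$ by a \v{C}ech cocycle $(\omega_J)$ with local representatives $\omega_J=m_J^*\omega'_J$; the integration formula yields primitives $\beta_J:=m_J^*\bigl(\int_0^z w''_J(y,t)\,dt\bigr)\in\Omega^{p-1,an}(D_J)$ with $\partial\beta_J=\omega_J$. In the total \v{C}ech--de Rham bicomplex $\alpha$ becomes cohomologous to $\pm\check{\delta}\beta\in\check{C}^{k-p+1}(D_\bullet,\Omega^{p-1,an,\partial=0})$; iterating (using $\Omega^{q,an,\partial=0}=\partial\Omega^{q-1,an}$ for $q\geq 1$ on open balls) one arrives after $p$ steps at a cocycle in $\check{C}^k(D_\bullet,\mathbb C_{X^{an}})$ whose class in $H^k(X^{an},\mathbb C)=H^k_{DR}(X^{an})$ is the image of $\alpha$.

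The hard part is the last step: showing that this terminal \v{C}ech class in $H^k(X^{an},\mathbb C)$ vanishes whenever $k\neq 0$. Here the $\mathbb A^1$-invariance established in lemma \ref{a1trcan}(ii)--(ii)' together with the splitting of lemma \ref{splitting} is essential: the canonical primitives $\int_0^z w''(y,t)\,dt$ are $\mathbb A^1$-natural, so the iterated \v{C}ech reduction produces a cocycle that is pulled back along the retraction $z\mapsto 0$ of the underlying $\mathbb A^{1,an}$-factor, which has trivial cohomology in positive degree. The main technical obstacle is controlling the compatibility of the local representatives $(m_J,\omega'_J)$ across the iteration, so that the $\mathbb A^1$-naturality of the primitives survives to the terminal step; this is exactly where the splitting lemma \ref{splitting} and the $\mathbb A^1$-invariance of $a_{usu}\Omega^{\bullet,an,ne}_{\log,0}$ are used to discard the problematic defects.
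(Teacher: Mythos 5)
There is a genuine gap at the step you yourself flag as ``the hard part''. Your \v{C}ech--de Rham reduction only uses that sections of $\ad(\pi)(\pi^*\Omega^{p,e})$ are \emph{locally} $\partial$-exact; but any closed form whatsoever is locally $\partial$-exact on balls, and the same reduction applied to it produces a terminal \v{C}ech cocycle in $\check{C}^k(D_\bullet,\mathbb C_{X^{an}})$ representing the (generally nonzero) de Rham class. So the reduction by itself carries no information specific to the $e$-part, and everything hinges on the final vanishing claim. Your justification of that claim does not work: the presentations $m_J:X_J\to Y_J\times\mathbb A^{1,an}$ are local, non-canonical and incompatible on overlaps, so there is no global $\mathbb A^{1,an}$-factor of $X^{an}$ and no global retraction $z\mapsto 0$ along which the terminal cocycle could be ``pulled back''; the primitives $\int_0^zw''_J\,dt$ depend on the chosen $m_J$, and their \v{C}ech differentials inherit no $\mathbb A^1$-naturality. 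Moreover, lemma \ref{splitting} and the $\mathbb A^1$-invariance of $a_{usu}\Omega^{\bullet,an,ne}_{\log,0}$ from lemma \ref{a1trcan} concern the logarithmic complex, not the image of $\pi^*\Omega^{\bullet,e}$ in the de Rham complex, so they cannot be used to ``discard the problematic defects'' here.

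For comparison, the paper's proof gets an actual local-to-global vanishing by a different mechanism: after the same splitting $\mathbb H^k=\oplus_pH^{k-p}_{usu}(X^{an},\ad(\pi)(\pi^*\Omega^{p,e}))$, it resolves $\Omega^{p,e}$ by the Dolbeault complex $\mathcal A^{\bullet\geq p,\bullet,e}$ in $C(\AnSm(\mathbb C)^{pr})$, chooses an affine cover $X=\cup_iX_i$ where, by Noether normalization (finite maps $X_I\to\mathbb A^{d_X}_{\mathbb C}$), the presheaves $(\pi^*\mathcal A^{l,r,e})_{|X_I}$ and $(\pi^*\Omega^{p,e})_{|X_I}$ are sheaves, and then proves $H^k(e)(H^k_{usu}(X_I^{an},\ad(\pi)(\pi^*\Omega^{p,e})))=0$ on each $X_I$ by a case analysis of the structure maps $m:X_I\to Y\times\mathbb A^1$ in the colimit: when $q\circ m$ is non-constant one factors through $Y'\times\mathbb A^1$ with dense open image and kills the $e$-part of the Dolbeault cohomology by the K\"unneth formula for $Y'\times\mathbb A^1$; when $q\circ m$ is constant one uses $c^*dz=c^*d\bar z=0$. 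The global statement then follows from the spectral sequence of the bicomplex $\Gamma(X_\bullet,\mathcal A^{\bullet\geq p,\bullet,e})$. To repair your argument you would need a statement of this type (vanishing of the image already on the pieces of a suitable cover, compatibly with the \v{C}ech descent), not an appeal to a nonexistent global $\mathbb A^1$-retraction.
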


\begin{proof}
We have the canonical splitting
\begin{equation*}
\mathbb H_{usu}^k(X^{an},\ad(\pi)(\pi^*\Omega^{\bullet,e}))=
\oplus_{p\in\mathbb Z}H_{usu}^{k-p}(X^{an},\ad(\pi)(\pi^*\Omega^{p,e}))
\end{equation*}
Consider for each $p\in\mathbb Z$, the composition in $C(\AnSm(\mathbb C))$
\begin{equation*}
\ad(\pi):=\ad(\pi^*,\pi_*)(-):\pi^*(\mathcal A^{\bullet\geq p,\bullet,e})\xrightarrow{\ad(\pi)}
\ad(\pi)(\pi^*(\mathcal A^{\bullet\geq p,\bullet,e}))\xrightarrow{i_{\ad(\pi)}}\mathcal A^{\bullet\geq p,\bullet},
\end{equation*}
where the embedding 
\begin{equation*}
\Omega^{p,e}\hookrightarrow(\Omega^{\bullet\geq p})^e\hookrightarrow\mathcal A^{\bullet\geq p,\bullet,e} 
\end{equation*}
in $C(\AnSm(\mathbb C)^{pr})$ is the Dolbeault resolution. 
Let $X=\cup_{i=1}^rX_i$ be an open affine cover. 
Then for each $I\subset[1,\ldots,r]$, there exists by Noether normalization lemma 
a finite map $X_I\to\mathbb A_{\mathbb C}^{d_X}$. 
Hence the presheaves $(\pi^*(\mathcal A^{l,r,e}))_{|X_I}$ and $(\pi^*\Omega^{p,e})_{|X_I}$ are sheaves.
We have thus, by Poincare lemma on open balls, for each $I\subset[1,\ldots,r]$,
\begin{equation*}
H_{usu}^k(X_I^{an},\ad(\pi)(\pi^*\Omega^{p,e}))=H^k\Gamma(X_I^{an},\ad(\pi)(\pi^*\mathcal A^{\bullet\geq p,\bullet,e})),
\end{equation*}
indeed, since open balls form a basis of the complex topology, 
there exists $D^l$ such that $D^l\subset D^{l-1}\times\mathbb A^1$. We have
\begin{equation*}
\ad(\pi)(\pi^*\mathcal A^{\bullet\geq p,\bullet,e})(X_I^{an}):=
\varinjlim_{m:X_I\to Y\times\mathbb A^1}\mathcal A^e(m)(\mathcal A^{\bullet\geq p,\bullet,e}(Y\times\mathbb A^1))
\subset\mathcal A^{\bullet\geq p,\bullet}(X^{an})
\end{equation*}
Now, for each $I\subset[1,\ldots,r]$,
\begin{itemize}
\item if $q\circ m:X_I^{an}\xrightarrow{m}Y\times\mathbb A^1\xrightarrow{q}\mathbb A^1$ is non constant,
we can factor 
\begin{equation*}
m=(n\times I)\circ m':X_I^{an}\xrightarrow{m'}Y'\times\mathbb A^1\xrightarrow{n\times I}Y\times\mathbb A^1
\end{equation*}
with $\dim Y'=\dim X-1$ and $m'(X_I^{an})\subset Y'\times\mathbb A^1$ a dense open subset, and 
\begin{equation*}
\mathcal A^e(m'):(\mathcal A^{\bullet\geq p,\bullet,e}(Y'\times\mathbb A^1)\to\mathcal A^{\bullet\geq p,\bullet}(X_I^{an})
\end{equation*}
is injective, and 
\begin{eqnarray*}
H^k\mathcal A^e(m)(\mathcal A^{\bullet\geq p,\bullet,e}(Y\times\mathbb A^1))
&=&H^k\mathcal A^e(m')(\mathcal A^{\bullet\geq p,\bullet,e}(Y'\times\mathbb A^1)) \\
&=&\mathcal A^e(m')(H^k(\mathcal A^{\bullet\geq p,\bullet,e})(Y'\times\mathbb A^1))=0
\end{eqnarray*}
by Kunneth formula for $Y'\times\mathbb A^1$,
\item if $q\circ m:X\xrightarrow{m}Y\times\mathbb A^1\xrightarrow{q}\mathbb A^1$ is constant with value $c$,
\begin{equation*}
H^k\mathcal A^e(m)(\mathcal A^{\bullet\geq p,\bullet,e})(Y\times\mathbb A^1))=0 
\end{equation*}
since $c^*d\bar z=0$ and $c^*dz=0$, 
where $c:Y\times\left\{c\right\}\hookrightarrow Y\times\mathbb A^1$ is the closed embedding.
\end{itemize}
Hence, for each $I\subset[1,\ldots,r]$ and $k\in\mathbb Z$
\begin{equation*}
H^k(e)(H_{usu}^k(X_I^{an},\ad(\pi)(\pi^*\Omega^{p,e})))=0.
\end{equation*}
\end{proof}
By the spectral sequence associated to the bi complex $\Gamma(X_{\bullet},\mathcal A^{\bullet\geq p,\bullet,e})$, we get
\begin{equation*}
H^k(e)(H_{usu}^k(X^{an},\ad(\pi)(\pi^*\Omega^{p,e})))=0.
\end{equation*}
for each $k\in\mathbb Z$.

\begin{prop}\label{cGAGAlog}
Let $X\in\PSmVar(\mathbb C)$. Consider the morphism $\an_X:X^{an}\to X$ in $\RTop$ given by analytical functor.
\begin{itemize}
\item[(i)]The analytic De Rham cohomology class of an algebraic cycle is logarithmic and is of type $(d,d)$,
that is, for $Z\in\mathcal Z^d(X)$  
\begin{equation*}
[Z]:=H^{2d}\Omega(\gamma^{\vee}_Z)([Z])\subset H^{2d}OL_{X^{an},0}(H_{usu}^d(X^{an},\Omega^d_{X^{an},\log,0}))
\subset H_{DR}^{2d}(X^{an})=H^{2d}_{DR}(X).
\end{equation*}
\item[(ii)]Conversely, for $2d\geq\dim(X)$, any $w\in H^{2d}OL_{X^{an},0}(H_{usu}^d(X^{an},\Omega^d_{X^{an},\log,0}))$ 
is the class of an algebraic cycle $Z\in\mathcal Z^d(X)\otimes\mathbb Q$, i.e. $w=[Z]$.
\item[(iii)] We have $H^jOL_{X^{an}}(H_{usu}^{j-l}(X^{an},\Omega^l_{X^{an},\log,0}))=0$ 
for $j,l\in\mathbb Z$ such that $2l>j$, $j\geq\dim(X)$.
\end{itemize}
\end{prop}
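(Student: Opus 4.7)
The plan is to treat (i) by a direct argument from the motivic Gysin formula and to prove (ii) and (iii) together by induction on $d_X := \dim X$, following the motivic scheme of \cite{B7} Theorem 2.

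For (i), by linearity and resolution of singularities it suffices to take an irreducible smooth closed subvariety $i:Z\hookrightarrow X$ of codimension $d$ and show $[Z] = i_*(1_Z)$ is logarithmic of type $(d,d)$. Proposition \ref{smXZ} writes the Gysin map as $Th(N_{Z/X})\circ P_{Z,X}\circ \gamma^\vee_Z$. Via the splitting principle, $Th(N_{Z/X})$ reduces to a cup product of first Chern classes $c_1(L_i)$ of line bundles, each of which lies in $H^2OL_{X^{an},0}(H^1(X^{an},\Omega^{1,an}_{\log,0}))$ by Remark \ref{ballclog}; the wedge of $d$ such $(1,1)$ logarithmic classes sits in $\Omega^{d,an}_{\log,0}$ by its very construction as $\wedge^d O_{X^{an}}^*\otimes\mathbb{Q}$. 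Since $P_{Z,X}$ is induced by a morphism in $\DA(\mathbb{C})$, its De Rham realization preserves the image of $OL_{X^{an},0}$, yielding the claim.

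For (ii) and (iii) we induct on $d_X$, the base case $d_X=0$ being trivial. Given $\alpha\in H^{j-l}_{usu}(X^{an},\Omega^l_{X^{an},\log,0})$, use Lemma \ref{splitting} to decompose $\alpha=\alpha_e+\alpha_{ne}$ with $\alpha_{ne}\in H^{j-l}(X^{an},\Omega^{l,an,ne}_{\log,0})$. In the commutative diagram preceding Lemma \ref{cGAGAloglem}, the contribution of $\alpha_e$ to $H^j_{DR}(X)$ factors through the vertical map $H^j(e)$ on $\mathbb{H}^j_{usu}(X^{an},\ad(\pi)(\pi^*\Omega^{\bullet,e}))$, which vanishes for $j\neq 0$ by Lemma \ref{cGAGAloglem}; hence one may assume $\alpha=\alpha_{ne}$. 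Now cover $X$ by Zariski opens $U_\beta$ of dimension $d_X$ satisfying the hypothesis of Proposition \ref{UXusu}(ii)--(ii)' (Noether normalization on an affine chart, after removing a generic divisor). On each $U_\beta$ the degree hypothesis ($2d\geq d_X$ for (ii); $j\geq d_X$ and $2l>j$ for (iii)) combined with Proposition \ref{UXusu}(ii), respectively (ii)', forces $j_{U_\beta}^*\alpha_{ne}=0$. The localization triangle then lifts $\alpha_{ne}$ to a class supported on a divisor $D \subset X$. After desingularizing $n:\tilde D\to D$ and applying the motivic excision $P_{D,X}$ of Proposition \ref{smXZ} together with the motivic residue $Res_{\tilde D,X}$ of Definition \ref{ResMot}, this supported class corresponds to a logarithmic class of appropriately shifted bidegree on the smooth projective variety $\tilde D$ of dimension $d_X-1$. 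The inductive hypothesis applied to $\tilde D$ produces an algebraic cycle for (ii), whose Gysin pushforward to $X$ realizes $w$, and yields vanishing for (iii).

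The main obstacle is justifying that the motivic maps $P_{Z,X}$, $Th(N_{Z/X})$ and $Res_{Z,X}$ really carry the image of $OL_{X^{an},0}$ into itself under De Rham realization; this is exactly what makes the inductive reduction transport logarithmic classes coherently from $X$ to $\tilde D$. Concretely it requires the transfer structure on $\Omega^{l,an}_{\log}$ compatible with that on $\Omega^{l,an}$ (Lemma \ref{a1trcan}(i)) together with the $\mathbb{A}^1$-invariance of $a_{usu}\Omega^{l,an,ne}_{\log,0}$ (Lemma \ref{a1trcan}(ii)'), so that the logarithmic subcomplex inherits the excision and Thom isomorphisms from the six-functor formalism on $\DA(\mathbb{C})$. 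Once this functoriality is granted, Proposition \ref{UXusu} supplies the dimension drop and the induction closes in finitely many steps.
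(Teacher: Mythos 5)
Your plan follows the paper's own proof essentially step for step: the same decomposition via Lemma \ref{splitting}, killing the ``entire'' part with Lemma \ref{cGAGAloglem}, the restriction-vanishing on opens admitting a finite \'etale map to a complement of a divisor in $\mathbb A^{d_U}$ from Proposition \ref{UXusu}, and the transport of logarithmic classes through the motivic purity/Gysin maps justified by Lemma \ref{a1trcan} and Propositions \ref{smXZ}, \ref{cGAGAlog}'s setup. The only differences are presentational: the paper runs a finite $d$-step reduction of supports inside the fixed $X$ (ending with a codimension-$d$ cycle, and, for (iii), concluding with a diagram chase through $\hat G_{Z,X}$ and the vanishing of $H^0(Z^{an},\Omega^{k,an,ne}_{\log,0})$ for $k>0$) rather than an induction on $\dim X$ with pushforward from $\tilde D$, and for (i) it simply refers to the proof of \cite{B7}, Theorem 2, rather than the splitting-principle sketch you give.
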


\begin{proof}
Consider, for $j,l\in\mathbb Z$ and $X\in\SmVar(\mathbb C)$, 
\begin{eqnarray*}
L^{l,j-l}(X):=H^jOL_{X^{an},0}(H_{usu}^{j-l}(X^{an},\Omega^l_{X^{an},\log,0}))
:=H^jOL_0^{an}(H^{j-l}\Hom(\An^*\mathbb Z(X),E^{\bullet}_{usu}(\Omega_{\log,0}^{l,an}))).
\end{eqnarray*} 
Consider also for $j,l\in\mathbb Z$, $X\in\SmVar(\mathbb C)$ and $Z\subset X$ a closed subset,
\begin{eqnarray*}
L_Z^{l,j-l}(X):=H^jOL_{X^{an},0}(H_{usu,Z}^{j-l}(X^{an},\Omega_{X^{an},\log,0}^l))
:=H^jOL_0^{an}(H^{j-l}\Hom(\An^*\mathbb Z(X,X\backslash Z),E^{\bullet}_{usu}(\Omega_{\log,0}^{l,an}))).
\end{eqnarray*} 
Consider, for $j,l\in\mathbb Z$ and $X\in\SmVar(\mathbb C)$, 
\begin{eqnarray*}
L^{l,j-l,ne}(X):=H^j(\bar OL_0^{an})(H^{j-l}\Hom(\An^*\mathbb Z(X),E^{\bullet}_{usu}(\Omega_{\log,0}^{l,an,ne}))).
\end{eqnarray*}
Consider also for $j,l\in\mathbb Z$, $X\in\SmVar(\mathbb C)$ and $Z\subset X$ a closed subset,
\begin{eqnarray*}
L_Z^{l,j-l,ne}(X):=
H^j(\bar OL_0^{an})(H^{j-l}\Hom(\An^*\mathbb Z(X,X\backslash Z),E^{\bullet}_{usu}(\Omega_{\log,0}^{l,an,ne}))).
\end{eqnarray*}
For each $l,j\in\mathbb Z$, the presheaf $H^{j-l}E_{usu}(\Omega_{\log}^{l,an,ne})\in\PSh(\AnSm(\mathbb C))$ 
is $\mathbb A^1$ invariant by proposition \ref{UXusu}(i). Indeed, for $Y\in\AnSm(\mathbb C)$ connected,
take an open cover $Y=\cup_{i\in I}D_i$ by open balls $D_i\simeq D(0,1)^{d_Y}$, 
and let $D_{\bullet}\in\Fun(\Delta,\AnSm(\mathbb C))$ be the associated simplicial analytic manifold, 
with for $J\subset I$, $j_{IJ}:D_I:=\cap_{i\in I} D_i\hookrightarrow D_J:=\cap_{i\in J}D_i$ the open embedding,
we then get the isomorphism
\begin{eqnarray*}
p_Y^*:H^{j-l}E_{usu}(\Omega_{\log,0}^{l,an,ne})(Y\times\mathbb A^1):=
H^{j-l}_{usu}(Y\times\mathbb A^1,\Omega_{\log,0}^{l,an,ne})\xrightarrow{\sim} \\
\mathbb H^{j-l}(D_{\bullet}\times\mathbb A^1,\Omega_{\log,0}^{l,an,ne})\xrightarrow{\sim} 
H^{j-l}\Gamma(D_{\bullet}\times\mathbb A^1,a_{usu}\Omega_{\log,0}^{l,an,ne}) \\
\xrightarrow{\sim}H^{j-l}\Gamma(D_{\bullet},a_{usu}\Omega_{\log,0}^{l,an,ne})\xrightarrow{\sim}
\mathbb H^{j-l}(D_{\bullet},\Omega_{\log,0}^{l,an,ne})
\xrightarrow{\sim}H^{j-l}_{usu}(Y,\Omega_{\log,0}^{l,an,ne})=:H^{j-l}E_{usu}(\Omega_{\log,0}^{l,an,ne})(Y)
\end{eqnarray*}
where $p_Y:X\times\mathbb A^1\to Y$ is the projection, and the second and the fourth map is the isomorphism given by 
proposition \ref{UXusu}(i)' and (i)'',
and the middle map is an isomorphism by lemma \ref{a1trcan}(ii)'.
This gives in particular, for $X\in\SmVar(\mathbb C)$ and $Z\subset X$ a smooth subvariety of (pure) codimension $d$, 
by proposition \ref{smXZ} an isomorphism
\begin{eqnarray*}
E_{usu}(\Omega^{l,an,ne}_{\log,0})(P_{Z,X}):
L_Z^{l,j-l,ne}(X)\xrightarrow{\sim}L_Z^{l,j-l,ne}(N_{Z/X})\xrightarrow{\sim}L^{l-d,j-l-d,ne}(Z).
\end{eqnarray*}

\noindent(i):Similar to the proof of \cite{B7} theorem 2.

\noindent(ii): Let $X\in\PSmVar(\mathbb C)$ and
\begin{eqnarray*}
\alpha=H^{2d}OL_{X^{an}}(\alpha)
\in L^{d,d}(X):&=&H^{2d}OL_{X^{an},0}(H_{usu}^d(X^{an},\Omega^d_{X^{an},\log,0})) \\ 
&=&H^{2d}OL_0^{an}(H^d\Hom(\An^*\mathbb Z(X),E^{\bullet}_{usu}(\Omega_{\log,0}^{d,an}))).
\end{eqnarray*}
Then,
\begin{eqnarray*}
H^{2d}(q)(\alpha)=H^{2d}(q)\circ H^{2d}OL_{X^{an},0}(\alpha)=H^{2d}\bar OL_0^{an}\circ H^{2d}(q)(\alpha) \\
\in L^{d,d,ne}(X):=H^{2d}\bar OL_0^{an}(H^d\Hom(\An^*\mathbb Z(X),E^{\bullet}_{usu}(\Omega_{\log,0}^{d,an,ne}))).
\end{eqnarray*}
Up to split $X$ by its connected components, we may assume that $X$ is connected. 
Let $U\subset X$ an open subset such that there exists an etale map $e:U\to\mathbb A_{\mathbb C}^{d_U}$ 
such that $e:U\to e(U)$ is finite etale and 
$\mathbb A_{\mathbb C}^{d_U}\backslash e(U)\subset\mathbb A_{\mathbb C}^{d_U}$ is a divisor.
Denote $j:U\hookrightarrow X$ the open embedding.
By proposition \ref{UXusu}(ii), we have 
\begin{equation*}
j^*H^{2d}(q)(\alpha)=0\in H_{usu}^d(U^{an},\Omega^{d,an,ne}_{\log,0})=0.
\end{equation*}
Considering a divisor $X\backslash U\subset D\subset X$, we get 
\begin{equation*}
H^{2d}(q)(\alpha)=H^{d}E^{\bullet}_{usu}(\Omega_{\log,0}^{d,an,ne})(\gamma^{\vee}_D)(\alpha'), \, 
\alpha'\in L_D^{l,j-l,ne}(X).
\end{equation*}
Denote $D^o\subset D$ its smooth locus, $l:X^o\hookrightarrow X$ a Zariski open subset such that $X^o\cap D=D^o$ and 
$n_D:\tilde D\xrightarrow{\epsilon}D\hookrightarrow X$, $\epsilon$ being a desingularization, 
and $l_D:D^o\hookrightarrow\tilde D$ the open embedding. 
We then have 
\begin{eqnarray*}
l^*\alpha'\in L^{d,d}_{D^o}(X^o)=L^{d-1,d-1}(D^o), \\ \alpha'=l_D^*\alpha'', \; 
\alpha''\in H_{usu}^{d-1}(\tilde D^{an},\Omega^{d-1,an,ne}_{\log,0}), \; \Omega(D(\mathbb Z(n_D)))(\alpha'')=\alpha+\beta', \; l^*\beta'=0
\end{eqnarray*}
We repeat this procedure with each connected components of $D^o$ instead of $X$.
By a finite induction of $d$ steps, we get 
\begin{equation*}
H^{2d}(q)(\alpha)=H^{d}E^{\bullet}_{usu}(\Omega_{\log,0}^{d,an,ne})(\gamma^{\vee}_Z)(\alpha'), \, 
\alpha'\in L_Z^{d,d,ne}(X)=L^{0,0,ne}(Z^o).
\end{equation*}
with $Z:=D_d\subset\cdots\subset D\subset X$ a pure codimension $d$ (Zariski) closed subset, thus 
\begin{equation*}
\alpha=\sum_in_i[Z_i]+\beta\in H^{2d}_{DR}(X), \; 
\beta\in H^{2d}(e)(\mathbb H_{usu}^{2d}(X^{an},\ad(\pi)(\pi^*\Omega^{\bullet,e})))
\end{equation*}
where $n_i\in\mathbb Q$ and $(Z_i)_{1\leq i\leq t}\subset Z$ are the irreducible components of $Z$.
But by lemma \ref{cGAGAloglem}, we have 
\begin{equation*}
H^k(e)(\mathbb H_{usu}^k(X^{an},\ad(\pi)(\pi^*\Omega^{\bullet,e})))=0. 
\end{equation*}
for each $k\in\mathbb Z$, $k\neq 0$. Hence $\beta=0$, that is
\begin{equation*}
\alpha=\sum_in_i[Z_i]\in H^{2d}_{DR}(X).
\end{equation*}

\noindent(iii): Let $j<2l$. Let $X\in\PSmVar(\mathbb C)$ and $w\in L^{l,j-l}(X)$. 
By proposition \ref{UXusu}(ii)', arguing as in the proof of (ii),
there exists $Z\subset X$ a closed subset of pure codimension $j-l$ such that
\begin{equation*}
H^j(q)(w)=H^{j-l}E^{\bullet}_{usu}(\Omega_{\log,0}^{l,an,ne})(\gamma^{\vee}_Z)(w'), \, w'\in L_Z^{l,j-l,ne}(X).
\end{equation*} 
For $Z'\subset X$ a closed subset of pure codimension $c$,
consider a desingularisation $\epsilon:\tilde Z'\to Z'$ of $Z'$ and denote $n:\tilde Z'\xrightarrow{\epsilon}Z'\subset X$.
The morphism in $\DA(k)$
\begin{equation*}
G_{Z',X}:M(X)\xrightarrow{D(\mathbb Z(n))}M(\tilde Z')(c)[2c]\xrightarrow{\mathbb Z(\epsilon)}M(Z')(c)[2c]
\end{equation*}
where $D:\Hom_{\DA(k)}(M_c(\tilde Z'),M_c(X))\xrightarrow{\sim}\Hom_{\DA(k)}(M(X),M(\tilde Z')(c)[2c])$
is the duality isomorphism from the six functor formalism (moving lemma of Suzlin and Voevodsky)
and $\mathbb Z(n):=\ad(n_!,n^!)(a_X^!\mathbb Z)$, is given by a morphism in $C(\SmVar(k))$
\begin{equation*}
\hat G_{Z',X}:\mathbb Z_{tr}(X)\to E_{et}(C_*\mathbb Z_{tr}(Z'))(c)[2c].
\end{equation*}
Let $l:X^o\hookrightarrow X$ be an open embedding such that $Z^o:=Z\cap X^o$ is the smooth locus of $Z$.
We have the following commutative diagram of abelian groups
\begin{equation*}
\xymatrix{0\ar[r] & H_{usu,Z}^{j-l}(X^{an},\Omega_{\log,0}^{l,an,ne})\ar[r]^{l^*} & 
H_{usu,Z^o}^{j-l}(X^{o,an},\Omega_{\log,0}^{l,an,ne})\ar[r]^{\partial} & 
H_{usu,Z\backslash Z^o}^{j-l+1}(X^{an},\Omega^{l,an,ne}_{\log,0})\ar[r] & \cdots \\
0\ar[r] & H^0_{usu}(Z^{an},\Omega^{2l-j,an,ne}_{\log,0})\ar[r]^{l*}\ar[u]^{\Omega(\hat G_{Z,X})} & 
H^0_{usu}(Z^{o,an},\Omega^{2l-j,an,ne}_{\log,0})\ar[r]^{\partial}\ar[u]^{\Omega(P_{Z^o,X^o})} &
H_{usu,Z\backslash Z^o}^1(Z^{an},\Omega^{2l-j,an,ne}_{\log,0})\ar[u]^{\Omega(\hat G_{Z,X})}\ar[r] & \cdots}
\end{equation*}
whose rows are exact sequences. Consider 
\begin{equation*}
l^*w'=\Omega(P_{Z^o,X^o})(w^o)\in H_{usu,Z^o}^{j-l}(X^{o,an},\Omega_{\log,0}^{l,an,ne}), \; 
w^o\in H^0_{et}(Z^{o,an},\Omega^{2l-j,an,ne}_{\log,0}).
\end{equation*}
Since $\partial l^*w'=0\in H_{usu,Z\backslash Z^o}^{j-l+1}(X^{an},\Omega^{l,an,ne}_{\log,0})$,
we get, considering the smooth locus
$(Z\backslash Z^o)^o\subset Z\backslash Z^o$ and $X^{oo}\subset X$ is an open subset
such that $X\cap(Z\backslash Z^o)^o=X^{oo}\cap (Z\backslash Z^o)$, 
\begin{equation*}
\partial w^o=0\in H_{usu,Z\backslash Z^o}^1(Z^{an},\Omega^{2l-j,an,ne}_{\log,0}), 
\end{equation*}
since $H_{usu,(Z\backslash Z^o)\backslash(Z\backslash Z^o)^o}^1(Z^{an},\Omega^{2l-j,an,ne}_{\log,0})=0$
for dimension reasons, that is 
\begin{equation*}
H^j(q)(w)=\Omega(\hat G_{Z,X})(w'), \, \mbox{with} \, w'\in H^0_{et}(Z^{an},\Omega^{2l-j,an,ne}_{\log,0}).
\end{equation*} 
Hence $H^j(q)(w)=0$ since $H^0(Z^{'an},\Omega^k_{\log,0})=0$ for all $k>0$ and all $Z'\in\PVar(\mathbb C)$.
But by lemma \ref{cGAGAloglem}
\begin{equation*}
H(e)(\mathbb H_{usu}^j(X^{an},\ad(\pi)(\pi^*\Omega^{\bullet,e})))=0. 
\end{equation*}
Hence $w=0$.
\end{proof}

\section{Hypersurface De Rham cohomology and its complex periods}

We denote $\mathbb P^N:=\mathbb P^N_{\mathbb C}:=\Proj(\mathbb C[z_0,\cdots,z_N])$.
Let $X=V(f)\subset\mathbb P^N$ be a smooth hypersurface. If $N-1=2p$ is even, we consider the decompositions
\begin{equation*}
H_{DR}^{N-1}(X)=\mathbb C[\Lambda\cap X]\oplus H_{DR,v}^{N-1}(X) \; , \; 
H_{\sing}^{N-1}(X^{an},\mathbb Q)=\mathbb C[\Lambda\cap X]\oplus H_{\sing,v}^{N-1}(X^{an},\mathbb Q)
\end{equation*}
of vector spaces, where $\Lambda\subset\mathbb P^N$ is a linear subspace of codimension $p$.
If $N-1=2p+1$ is even, we set
\begin{equation*}
H_{DR,v}^{N-1}(X):=H_{DR}^{N-1}(X) \; , \; H_{\sing,v}^{N-1}(X^{an},\mathbb Q):=H_{\sing}^{N-1}(X^{an},\mathbb Q).
\end{equation*}

Let $X=V(f)\subset\mathbb P^N$ be a smooth hypersurface. 
Denote $j:U:=\mathbb P^N\backslash X\hookrightarrow\mathbb P^N$ the open embedding. We consider the algebraic Hodge module 
\begin{equation*}
j_{*Hdg}(O_U,F_b):=((j_*O_U,F),j_*\mathbb Q_{U^{an}},j_*\alpha(U))\in HM(\mathbb P^N), 
\end{equation*}
where $F^kj_*O_U=\mathbb C[x_1,\cdots x_N][\frac{1}{f^k}]$. Recall for each $p\in\mathbb Z$,
\begin{equation*}
F^pH^N_{DR}(U):=H^N\iota^{\geq p}(\mathbb H^N(\mathbb P^N,\Omega^{\bullet\geq p}_{\mathbb P^N}(\log X)))
=H^N\iota^{\geq p}H^N(\mathbb P^N,\Omega^{\bullet}_{\mathbb P^N}\otimes_{O_{\mathbb P^N}}F^{\bullet-p}j_*O_U)
\end{equation*}
and
\begin{equation*}
F^pH^{N-1}(X):=H^{N-1}\iota^{\geq p}(\mathbb H^{N-1}(X,\Omega_X^{\bullet\geq p})).
\end{equation*}
where
\begin{equation*}
\iota^{\geq p}:\Omega^{\bullet\geq p}_{\mathbb P^N}(\log X)\hookrightarrow\Omega^{\bullet}_{\mathbb P^N}(\log X) \; , \;
\iota^{\geq p}:\Omega^{\bullet}_{\mathbb P^N}\otimes_{O_{\mathbb P^N}}F^{\bullet-p}j_*O_U\hookrightarrow
\Omega^{\bullet}_{\mathbb P^N}\otimes_{O_{\mathbb P^N}}j_*O_U=j_*\Omega^{\bullet}_U
\end{equation*}
and $\iota^{\geq p}:\Omega^{\bullet\geq p}_X\hookrightarrow\Omega^{\bullet}_X$
are the canonical embeddings.
Then the map $Res_{X,\mathbb P^N}$ of definition \ref{ResMot} induces for each $p\in\mathbb Z$, canonical isomorphisms 
\begin{equation*}
H^N\Omega(Res_{X,\mathbb P^N}):F^{p+1}H_{DR}^N(U)\xrightarrow{\sim}F^pH_{DR}^{N-1}(X)_v \; , \;
H^N\Bti(Res_{X,\mathbb P^N}):H_{\sing}^N(U,\mathbb Q)\xrightarrow{\sim}H_{\sing}^{N-1}(X,\mathbb Q)_v.
\end{equation*}

\begin{lem}\label{OBac}
Let $X=V(f)\subset\mathbb P^N$ be a smooth hypersurface. 
Denote $j:U:=\mathbb P^N\backslash X\hookrightarrow\mathbb P^N$ the open embedding. 
Let $B\subset\mathbb P^N$, $\phi:B\simeq D(0,1)^N$ be an open ball 
such that $\phi(X\cap B)=\left\{0\right\}\times D(0,1)^{N-1}$. Then,
\begin{itemize}
\item[(i)] For $q\in\mathbb Z$, $q\neq 0$,
$\mathbb H_{usu}^q(B,(\Omega^{\bullet}_{\mathbb P^N}\otimes_{O_{\mathbb P^N}}F^{\bullet-p}j_*O_U)^{an})=0$.
\item[(ii)]For $q\in\mathbb Z$, $q\neq 0$,
$H_{usu}^q(B,\Omega^{N-p}_{\mathbb P^{N,an}}(\log X)^{\partial=0})=0$.
\item[(iii)]For $q\in\mathbb Z$, $q\neq 0$,
$H_{usu}^q(B\backslash X,\Omega^{N-p}_{\mathbb P^{N,an},\log,0})=0$.
\end{itemize}
\end{lem}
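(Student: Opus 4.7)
The plan exploits that $B$ is a Stein polydisc with smooth divisor $X\cap B=\{z_1=0\}$, so that $B\setminus X\cong D^*\times D^{N-1}$ is Stein and homotopy-equivalent to $S^1$. Parts (i) and (ii) will be handled by Cartan's theorem B (the relevant sheaves are coherent), while (iii) is the technical nut: the sheaf $\Omega^{N-p}_{\log,0}$ is not $O$-coherent and requires a separate cover-and-glue argument.

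For (i), each term of $(\Omega^q_{\mathbb P^N}\otimes_{O_{\mathbb P^N}}F^{q-p}j_*O_U)^{an}$ restricts on $B$ to a coherent $O_B$-module, since the pole piece $F^k j_*O_U|_B$ is locally the finitely generated $\sum_{i=0}^k O_B\cdot z_1^{-i}$. Cartan's theorem B on Stein $B$ gives vanishing of the higher sheaf cohomology of each term, so the hypercohomology reduces to the cohomology of the complex of global sections. The quasi-isomorphism $\iota_\partial$ cited from Voisin (whose cone is acyclic on balls) then identifies this hypercohomology, up to shift, with that of $\Omega^{N-p}(\log X)^{\partial=0}$ on $B$, whose vanishing in nonzero degrees is exactly (ii).

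For (ii), each $\Omega^r(\log X)|_B$ is a locally free $O_B$-module of finite rank, hence coherent, so Cartan's theorem B gives $H^q(B,\Omega^r(\log X))=0$ for $q\geq 1$. Consider the stupid truncation $\sigma^{\geq N-p}\Omega^\bullet(\log X)$. By Deligne's log Poincar\'e lemma $\Omega^\bullet(\log X)\simeq Rj_*\mathbb C_U$, and the $S^1$-homotopy type of $B\setminus X$ gives $R^m j_*\mathbb C_U|_B=0$ for $m\geq 2$; for $p\leq N-1$, the canonical truncation thus yields a quasi-isomorphism
\[
\Omega^{N-p}(\log X)^{\partial=0}[-(N-p)]\simeq\sigma^{\geq N-p}\Omega^\bullet(\log X),
\]
whose hypercohomology on $B$, by term-wise Cartan B, is the cohomology of the complex of global sections, concentrated in degree $N-p$. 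After the shift this gives $H^q(B,\Omega^{N-p}(\log X)^{\partial=0})=0$ for $q\neq 0$. The boundary case $p=N$ is immediate: $\Omega^0(\log X)^{\partial=0}=\mathbb C_B$ has vanishing higher cohomology on the contractible ball $B$.

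For (iii), the hard part is that $\Omega^{N-p}_{\log,0}=\wedge^{N-p}O^*\otimes\mathbb Q$ is not $O$-coherent. The plan is to cover $B\setminus X\cong D^*\times D^{N-1}$ by two Stein opens $V_1,V_2$ corresponding to overlapping angular sectors of $D^*$ times $D^{N-1}$, chosen so that each $V_i$ and each connected component of $V_1\cap V_2$ is biholomorphic (by Riemann mapping applied to the sector) to an open polydisc. Proposition~\ref{UXusu}(i) then gives $H^q(V_i,\Omega^{N-p}_{\log,0})=H^q(V_1\cap V_2,\Omega^{N-p}_{\log,0})=0$ for $q\geq 1$, so the \v{C}ech complex for this 2-element cover computes the sheaf cohomology and yields $H^q(B\setminus X,\Omega^{N-p}_{\log,0})=0$ for $q\geq 2$ automatically. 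The residual case $q=1$ reduces to the surjectivity of the \v{C}ech differential on global sections, which is verified via the exponential sequence $0\to\mathbb Q\to O\to O^*\otimes\mathbb Q\to 0$: Stein vanishing $H^1(B\setminus X,O)=0$ and the $S^1$-homotopy type $H^2(B\setminus X,\mathbb Q)=0$ imply $H^1(B\setminus X,O^*\otimes\mathbb Q)=0$, and the higher exterior case is obtained by the analogous splitting argument on each sector.
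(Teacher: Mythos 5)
Your parts (i) and (ii) are fine: (i) is essentially the paper's argument (coherence of the terms plus Cartan B on the Stein ball, then the quasi-isomorphism $\iota_\partial$ from Voisin), while for (ii) you take a genuinely different route --- the paper proves (ii) by a rescaling/shrinking argument on sub-balls $D(0,r_n)^N$, whereas you use termwise Cartan B, the logarithmic Poincar\'e lemma $\Omega^\bullet_{\mathbb P^{N,an}}(\log X)\simeq Rj_*\mathbb C_U$ with $R^mj_*\mathbb C_U|_B=0$ for $m\geq 2$, and the stupid truncation $\sigma^{\geq N-p}$; this is valid (the concentration of the global-sections complex in degree $N-p$ follows from $H^{\geq 2}(B\setminus X,\mathbb C)=0$, which you implicitly use) and arguably more standard than the paper's.

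The genuine gap is in (iii). Your two-sector Leray cover of $B\setminus X\cong D^*\times D^{N-1}$ correctly kills $H^q$ for $q\geq 2$ and reduces $q=1$ to surjectivity of the \v{C}ech differential $\mathcal F(V_1)\oplus\mathcal F(V_2)\to\mathcal F(V_1\cap V_2)$ for $\mathcal F=\wedge^{N-p}O^*\otimes\mathbb Q$, but you only prove this for $N-p=1$, where the exponential sequence $0\to\mathbb Q\to O\to O^*\otimes\mathbb Q\to 0$ together with Steinness and $H^2(B\setminus X,\mathbb Q)=0$ applies. For $N-p\geq 2$ the sentence ``the higher exterior case is obtained by the analogous splitting argument on each sector'' is not an argument: there is no exponential-type resolution of $\wedge^kO^*\otimes\mathbb Q$ by coherent and constant sheaves, and a section over a component $W$ of $V_1\cap V_2$ (where every unit is an exponential, but of a function on $W$ only) need not extend to $V_1$ or $V_2$ nor split as a difference of sections defined there, so the needed surjectivity is exactly the hard point and is left unproved. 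The paper handles (iii) by a completely different mechanism: it uses the vanishing of proposition~\ref{UXusu}(i) on the ball $B$, the long exact sequence of cohomology with support in $B\cap X$, and the purity/Gysin identification of $H^{q+1}_{B\cap X}(B,\Omega^{N-p}_{\log,0})$ with cohomology of $\Omega^{N-p-1}_{\log,0}$ on the lower-dimensional ball $B\cap X$ (via the motivic purity of proposition~\ref{smXZ}), where \ref{UXusu}(i) applies again; it is precisely this reduction to $B\cap X$ that replaces the missing step in your proposal, so you would either have to import that purity argument or supply an actual computation of $\check H^1$ of $\wedge^{N-p}O^*\otimes\mathbb Q$ on the sector cover.
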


\begin{proof}
\noindent(i): Since $B$ is Stein, we have for $r,q\in\mathbb Z$, $q\neq 0$, 
\begin{equation*}
H_{usu}^q(B,(\Omega^r_{\mathbb P^N}\otimes_{O_{\mathbb P^N}}F^{r-p}j_*O_U)^{an})=0.
\end{equation*}
On the other hand the De Rham complex of $(j_*O_{B\backslash X},F)$ is acyclic (see e.g. \cite{Voisin} corollary 18.7)

\noindent(ii): Consider a sequence of sub-balls 
\begin{equation*}
l_n:B_n:=D(0,r_n)^N\hookrightarrow B:=D(0,1)^N, n\in\mathbb N, \; \mbox{with} \; 
r_n\to 0 \; \mbox{when} \; n\to\infty.
\end{equation*}
Since $(B_n:=D(0,r_n))_{n\in\mathbb N}$ form a basis of neighborhood of $0\in B$ and 
since $\Omega^{N-p}_{\mathbb P^{N,an}}(\log X)^{\partial=0}$ is a single presheaf, 
there exists $n:=n_{\alpha}\in\mathbb N$ (depending on $\alpha$) such that 
\begin{equation*}
l_n^*\alpha=0\in H^q(B_n,\Omega^{N-p}_{\mathbb P^{N,an}}(\log X)^{\partial=0}). 
\end{equation*}
But it follows immediately from the definition of the the presheaf 
$\Omega^{N-p}_{\mathbb P^{N,an}}(\log X)^{\partial=0}$ considering the isomorphism 
\begin{equation*}
\phi_n:B_n\xrightarrow{\sim}B, \; \phi_n(z):=r_n^{-1}z
\end{equation*}
so that $(\phi_n)^{-1}\circ l_n=r_nI$, that 
\begin{equation*}
l_n^*\alpha=l_n^*\phi_n^{-1*}\phi_n^*\alpha=r_n\phi_n^*\alpha
\in H^q(B_n,\Omega^{N-p}_{\mathbb P^{N,an}}(\log X)^{\partial=0}).
\end{equation*}
Hence $\phi_n^*\alpha=0$. Since $\phi_n$ is an isomorphism, we get $\alpha=0$.

\noindent(iii): By proposition \ref{UXusu}(i), we have $H_{usu}^q(B,\Omega^{N-p}_{\mathbb P^{N,an},\log,0})=0$.
Hence, by the spectral sequence associated to the double complex
\begin{equation*}
H^q(B,\sing_{D_{\bullet}}\Omega^{N-p}_{\mathbb P^{N,an},\log,0})=0.
\end{equation*}
Thus,
\begin{eqnarray*}
\partial=c(\mathbb Z(B\backslash X)):
H^q(B\backslash X,\sing_{D_{\bullet}}\Omega^{N-p}_{\mathbb P^{N,an},\log,0})\xrightarrow{\sim} \\
H^{q+1}_{B\cap X}(B,\sing_{D_{\bullet}}\Omega^{N-p}_{\mathbb P^{N,an},\log,0})=
H^q(B\cap X,\sing_{D_{\bullet}}\Omega^{N-p-1}_{\mathbb P^{N,an},\log,0})=0
\end{eqnarray*}
is an isomorphism. Hence, $H^q(B\backslash X,\Omega^{N-p}_{\mathbb P^{N,an},\log,0})=0$.
\end{proof}

We have by the local acyclicity of the De Rham complex of $(j_*O_U,F)$ a canonical map $B_{\partial}$ :

\begin{defi}\label{HUDR}
Let $X=V(f)\subset\mathbb P^N$ be a smooth hypersurface. 
Denote $j:U:=\mathbb P^N\backslash X\hookrightarrow\mathbb P^N$ the open embedding.
Consider the canonical embedding
\begin{equation*}
\iota_{\partial}:\Omega^{N-p}_{\mathbb P^{N,an}}(\log X)^{\partial=0}[-p]\hookrightarrow
(\Omega^{\bullet}_{\mathbb P^N}\otimes_{O_{\mathbb P^N}}F^{\bullet-p}j_*O_U)^{an}
\end{equation*}
Let $\mathbb P^{N,an}=\cup_{i=1}^nB_i$ an open cover, with $\phi_i:B_i\simeq D(0,1)^N$ open balls
such that $\phi_i(X\cap B_i)=\left\{0\right\}\times D(0,1)^{N-1}$. 
Denote, for $I\subset[1,\ldots,n]$, $l_I:B_I\hookrightarrow\mathbb P^N$ the open embeddings, with $B_I:=\cap_{i\in I}B_i$. 
Since, for each $I\subset[1,\ldots,n]$ and $p\in\mathbb Z$, 
\begin{equation*}
l_I^*\iota_{\partial}:l_I^*\Omega^{N-p}_{\mathbb P^{N,an}}(\log X)^{\partial=0}[-p]\hookrightarrow
l_I^*(\Omega^{\bullet}_{\mathbb P^N}\otimes_{O_{\mathbb P^N}}F^{\bullet-p}j_*O_U)^{an}
\end{equation*}
is an quasi-isomorphism (see e.g. \cite{Voisin} corollary 18.7), 
the spectral sequence of the filtration on the total complex of a bi-complex gives, for each $p\in\mathbb Z$, 
a canonical isomorphism, where the first and last map are isomorphism by lemma \ref{OBac}(i) and (ii) respectively,
\begin{eqnarray*}
B_{\partial}:F^pH_{DR}(U):=H^N\iota^{\geq p}
\mathbb H_{zar}^N(\mathbb P^N,\Omega^{\bullet}_{\mathbb P^N}\otimes_{O_{\mathbb P^N}}F^{\bullet-p}j_*O_U) \\ 
\xrightarrow{H^N((l_i^*=\Omega(l_i))_{1\leq i\leq n})}
H^N\iota^{\geq p}H^N\Gamma(B_{\bullet},(\Omega^{\bullet}_{\mathbb P^N}\otimes_{O_{\mathbb P^N}}F^{\bullet-p}j_*O_U)^{an}) \\
\to H^p\Gamma(B_{\bullet},\Omega^{N-p}_{\mathbb P^{N,an}}(\log X)^{\partial=0})
\xrightarrow{H^N((l_i^*=\Omega(l_i))_{1\leq i\leq n})^{-1}}
H^p_{usu}(\mathbb P^N,\Omega^{N-p}_{\mathbb P^{N,an}}(\log X)^{\partial=0}),
\end{eqnarray*}
given by, for $w\in\Gamma(U,\Omega^N_{\mathbb P^N})$, inductively
by a $p$ step induction where $p$ is the order of the pole of $w$ at $X$,
\begin{itemize}
\item taking for each $1\leq i\leq n$, $l_i^*w=\partial w_{1i}$
since $\partial w=0$ for dimensional reason and by exactness of $l_i^*\iota_{\partial}$, 
\item we then have
for each $1\leq i,j\leq n$, considering $w_{1ij}:=l_{ij}^*(w_{1i}-w_{1j})$, 
$w_{1ij}=\partial w_{2ij}$ since 
\begin{equation*}
\partial w_{1ij}=l_{ij}^*(\partial w_{1i}-\partial w_{1j})=l_{ij}^*(l_i^*w-l_j^*w)=0
\end{equation*}
and by exactness of $l_{ij}^*\iota_{\partial}$, 
\item at the final $p$ step, 
$B(w):=w_{pI}\in H^N(B_{\bullet},\Omega^{N-p}_{\mathbb P^{N,an}}(\log X)^{\partial=0})$, $card I=p$.
\end{itemize}
By definition $B_{\partial}(w)=(H^p\iota_{\partial})^{-1}(w)$.
\end{defi}

Let $X=V(f)\subset\mathbb P^N$ be a smooth hypersurface. Denote $U:=\mathbb P^N\backslash X$. For $l\in\mathbb Z$,
we consider the factorization in $C(U^{an})$
\begin{equation*}
OL^l_{U^{an},0}:\Omega^l_{U^{an},\log,0}[-l]\xrightarrow{\widehat{OL_{U^{an},0}^l}[-l]}
\Omega^l_{\mathbb P^{N,an}}(\log X)^{\partial=0}[-l]
\xrightarrow{\iota_{\partial}}(\Omega^{\bullet}_{\mathbb P^N}\otimes_{O_{\mathbb P^N}}F^{\bullet-p}j_*O_U)^{an}
\xrightarrow{\iota^{\geq l}}\Omega^{\bullet}_{U^{an}},
\end{equation*}
where 
\begin{equation*}
\iota_{\partial}:\Omega^l_{\mathbb P^{N,an}}(\log X)^{\partial=0}[-l]\hookrightarrow
(\Omega^{\bullet}_{\mathbb P^N}\otimes_{O_{\mathbb P^N}}F^{\bullet-p}j_*O_U)^{an}
\end{equation*}
is an equivalence usu local (see \cite{Voisin}), and the factorization in $C(U^{an})$
\begin{eqnarray*}
OL^l_{U^{an},0}:\Omega^l_{U^{an},\log,0,v}[-l]:=\Omega^l_{U^{an},\log,0}/j^*\Omega^l_{\mathbb P^{N,an},\log,0}[-l]
\xrightarrow{\widehat{OL_{U^{an},0}^l}[-l]} \\
\Omega^l_{\mathbb P^{N,an}}(\log X)^{\partial=0}/j^*\Omega^l_{\mathbb P^{N,an}}[-l]
\xrightarrow{\iota^{\geq l}\circ\iota_{\partial}}\Omega^{\bullet}_{U^{an}}/j^*\Omega^l_{\mathbb P^{N,an}}.
\end{eqnarray*}

We will use the following lemma

\begin{lem}\label{Dlem}
Let $X=V(f)\subset\mathbb P^N$, $N=2p+1$, be a smooth projective hypersurface. Denote $U:=\mathbb P^N\backslash X$.
Let $\mathbb P^{N,an}=\cup_{i=1}^nB_i$ an open cover, with $\phi_i:B_i=B(c_i,r_i)\simeq D(0,1)^N$ open balls such that 
\begin{equation*}
\phi_i(X\cap B_i)=\left\{0\right\}\times D(0,1)^{N-1} \; \mbox{ and } \; \phi_i(z):=\phi_i(z_0,z_1,\ldots,z_N)=(z_0,\ldots,f(z),\ldots,z_N). 
\end{equation*}
Note that since $\mathbb P^{N,an}$ is compact, up to take finite subcover, we can assume that this cover is finite. 
Denote, for $I\subset[1,\ldots,n]$, $B_I:=\cap_{i\in I}B_i$ and consider $B(c_I,r_I)\subset B_I$ a ball of maximal radius contained in $B_I$.
For $I\in(n,q)$, $\nu\in(N,r)$ and $n=(n_1,\cdots,n_N)\in\mathbb N^N$, we denote, for $\gamma_I\in C^{\diff}_{r+2}(B_I\backslash X^{an},\mathbb Z)$
\begin{equation*}
P_{n,\nu}(\gamma_I):=\int_{\gamma_I}d(z_1-c_1)^{n_1}\cdots (z_N-c_N)^{n_N}\wedge dz_{\nu_1}\wedge\cdots\wedge dz_{\nu_{p-1}}\wedge df/f, \; c_I:=(c_1,\cdots,c_N).
\end{equation*}
Then, we have  
\begin{itemize}
\item[(i)]For $\gamma_I\in C^{\diff}_{r+2}(B_I\backslash X^{an},\mathbb Z)$ such that 
$\partial\gamma_I\in\bigoplus_{J\supset I}C^{\diff}_{r+1}(B_J\backslash X^{an},\mathbb Z)$ and 
\begin{eqnarray*}
P_{n,\nu}(\gamma_I)=\sum_{J\supset I}d_{n,\nu,J}, \; \mbox{for \,all} \; n\in\mathbb N^N, \nu\in(N,r) \\ 
(d_{n,\nu,J})\in\mathbb C^{\mathbb N^N\times(n,q+1)\times(N,r)}, \, |d_{n,\nu,J}|\leq (r_J-|c_I-c_J|)^n \; \mbox{for \,each} \; J\supset I,
\end{eqnarray*}
there exists $(\gamma'_J)\in\bigoplus_{J\supset I} C^{\diff}_{r+2}(B_J\backslash X^{an},\mathbb Z)$ such that
\begin{equation*}
P_{n,\nu}(\gamma_I)=\sum_{J\supset I}P_{n,\nu}(\gamma'_J) \mbox{for \,all} \; n\in\mathbb N^N, \nu\in(N,r).
\end{equation*}
\item[(ii)]For $\gamma=(\gamma_I)\in C^{\diff}_{\bullet}(B_{\bullet}\backslash X^{an},\mathbb Q)^{\partial_{\bullet}+\partial=0}$ 
such that $ev(U)(\xi)(\gamma)=0$ for all 
\begin{equation*}
\xi=(\xi_I)\in\Gamma(B_{\bullet}\backslash X,\Omega^{p+1}_{\mathbb P^N,\log,0}\otimes\mathbb C)^{\partial_{\bullet}=0},
\end{equation*}
there exists $\gamma'=(\gamma'_J)\in C^{\diff}_{\bullet}(B_{\bullet}\backslash X^{an},\mathbb Q)$ 
such that $ev(U)(w)(\gamma)=ev(U)(w)(\partial_{\bullet}\gamma')$ for all $w=(w_I)\in\Gamma(B_{\bullet},\Omega_{\mathbb P^N}^{p+1,\partial=0}(\log X))$.
\end{itemize}
\end{lem}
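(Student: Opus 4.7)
\medskip

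The geometric fact underlying both parts is that, in the chosen coordinates on $B_i$, the complement $B_I \setminus X$ of the coordinate hyperplane $\{w_0 = 0\}$ in a polydisc is homotopy equivalent to $S^1$, so its singular cohomology vanishes in degrees $\ge 2$. Consequently every closed logarithmic $(r+2)$-form on $B_I \setminus X$ with $r \ge 0$ is exact, and integration against $(r+2)$-chains is entirely controlled by the residue along the generator of $H_1$. This is the same mechanism already exploited in lemma \ref{OBac} and proposition \ref{UXusu}(i), and I plan to reuse it systematically.

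For (i), the bound $|d_{n,\nu,J}| \le (r_J - |c_I - c_J|)^{n}$ is exactly the Cauchy-type estimate guaranteeing that the formal generating series $\sum_n d_{n,\nu,J}(z-c_J)^n$ is holomorphic on the maximal polydisc $B(c_J, r_J-|c_I-c_J|) \subset B_J$ centered at $c_J$ and containing $c_I$. This makes the assignment $\omega_{n,\nu} \mapsto d_{n,\nu,J}$ a continuous linear functional on the Fr\'echet space of closed logarithmic $(r+2)$-forms on $B_J \setminus X$. Combining the exactness of such forms with Stein duality between continuous functionals on closed forms and differentiable chains, I realize this functional as integration against some $\gamma'_J \in C^{\diff}_{r+2}(B_J \setminus X, \mathbb Q)$; concretely, one uses the Poincar\'e homotopy on the polydisc $B_J$ to write each $\omega_{n,\nu}|_{B_J} = d\eta_{n,\nu,J}$ with controlled norms, then pairs with a chain supported in a tubular neighbourhood of the loop around $X \cap B_J$ to adjust the residue to $d_{n,\nu,J}$. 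The difference $\gamma_I - \sum_{J \supset I} \gamma'_J$ then pairs trivially with every $\omega_{n,\nu}$, and density of these basic forms in closed logarithmic $(r+2)$-forms (via Taylor expansion at $c_J$) gives the desired equality of periods.

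For (ii), I proceed by descending induction on the maximum Čech degree $q$ occurring in the support of $\gamma = (\gamma_I)$. The hypothesis that $ev(U)(\xi)(\gamma) = 0$ for every $\partial_{\bullet}$-closed $\xi$ with values in $\Omega^{p+1}_{\mathbb P^N, \log, 0} \otimes \mathbb C$, combined with the total-cycle relation $(\partial_{\bullet} + \partial)\gamma = 0$, translates (after expanding each $\xi$ on overlapping polydiscs in the Taylor basis $\omega_{n,\nu}$) into a decomposition of each top period $P_{n,\nu}(\gamma_I)$ as $\sum_{J \supset I} d_{n,\nu,J}$, with coefficients inheriting the Cauchy bound from the holomorphy of $\xi$ on $B_J$. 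Invoking (i) produces chains $\gamma'_J \in C^{\diff}_{r+2}(B_J \setminus X, \mathbb Q)$ whose $\partial_{\bullet}$-image reproduces $\gamma_I$ at the top Čech level, modulo period pairings against $\partial$-closed logarithmic-pole forms. Subtracting $\partial_{\bullet}$ of these $\gamma'_J$ from $\gamma$ strictly decreases the maximum Čech degree, and the induction terminates because the cover is finite.

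The main obstacle is the moment realisation in (i): upgrading the formal data $(d_{n,\nu,J})$ satisfying a Cauchy bound into an honest differentiable chain $\gamma'_J$ with the prescribed periods. This requires explicit Stein-duality machinery on $B_J \setminus X$, combining the Poincar\'e homotopy on the polydisc with a careful construction of the tubular chain capturing the residue; the bound is what guarantees the functional is continuous, but exhibiting a chain rather than a mere distribution is the analytic heart of the argument. A secondary, purely combinatorial difficulty in (ii) is matching the Čech signs and restrictions so that the decomposition extracted from the vanishing hypothesis exactly fits the hypotheses of (i) at each step of the induction.
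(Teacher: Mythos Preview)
Your approach to (i) is genuinely different from the paper's, and the difference exposes a gap. You propose to interpret the data $(d_{n,\nu,J})_n$ as a continuous linear functional on closed logarithmic forms via the Cauchy estimate, and then invoke Stein duality to produce a chain $\gamma'_J$ with the prescribed periods. The problem is that the dual of a Fr\'echet space of closed forms consists of currents, not differentiable chains, and certainly not chains with \emph{integer} coefficients as the statement of (i) demands. You flag this yourself as ``the main obstacle'' but do not resolve it; without a concrete mechanism bridging the continuous functional and an honest element of $C^{\diff}_{r+2}(B_J\setminus X^{an},\mathbb Z)$, the proof of (i) does not close.

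The paper takes an elementary route that avoids functional analysis entirely. It first applies Stokes to write $P_{n,\nu}(\gamma_I)=Q_{n,\nu}(\partial\gamma_I)$, where $Q_{n,\nu}$ integrates the primitive monomial form. Since $\gamma_I$ has integer coefficients, one decomposes $\partial\gamma_I=\sum_k\eta_k^+-\sum_k\eta_k^-$ as a signed sum of individual cubes, each lying in some $B_J\setminus X$ with $J\supset I$. The growth bound $|d_{n,\nu,J}|\le(r_J-|c_I-c_J|)^n$ is then used not analytically but \emph{combinatorially}: it forces, in every connected component of $\bigcup_{J\supset I}B_J$, the number of positive cubes to equal the number of negative cubes, since an imbalance would make $|P_{n,\nu}(\gamma_I)|$ grow faster in $n$ than the hypothesis allows. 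Once the cubes are balanced within each component, one connects each $\eta_k^+$ to a matching $\eta_k^-$ by an integer chain $\gamma'_{J,k}$ supported in that component (mere connectedness of $B_J\setminus X$). Then $\partial\gamma_I=\sum_{J\supset I}\partial\gamma'_J$ and a second application of Stokes gives the conclusion; integer coefficients come for free.

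For (ii) your outline is close in spirit, though the paper does not actually run a descending induction on the \v{C}ech degree: after clearing denominators it passes to the normalized chain complex, reads off from the vanishing hypothesis the decomposition $P_{n,\nu}(\gamma_I)=(\partial_\bullet d_{n,\nu,J})_I$ together with the Cauchy bound, and applies (i) once. Your inductive framing is reasonable, but its content rests entirely on a valid proof of (i).
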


\begin{proof}
\noindent(i)For $I\in(n,q)$, $\nu\in(N,r)$ and $n=(n_1,\cdots,n_N)\in\mathbb N^N$, we denote, for $\eta_I\in C^{\diff}_{r+1}(B_I\backslash X^{an},\mathbb Z)$
\begin{equation*}
Q_{n,\nu}(\eta_I):=\int_{\eta_I}(z_1-c_1)^{n_1}\cdots (z_N-c_N)^{n_N}\wedge dz_{\nu_1}\wedge\cdots\wedge dz_{\nu_{p-1}}\wedge df/f, \; c_I:=(c_1,\cdots,c_N)
\end{equation*}
We have then $P_{n,\nu}(\gamma_I)=Q_{n,\nu}(\partial\gamma_I)$.
As $\gamma_I$ have integer coefficients, we consider the splitting
\begin{equation*}
\partial\gamma_I=\sum_{k=1}^e\eta_k^+-\sum_{k=1}^e\eta_k^-, \eta_k^+,\eta_k^-\in\Hom_{\Diff}(\mathbb I^{r+1},B_I\backslash X^{an}),
\end{equation*} 
where $\mathbb I^m:=[0,1]^m\subset\mathbb R^m$ (we use the cubical version instead of the triangle version of simplicial realization).
We have then
\begin{equation*}
P_{n,\nu}(\gamma_I)=\sum_{k=1}^eQ_{n,\nu}(\eta_k^+)-\sum_{k=1}^eQ_{n,\nu}(\eta_k^-), 
\end{equation*} 
Now we split $\cup_{J\supset I}B_J$ into connected components : $\cup_{J\supset I}B_J=\sqcup_{l=1}^s(\cup_{J\supset I_l}B_J)$.
Since 
\begin{eqnarray*}
P_{n,\nu}(\gamma_I)=\sum_{J\supset I}d_{n,\nu,J}, \; \mbox{for \,all} \; n\in\mathbb N^N, \nu\in(N,r) \\ 
(d_{n,\nu,J})\in\mathbb C^{\mathbb N^N\times(n,q+1)\times(N,r)}, \, |d_{n,\nu,J}|\leq (r_J-|c_I-c_J|)^n \; \mbox{for \,each} \; J\supset I 
\end{eqnarray*}
we have
\begin{equation*}
card(k\in[1,\cdots,e], \eta_k^+\in\cup_{J\supset I_l}B_J)=card(k\in[1,\cdots,e], \eta_k^-\in\cup_{J\supset I_l}B_J)=q_l.
\end{equation*} 
Indeed otherwise we have
\begin{equation*}
|P_{n,\nu}(\gamma_I)|>\sum_{J\subset I}(r_J-|c_I-c_J|)^n 
\end{equation*} 
since then there exists $J',J''\supset I$ such that $|c_{J'}-c_{J''}|>r_{J'}+r_{J''}$ contradiction.
Hence, for each $1\leq l\leq s$ and $1\leq k\leq q_l$, there exist $\gamma'_{J,k}\in\Hom_{\Diff}(\mathbb I^{r+2},B_J\backslash X^{an})$, $J\supset I_l$ such that 
\begin{equation*}
\eta_k^+-\eta_k^-=\sum_{J\supset I_l}\partial\gamma'_{J,k}
\end{equation*}
Denote for $J\supset I_l$, $\gamma'_J:=\sum_{k=1}^{q_l}\gamma'_{J,k}$. We have then
\begin{equation*}
\partial\gamma_I=\sum_{l=1}^s\sum_{k=1}^{q_l}\eta_k^+-\eta_k^-=
\sum_{l=1}^s\sum_{k=1}^{q_l}\sum_{J\supset I_l}\partial\gamma'_{J,k}=\sum_{J\supset I}\partial\gamma'_J.
\end{equation*} 
Hence,
\begin{equation*}
P_{n,\nu}(\gamma_I)=Q_{n,\nu}(\partial\gamma_I)=\sum_{J\supset I}Q_{n,\nu}(\partial\gamma'_J)=\sum_{J\supset I}P_{n,\nu}(\gamma'_J),
\mbox{for \,all} \; n\in\mathbb N^N, \nu\in(N,r).
\end{equation*}

\noindent(ii):Up to clear denominators, that is replace $\gamma$ by $m\gamma$ with some $m\in\mathbb N$, 
we may assume that $\gamma=(\gamma_I)\in C^{\diff}_{\bullet}(B_{\bullet}\backslash X^{an},\mathbb Z)^{\partial_{\bullet}+\partial=0}$.
So let  $\gamma=(\gamma_I)\in C^{\diff}_{\bullet}(B_{\bullet}\backslash X^{an},\mathbb Z)^{\partial_{\bullet}+\partial=0}$
such that $ev(U)(\xi)(\gamma)=0$ for all $\xi=(\xi_I)\in\Gamma(B_{\bullet}\backslash X,\Omega^{p+1}_{\mathbb P^N,\log,0}\otimes\mathbb C)^{\partial_{\bullet}=0}$.
For 
\begin{equation*}
w=(w_I)=(\sum_{\nu\in(2p+1,p-1)}dh_{I,\nu}\wedge dz_{\nu_1}\wedge\cdots\wedge dz_{\nu_{p-1}})\wedge df/f\in\Gamma(B_{\bullet},\Omega_{\mathbb P^N}^{p+1,\partial=0}(\log X), 
\end{equation*}
where $h_{I,\nu}=\sum_{n=(n_1,\cdots,n_N)\in\mathbb N^N}a_{n,\nu,I}(z_1-c_1)^{n_1}\cdots (z_N-c_N)^{n_N}\in O(B_I)$, we have 
\begin{eqnarray*}
ev(U)(w)(\gamma)= \\
\sum_{n=(n_1,\cdots,n_N)\in\mathbb N^N}\sum_{\nu\in(2p+1,p-1)}\sum_{I\in(n,p)}a_{n,\nu,I}
\int_{\gamma_I}d(z_1-c_1)^{n_1}\cdots (z_N-c_N)^{n_N}\wedge dz_{\nu_1}\wedge\cdots\wedge dz_{\nu_{p-1}}\wedge df/f \\
=\sum_{n=(n_1,\cdots,n_N)\in\mathbb N^N}\sum_{\nu\in(2p+1,p-1)}\sum_{I\in(n,p)}a_{n,\nu,I}P_{n,\nu}(\gamma_I). 
\end{eqnarray*}
Note that  $\gamma_I\in C^{\diff}_{p+1}(B_I\backslash X^{an},\mathbb Z)$ is not closed.
By hypothesis if $w=\xi+\partial_{\bullet}\eta$ with $\xi=(\xi_I)\in\Gamma(B_{\bullet}\backslash X,\Omega^{p+1}_{\mathbb P^N,\log,0}\otimes\mathbb C)^{\partial_{\bullet}=0}$, that is if 
\begin{equation*}
exp((\partial_{\bullet}(h_{I,\nu}))_J)=f^q+(\partial_{\bullet}(\eta))_{J,\nu} \; \mbox{or equivalently} \; 
l_{J,\nu}:=(\partial_{\bullet}(h_{I,\nu}))_J=log(f^q+(\partial_{\bullet}(\eta))_{J,\nu})
\end{equation*}
for some $q\in\mathbb N$ and each $J\in(n,p+1)$, $\nu\in(2p+1,p-1)$, we have $ev(U)(w)(\gamma)=0$.
Equivalently there exists 
\begin{eqnarray*}
(d_{n,\nu,J})\in\mathbb C^{\mathbb N^N\times(2p+1,p-1)\times(n,p+1)}, \, |d_{n,\nu,J}|\leq (r_J-|c_I-c_J|)^n \; \mbox{for \,each} \; J\supset I,  
\end{eqnarray*}
such that 
\begin{equation*}
\sum_{n\in\mathbb N^N}\sum_{\nu\in(2p+1,p-1)}\sum_{I\in(n,p)}a_{n,\nu,I}P_{n,\nu}(\gamma_I)=
\sum_{n\in\mathbb N^N}\sum_{\nu\in(2p+1,p-1)}\sum_{J\in(n,p+1)}(\partial_{\bullet}a_{n,\nu,I})_Jd_{n,\nu,J}
\end{equation*}
for all $(a_{n,\nu,I})\in\mathbb C<z_1,\cdots,z_N>^{(2p+1,p-1)\times(n,p)}\subset\mathbb C^{\mathbb N^N\times(2p+1,p-1)\times(n,p)}$ convergent of radius at least $r_I$  
(that is the map $ev(U)(\gamma)$ through $\partial_{\bullet}$).
Note that for $J\supset I$, $|z-c_J|\leq r_J$ implies that $|z-c_I|\leq r_J-|c_I-c_J|$, 
and that $\sum_{n\in\mathbb N^N}(\partial_{\bullet}a_{n,\nu,I})_Jz^n$ converge if and only if $|z-c_J|\leq r_J$, hence $|d_{n,\nu,J}|\leq (r_J-|c_I-c_J|)^n$
since there exists $(a_{n,\nu,I})\in\mathbb C<z_1,\cdots,z_N>^{(2p+1,p-1)\times(n,p)}$ such that $(\partial_{\bullet}a_{n,\nu,I})_J$ is of radius exactly $r_J$. 
Hence
\begin{equation*}
\sum_{n\in\mathbb N^N}\sum_{\nu\in(2p+1,p-1)}\sum_{I\in(n,p)}a_{n,\nu,I}(P_{n,\nu}(\gamma_I)-(\partial_{\bullet}d_{n,\nu,J})_I)=0
\end{equation*}
for all $(a_{n,\nu,J})\in\mathbb C<z_1,\cdots,z_N>^{(2p+1,p-1)\times(n,p)}$, that is 
\begin{eqnarray*}
P_{n,\nu}(\gamma_I)=(\partial_{\bullet}d_{n,\nu,J})_I, \, |d_{n,\nu,J}|\leq (r_J-|c_I-c_J|)^n \; \mbox{for \,each} \; J\supset I \\
\mbox{for \,all} \; n\in\mathbb N^N,\nu\in(2p+1,p-1),I\in(n,p).
\end{eqnarray*}
Up to replace $(\gamma_I)$ by $(\gamma_I)+\partial_{\bullet}(\gamma''_J)+(\partial\gamma'_I)$ for some $(\gamma''_J)$ and some $(\gamma'_I)$, 
we may consider the normalized chain complex of $C^{\diff}_{p+1}(B_{\bullet}\backslash X^{an},\mathbb Z)/\partial$ and assume that 
\begin{equation*}
(\gamma_I)\in N(C^{\diff}_{p+1}(B_{\bullet}\backslash X^{an},\mathbb Z)/\partial). 
\end{equation*}
Then by (i) there exists $\gamma':=(\gamma'_J)\in\bigoplus_{J\supset I} C^{\diff}_{p+1}(B_J\backslash X^{an},\mathbb Z)$ such that
\begin{equation*}
P_{n,\nu}(\gamma_I)=P_{n,\nu}(\sum_{J\supset I}\gamma'_J) \;  \mbox{for \,all} \; n\in\mathbb N^N,\nu\in(2p+1,p-1).
\end{equation*} 
Note that of course $\gamma\neq\partial_{\bullet}\gamma'$ since $[\gamma]\in H^N(U^{an},\mathbb Q)$ is a non trivial class.
This gives
\begin{eqnarray*}
ev(U)(w)(\gamma)&=&\sum_{n\in\mathbb N^N}\sum_{\nu\in(2p+1,p-1)}\sum_{I\in(n,p)}a_{n,\nu,I}P_{n,\nu}(\gamma_I) \\
&=&\sum_{n\in\mathbb N^N}\sum_{\nu\in(2p+1,p-1)}\sum_{I\in(n,p)}a_{n,\nu,I}P_{n,\nu}(\sum_{J\supset I}\gamma'_J)=ev(w)(\partial_{\bullet}\gamma').
\end{eqnarray*}

\end{proof}

The main result of this section is the following :

\begin{prop}\label{HUQ}
Let $X=V(f)\subset\mathbb P^N$, $N=2p+1$, be a smooth projective hypersurface. Denote $U:=\mathbb P^N\backslash X$.
We have 
\begin{equation*}
F^{p+1}H^N(U^{an},\mathbb Q):=ev(U)(F^{p+1}H^N_{DR}(U))\cap H^N_{\sing}(U^{an},\mathbb Q)=OL^{N}_{U^{an},0}(H_{usu}^p(U^{an},\Omega_{U^{an},\log,0}^{N-p})).
\end{equation*}
That is for each $w\in F^{p+1}H^N_{DR}(U)$, the following assertions are equivalent
\begin{itemize}
\item[(i)] $ev(U)(w)\in H_{\sing}^N(U^{an},2i\pi\mathbb Q)$,
\item[(ii)] $w\in OL^{N}_{U^{an},0}(H_{usu}^p(U^{an},\Omega_{U^{an},\log,0}^{N-p}))$.
\end{itemize}
\end{prop}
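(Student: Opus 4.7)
The plan is to establish both implications via \v{C}ech representatives on the cover $\mathbb{P}^{N,an} = \cup_{i=1}^n B_i$ by the balls of Lemma~\ref{Dlem}. By Lemma~\ref{OBac} this cover is Leray for the relevant sheaves (both $(\Omega^{\bullet}_{\mathbb{P}^N}\otimes_{O_{\mathbb{P}^N}}F^{\bullet-p}j_*O_U)^{an}$ on $\mathbb{P}^{N,an}$ and the logarithmic sheaves on $U^{an}$), and by Definition~\ref{HUDR} the isomorphism $B_\partial$ represents a class $w\in F^{p+1}H^N_{DR}(U)$ by a \v{C}ech $(p+1)$-cocycle $(w_I)_{|I|=p+1}$ with $w_I\in\Omega^{p}_{\mathbb{P}^{N,an}}(\log X)^{\partial=0}(B_I)$. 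Throughout, the dual pairing of interest is $ev(U)$ between such \v{C}ech cocycles and \v{C}ech-singular cycles $(\gamma_I)$ on $B_\bullet\backslash X^{an}$.

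For (ii)$\Rightarrow$(i), I would write $w=OL^N_{U^{an},0}(\omega)$ with $\omega\in H^p_{usu}(U^{an},\Omega^{N-p}_{U^{an},\log,0})$, and by Lemma~\ref{OBac}(iii) realise $\omega$ as a \v{C}ech cocycle $(\omega_J)_{|J|=p+1}$ whose components are $\mathbb{Q}$-linear combinations of elementary log forms $\bigwedge_{\ell=1}^{N-p}df_{J,k,\ell}/f_{J,k,\ell}$ for units $f_{J,k,\ell}\in O(B_J\cap U)^*$. For any integral $N$-cycle $\gamma$ on $U^{an}$ refined to the cover, the period $ev(U)(w)(\gamma)$ is a finite $\mathbb{Q}$-sum of iterated residue integrals, each of the form $(2\pi i)^{N-p}n_\gamma$ with $n_\gamma\in\mathbb{Z}$, placing $ev(U)(w)$ in the rational subspace of the statement.

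The main direction is (i)$\Rightarrow$(ii), which I would argue by duality. Assume for contradiction that $w\in F^{p+1}H^N_{DR}(U)$ has rational periods but $w\notin OL^N_{U^{an},0}(H^p_{usu}(U^{an},\Omega^{N-p}_{U^{an},\log,0}))$. The rational periods hypothesis, together with nondegeneracy of the period pairing over $\mathbb{Q}$, produces a \v{C}ech-singular $\mathbb{Q}$-cycle
\[
\gamma=(\gamma_I)\in C^{\diff}_\bullet(B_\bullet\backslash X^{an},\mathbb{Q})^{\partial_\bullet+\partial=0}
\]
satisfying $ev(U)(\xi)(\gamma)=0$ for every log cocycle $\xi\in\Gamma(B_\bullet\backslash X,\Omega^{p+1}_{\mathbb{P}^N,\log,0}\otimes\mathbb{C})^{\partial_\bullet=0}$ yet $ev(U)(w)(\gamma)\neq 0$. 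Applying Lemma~\ref{Dlem}(ii) to this $\gamma$ yields $\gamma'=(\gamma'_J)$ with $ev(U)(w')(\gamma)=ev(U)(w')(\partial_\bullet\gamma')$ for all $w'\in\Gamma(B_\bullet,\Omega^{p+1,\partial=0}_{\mathbb{P}^N}(\log X))$. Specialising to $w'=w$ viewed in the total complex (bridging from form degree $p$ to $p+1$ via $\partial$-exactness on balls, Lemma~\ref{OBac}(ii)), the right-hand side vanishes by \v{C}ech-Stokes since $w$ is \v{C}ech closed, forcing $ev(U)(w)(\gamma)=0$ and giving a contradiction.

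The principal obstacle is the duality step: the existence of a $\mathbb{Q}$-rational cycle $\gamma$ separating $w$ from the log image is delicate because the log image is only a $\mathbb{Q}$-subspace of the ambient $\mathbb{C}$-vector space $F^{p+1}H^N_{DR}(U)$, and the rational-periods hypothesis enters precisely to guarantee that such a separating cycle can be chosen with $\mathbb{Q}$-coefficients so that Lemma~\ref{Dlem}(ii) applies. A secondary technical point is reconciling the degree bookkeeping between the cocycle supplied by $B_\partial$ (\v{C}ech degree $p+1$, form degree $p$) and the $\Omega^{p+1,\partial=0}(\log X)$ appearing in Lemma~\ref{Dlem}(ii), which is carried out via the logarithmic differential $O^*\to\Omega^1_{\log}$ combined with Lemma~\ref{OBac}(i)--(iii).
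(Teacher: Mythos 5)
Your treatment of the hard direction (i)$\Rightarrow$(ii) is essentially the paper's: represent $w$ by a \v{C}ech cocycle via $B_{\partial}$ (Definition \ref{HUDR}) using Lemma \ref{OBac}, reduce the statement to the existence of a rational separating cycle (the paper's assertion (D)), and conclude with Lemma \ref{Dlem}(ii) together with $\partial_{\bullet}$-closedness of $w$. But there is a genuine gap in your (ii)$\Rightarrow$(i), and it propagates. When an integral $N$-cycle $\gamma$ is refined to the cover, its components $\gamma_I\in C^{\diff}_{\bullet}(B_I\backslash X^{an},\mathbb Z)$ are \emph{chains, not cycles}: their boundaries are absorbed by the \v{C}ech differential. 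Hence an individual term $\int_{\gamma_I}\bigwedge d\log f_{J,k,\ell}$ is an arbitrary complex number, not $(2\pi i)^{N-p}$ times an integer (in fact $H_{N-p}(B_I\backslash X^{an})=0$ here, so there are no interesting closed chains locally at all); only the total sum could be rational, and that is exactly what has to be proved. The paper obtains this inclusion by a different route: since the residue map $Res_{X,\mathbb P^N}$ of Definition \ref{ResMot} is motivic and preserves logarithmic classes, a class in $OL^N_{U^{an},0}(H^p_{usu}(U^{an},\Omega^{N-p}_{U^{an},\log,0}))$ corresponds to a logarithmic class of type $(p,p)$ on $X$, which by Proposition \ref{cGAGAlog}(ii) is the class of an algebraic cycle and therefore rational; rationality of $ev(U)$ of the log image follows.

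This matters for your main argument as well. To produce a separating cycle $\gamma$ with $\mathbb Q$-coefficients annihilating $ev(U)\bigl(OL^N_{U^{an},0}(\cdots)\otimes\mathbb C\bigr)$ but not $ev(U)(w)$, you need the $\mathbb C$-span of the image of the logarithmic classes under $ev(U)$ to be defined over $\mathbb Q$ inside $H^N_{\sing}(U^{an},\mathbb C)$; this is precisely the inclusion (ii)$\Rightarrow$(i), not a consequence of the rational-periods hypothesis on $w$ alone, contrary to what your last paragraph suggests. Without it, $w\notin OL^N_{U^{an},0}(\cdots)$ does not rule out $ev(U)(w)$ lying in the rational closure of the complexified log image, and no rational separating $\gamma$ need exist. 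Once (ii)$\Rightarrow$(i) is established as in the paper, your contradiction argument coincides with the paper's (D) and its reduction to Lemma \ref{Dlem}(ii), so the fix is to replace your direct period computation by the argument via $Res_{X,\mathbb P^N}$ and Proposition \ref{cGAGAlog}(ii). A minor point: $B_{\partial}$ lands in $H^p_{usu}(\mathbb P^N,\Omega^{N-p}_{\mathbb P^{N,an}}(\log X)^{\partial=0})$, i.e.\ \v{C}ech degree $p$ and form degree $N-p=p+1$, which is already the degree occurring in Lemma \ref{Dlem}(ii); your stated form degree $p$ is off by one and no extra ``bridging'' step is needed.
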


\begin{proof}
\noindent We first show that 
\begin{equation*}
F^{p+1}H^N(U^{an},\mathbb Q)\supset OL^{N}_{U^{an},0}(H_{usu}^p(U^{an},\Omega_{U^{an},\log,0}^{N-p})), 
\end{equation*}
that is for each $w\in F^{p+1}H^N_{DR}(U)$ (ii) implies (i): 
Follows from proposition \ref{cGAGAlog}(ii) for $X$ by the map $Res_{X,\mathbb P^N}:M(X)\to M(U)[-1]$ given in definition \ref{ResMot}.

\noindent We now prove that 
\begin{equation*}
F^{p+1}H^N(U^{an},\mathbb Q)=OL^{N}_{U^{an},0}(H_{usu}^p(U^{an},\Omega_{U^{an},\log,0}^{N-p})), 
\end{equation*}
that is for each $w\in F^{p+1}H^N_{DR}(U)$ (i) implies (ii): We will show that 
\begin{eqnarray*}
ev(U)(F^{p+1}H^N_{DR}(U))^{\perp}\cap H_N^{\sing}(U^{an},\mathbb Q)\supset 
ev(U)(OL^{N}_{U^{an},0}(H_{usu}^p(U^{an},\Omega_{U^{an},\log,0}^{N-p}))\otimes\mathbb C)^{\perp}\cap H_N^{\sing}(U^{an},\mathbb Q) \, \mbox{(D)}
\end{eqnarray*}
that is if for $\gamma\in H_N^{\sing}(U^{an},\mathbb Q)$ we have $ev(U)(\xi)(\gamma)=0$ for all $\xi\in H_{usu}^p(U^{an},\Omega_{U^{an},\log,0}^{N-p}\otimes\mathbb C)$
then $ev(U)(\omega)(\gamma)=0$ for all $\omega\in F^{p+1}H_{DR}(U)$, which is equivalent to 
\begin{equation*}
F^{p+1}H^N(U^{an},\mathbb Q)=OL^{N}_{U^{an},0}(H_{usu}^p(U^{an},\Omega_{U^{an},\log,0}^{N-p})).
\end{equation*}
Indeed if $\omega\neq 0\in F^{p+1}H^N(U^{an},\mathbb Q)\backslash OL^{N}_{U^{an},0}(H_{usu}^p(U^{an},\Omega_{U^{an},\log,0}^{N-p}))$, there would exist
$\gamma\in H_N^{\sing}(U^{an},\mathbb Q)$ such that 
\begin{equation*}
ev(U)(\omega)(\gamma)\neq 0  \; \mbox{ and } \; ev(U)(OL^{N}_{U^{an},0}(H_{usu}^p(U^{an},\Omega_{U^{an},\log,0}^{N-p})))(\gamma)=0, 
\end{equation*}
hence (D) does not hold. 
On the other hand, if there exist $\gamma\in H_N^{\sing}(U^{an},\mathbb Q)$ such that 
\begin{equation*}
ev(U)(OL^{N}_{U^{an},0}(H_{usu}^p(U^{an},\Omega_{U^{an},\log,0}^{N-p}))\otimes\mathbb C)(\gamma)=0 \; \mbox{ and } \;  ev(U)(F^{p+1}H_{DR}(U))(\gamma)\neq 0, 
\end{equation*}
considering a splitting 
\begin{equation*}
H_N^{\sing}(U^{an},\mathbb Q)=ev(U)(F^{p+1}H_{DR}(U))^{\perp}\cap H_N^{\sing}(U^{an},\mathbb Q)\oplus L,
\end{equation*}
there exists $\omega'\in F^{p+1}H^N(U^{an},\mathbb Q)$ such that $ev(U)(\omega')(\gamma)\neq 0$, giving 
\begin{equation*}
\omega'\neq 0\in F^{p+1}H^N(U^{an},\mathbb Q)\backslash OL^{N}_{U^{an},0}(H_{usu}^p(U^{an},\Omega_{U^{an},\log,0}^{N-p})).
\end{equation*}
Let $\mathbb P^{N,an}=\cup_{i=1}^nB_i$ an open cover, with $\phi_i:B_i\simeq D(0,1)^N$ open balls such that 
\begin{equation*}
\phi_i(X\cap B_i)=\left\{0\right\}\times D(0,1)^{N-1} \; \mbox{ and } \; \phi_i(z):=\phi_i(z_0,z_1,\ldots,z_N)=(z_0,\ldots,f(z),\ldots,z_N). 
\end{equation*}
Note that since $\mathbb P^{N,an}$ is compact, up to take finite subcover, we can assume that this cover is finite.
Denote, for $I\subset[1,\ldots,n]$, $l_I:B_I\hookrightarrow\mathbb P^N$ the open embeddings, with $B_I:=\cap_{i\in I}B_i$. 
Denote for $a,b\in\mathbb N$, $(a,b)$ the set of subsets of $b$ elements in a set of $a$ elements.
Then, using lemma \ref{OBac}, 
\begin{eqnarray*}
w=[B(w)]\in F^{p+1}H^N_{DR}(U), \\
B(w)=(w_I)=(\sum_{\nu\in(2p+1,p-1)}dh_{I,\nu}\wedge dz_{\nu_1}\wedge\cdots\wedge dz_{\nu_{p-1}})\wedge df/f
\in\Gamma(B_{\bullet},\Omega_{\mathbb P^N}(\log X)^{p+1,\partial=0})^{\partial_{\bullet}=0}, \\
h_{I,\nu}\in O(B_I), \; I\in(n,p),
\end{eqnarray*} 
which gives $w=(w_I)=\in\Gamma(B_{\bullet}\backslash X,\Omega_{U,\log,0}^{p+1})$ since on open balls all closed forms are exact and logarithmic (see remark \ref{ballclog}).
The evaluation map
\begin{eqnarray*}
ev(U):=ev(U)^{\bullet}_{\bullet}:\Gamma(B_{\bullet}\backslash X^{an},\Omega_{\mathbb P^{N,an}}^{\bullet})\to
C_{\diff}^{\bullet}(B_{\bullet}\backslash X^{an},\mathbb C),
\end{eqnarray*}
is a morphism of bi-complexes. On the other hand, we have the following canonical map
\begin{eqnarray*}
BC_{\partial}:H^p\Gamma(B_{\bullet}\backslash X^{an},C_{U^{an},\diff}^{N-p,\partial})
\xrightarrow{\Gamma(B_{\bullet}\backslash X^{an},\iota_{\partial})}
H^N\Gamma(B_{\bullet}\backslash X^{an},C_{U^{an},\diff}^{\bullet\geq N-p})
\xrightarrow{\Gamma(B_{\bullet}\backslash X^{an},\iota^{\geq N-p})} \\
H^N\Gamma(B_{\bullet}\backslash X^{an},C_{U^{an},\diff}^{\bullet})
\xrightarrow{(H^N(l_i^*=:C_{U^{an},\diff}^{\bullet}(l_i))_{1\leq i\leq n})^{-1}}
H^N_{\sing}(U^{an},\mathbb Q)=H^N\Gamma(U^{an},C_{U^{an},\diff}^{\bullet}),
\end{eqnarray*} 
where $H^N(l_i^*)$ is an isomorphism by the acyclicity of 
$C_{U^{an},\diff}^{j}\in\PSh(U^{an})$ for each $j\in\mathbb Z$, and 
\begin{equation*}
C_{U^{an},\diff}^{\bullet}\hookrightarrow C_{U^{an},\sing}^{\bullet}, \; \mbox{for} \; V\subset U^{an}, \; 
C^{\bullet}_{\diff}(V)\hookrightarrow C^{\bullet}_{\sing}(V) 
\end{equation*}
is the canonical quasi-isomorphism in $C(U^{an})$. We then have the following commutative diagram
\begin{equation*}
\xymatrix{
H^N\iota^{\geq p}H^N\Gamma(\mathbb P^N,\Omega^{\bullet}_{\mathbb P^N}\otimes_{O_{\mathbb P^N}}F^{\bullet-p}j_*O_U)
\ar[rrd]^{B_{\partial}}\ar[d]^{H^N((l_i^*=\Omega(l_i))_{1\leq i\leq n})} & \, & \\
H^N\iota^{\geq p}H^N\Gamma(B_{\bullet},(\Omega^{\bullet}_{\mathbb P^N}\otimes_{O_{\mathbb P^N}}F^{\bullet-p}j_*O_U)^{an})
\ar[rr]^{B_{\partial}}\ar[d]^{ev(U)} & \, & 
H^p\Gamma(B_{\bullet},\Omega^{N-p}_{\mathbb P^{N,an}}(\log X)^{\partial=0})\ar[d]^{ev(U)^{\bullet}_{\bullet}} \\
H^N_{\sing}(U^{an},\mathbb C)=H^N\Gamma(U^{an},C_{U^{an},\diff}^{\bullet}\otimes\mathbb C) & \, & 
H^p\Gamma(B_{\bullet}\backslash X^{an},C_{\diff}^{N-p,\partial=0}\otimes\mathbb C)
\ar[ll]^{BC_{\partial}\otimes\mathbb C}}
\end{equation*}
Since $ev(U)$ is injective by the strictness of the Hodge filtration and since $B_{\partial}$ is an isomorphism,
$ev(U)^{\bullet}_{\bullet}$ is injective and 
\begin{eqnarray*}
(BC_{\partial}\otimes\mathbb C)':=(BC_{\partial}\otimes\mathbb C)_{|ev(U)^{\bullet}_{\bullet}
(H^p\Gamma(B_{\bullet},\Omega^{N-p}_{\mathbb P^{N,an}}(\log X)^{\partial=0}))}: \\ 
ev(U)^{\bullet}_{\bullet}(H^p\Gamma(B_{\bullet},\Omega^{N-p}_{\mathbb P^{N,an}}(\log X)^{\partial=0}))
\to H^N_{\sing}(U^{an},\mathbb C)
\end{eqnarray*}
is injective. Recall that $H_N^{\sing}(U^{an},\mathbb Q)=H_NC^{\diff}_{\bullet}(B_{\bullet}\backslash X^{an},\mathbb Q)$ by Mayer-Vietoris property.
Then, assertion (D) become, using lemma \ref{OBac} : 
for each $\gamma=(\gamma_I)\in C^{diff}_{\bullet}(B_{\bullet}\backslash X,\mathbb Q)^{\partial+\partial_{\bullet}=0}$ such that $ev(U)(\xi)(\gamma)=0$ for all
\begin{equation*}
\xi=(\xi_I)\in\Gamma(B_{\bullet}\backslash X,\Omega^{p+1}_{\mathbb P^N,\log,0}\otimes\mathbb C)^{\partial_{\bullet}=0},
\end{equation*}
then $ev(U)(w)(\gamma)=0$ for all $w=(w_I)\in\Gamma(B_{\bullet},\Omega_{\mathbb P^N}^{p+1,\partial=0}(\log X))^{\partial_{\bullet}=0}$. 
But this follows from lemma \ref{Dlem}(ii). This proves the equality 
\begin{equation*}
F^{p+1}H^N(U^{an},\mathbb Q)=OL^{N}_{U^{an},0}(H_{usu}^p(U^{an},\Omega_{U^{an},\log,0}^{N-p})).
\end{equation*}

\end{proof}

\section{Hodge Conjecture for complex smooth projective hypersurfaces}

We now state and prove using the result of section 3 and section 4 the main result of this article :

\begin{thm}
Let $X=V(f)\subset\mathbb P^N_{\mathbb C}$ be a smooth projective hypersurface, with $N=2p+1$ odd.
Let $\lambda\in F^pH^{2p}(X^{an},\mathbb Q)$. Then there exists an algebraic cycle $Z\in\mathbb Z^p(X)$
such that $\lambda=[Z]$.
\end{thm}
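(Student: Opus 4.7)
The plan is to reduce the Hodge conjecture for $X$ to Proposition \ref{cGAGAlog}(ii) via the motivic residue $Res_{X,\mathbb P^N}$ on the open complement $U := \mathbb P^N \setminus X$. Since $\dim X = N-1 = 2p$ is even, Section~4 provides the decomposition
\begin{equation*}
H^{2p}(X^{an},\mathbb Q) = \mathbb Q[\Lambda \cap X] \oplus H^{2p}_{\sing,v}(X^{an},\mathbb Q),
\end{equation*}
compatible with the Hodge filtration, where $\Lambda \subset \mathbb P^N$ is a linear subspace of codimension $p$. Writing $\lambda = m[\Lambda \cap X] + \lambda_v$ with $\lambda_v \in F^p H^{2p}_v(X^{an},\mathbb Q)$, the first summand is already an algebraic cycle class (a $p$-fold hyperplane section), so it suffices to prove that $\lambda_v$ is algebraic.

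First, I lift $\lambda_v$ via the residue isomorphism $H^N \Bti(Res_{X,\mathbb P^N}): H^N(U^{an},\mathbb Q) \xrightarrow{\sim} H^{2p}_v(X^{an},\mathbb Q)$ of Section~4. By strict compatibility with Hodge filtrations, the lift $\alpha$ lies in $F^{p+1} H^N(U^{an},\mathbb Q)$ and is therefore a rational Hodge class on the open variety $U$. Proposition \ref{HUQ}, the main analytic input of Section~4, then shows that such a Hodge class on $U$ is automatically an analytic logarithmic De Rham class:
\begin{equation*}
\alpha \in OL^N_{U^{an},0}(H^p_{usu}(U^{an},\Omega^{N-p}_{U^{an},\log,0})).
\end{equation*}

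The key transfer step is to carry this logarithmicity across $Res_{X,\mathbb P^N}$. By Definition \ref{ResMot}, $Res_{X,\mathbb P^N}$ is constructed motivically in $\DA(\mathbb C)$ from the motivic purity isomorphism $P_{X,\mathbb P^N}$ of Proposition \ref{smXZ} and the boundary $c(\mathbb Z(U))$. Since the sheaves $\Omega^{l,an}_{\log,0}$ admit transfers compatible with those on $\Omega^{l,an}$ by Lemma \ref{a1trcan}, and since the motivic purity isomorphism preserves $OL_{-,0}$-classes (as already used in the proof of Proposition \ref{cGAGAlog}), the image $\lambda_v = H^{2p}\Omega(Res_{X,\mathbb P^N})(\alpha)$ lies in $H^{2p} OL_{X^{an},0}(H^p_{usu}(X^{an},\Omega^p_{X^{an},\log,0}))$. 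Combining with Proposition \ref{cGAGAlog}(i) applied to the linear section $[\Lambda \cap X]$, we deduce $\lambda \in H^{2p} OL_{X^{an},0}(H^p_{usu}(X^{an},\Omega^p_{X^{an},\log,0}))$. Since $2d = 2p = \dim X$ satisfies the hypothesis $2d \geq \dim X$ of Proposition \ref{cGAGAlog}(ii), we conclude $\lambda = [Z]$ for some $Z \in \mathcal Z^p(X)$. The main obstacle is the motivicity step: one must verify carefully that the analytic logarithmic sheaves $\Omega^{\bullet,an}_{\log,0}$ on $X^{an}$ and on $U^{an}$ behave functorially under the motivic residue, which ultimately rests on the transfer structure of Lemma \ref{a1trcan} together with the motivic character of the purity isomorphism of Proposition \ref{smXZ}.
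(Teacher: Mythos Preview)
Your proposal is correct and follows essentially the same route as the paper: decompose $\lambda$ into a hyperplane section class plus a primitive part, lift the primitive part to $F^{p+1}H^N(U^{an},\mathbb Q)$ via the residue isomorphism, apply Proposition~\ref{HUQ} to obtain logarithmicity on $U$, push this back to $X$ through the motivic residue of Definition~\ref{ResMot}, and conclude by Proposition~\ref{cGAGAlog}(ii). One small imprecision: the transfer of logarithmicity along $Res_{X,\mathbb P^N}$ in the paper rests not on the transfer structure of Lemma~\ref{a1trcan}(i) but on the $\mathbb A^1$-invariance of $a_{usu}\Omega^{l,an,ne}_{\log,0}$ (Lemma~\ref{a1trcan}(ii)'), together with Proposition~\ref{UXusu}(i)',(i)'' and Lemma~\ref{cGAGAloglem}, exactly as in the proof of Proposition~\ref{cGAGAlog}(ii); this is what makes the purity isomorphism of Proposition~\ref{smXZ} act compatibly on the logarithmic classes.
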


\begin{proof}
Let $U:=\mathbb P^N_{\mathbb C}\backslash X$. 
Consider $Res_{X,\mathbb P^N}:M(X)\to M(U)[-1]$ the canonical map given in definition \ref{ResMot}. 
We have the commuative diagram of complex vector spaces : 
\begin{equation*}
\xymatrix{
H_{DR}^N(U)\ar[rrrr]^{H^N\Omega(Res_{X,\mathbb P^N})} & \, & \, & \, & H_{DR}^{N-1}(X)_v \\
H_{\sing}^N(U^{an},\mathbb C)
\ar[u]^{H^N\alpha(U^{an})=ev(U)^{-1}}\ar[rrrr]^{(H^N\Bti(Res_{X,\mathbb P^N}))\otimes_{\mathbb Q}\mathbb C} 
& \, & \, & \, & H_{\sing}^{N-1}(X^{an},\mathbb C)_v\ar[u]_{H^{N-1}\alpha(X^{an})=ev(X)^{-1}}}.
\end{equation*}
whose arrows are isomorphisms. Denote $w(\lambda):=H^{N-1}\alpha(X^{an})(\lambda)\in F^pH^{2p}_{DR}(X)$.
We have 
\begin{equation*}
\lambda=H^{N-1}\Bti(Res_{X,\mathbb P^N})(\alpha)+m[H], \; 
\mbox{with} \; m\in\mathbb Q, \; \alpha\in H_{\sing}^N(U^{an},\mathbb Q), 
\end{equation*}
where $H:=X\cap\Lambda\subset X$ is an hyperplane section. Then 
\begin{equation*}
w(\lambda)=H^{N-1}\Omega(Res_{X,\mathbb P^N})(w(\alpha))+m[H], \; \mbox{with} \;
w(\alpha):=H^N\alpha(U^{an})(\alpha)\in F^{p+1}H^N_{DR}(U).
\end{equation*}
Since $ev(U)(w(\alpha))=\alpha\in H_{\sing}^N(U^{an},2i\pi\mathbb Q)$, we have by proposition \ref{HUQ}, 
\begin{equation*}
w(\alpha)=H^NOL_{U^{an},0}(w(\alpha))\in H^NOL_{U^{an},0}(H_{usu}^p(U^{an},\Omega^{N-p}_{U^{an},\log,0}))
\end{equation*}
Hence, as in the proof of proposition \ref{cGAGAlog}(ii), we have by lemma \ref{a1trcan}(ii)', proposition \ref{UXusu} (i)' and (i)'', and lemma \ref{cGAGAloglem},
\begin{equation*}
w(\lambda)=H^{N-1}OL_{X^{an},0}(\Omega_{\log,0}^{N-p-1,an}(Res_{X,\mathbb P^N})(w(\alpha)))+m[H]\in 
OL_{X^{an},0}(H_{usu}^p(X^{an},\Omega^p_{X^{an},\log,0})). 
\end{equation*}
Hence by proposition \ref{cGAGAlog}(ii), $w(\lambda)=[Z]$ with $Z\in\mathcal Z^p(X)$. 
\end{proof}


LAGA UMR CNRS 7539 \\
Universit\'e Paris 13, Sorbonne Paris-Nord, 99 av Jean-Baptiste Clement, \\
93430 Villetaneuse, France, \\
bouali@math.univ-paris13.fr

\end{document}